\documentclass[12pt,a4paper]{amsart}
\usepackage{a4wide}
\usepackage{amsmath, amssymb, amsfonts,enumerate}
\usepackage[all]{xy}
\usepackage{amscd}
\usepackage{comment}
\usepackage{mathtools}

\newtheorem{theorem}{Theorem}[section]
\newtheorem{lemma}[theorem]{Lemma}
\newtheorem{corollary}[theorem]{Corollary}
\newtheorem{proposition}[theorem]{Proposition}

 \theoremstyle{definition}
 \newtheorem{definition}[theorem]{Definition}
 \newtheorem{remark}[theorem]{Remark}

 \newtheorem{example}[theorem]{Example}
\newtheorem{examples}[theorem]{Examples}

\numberwithin{equation}{section}
\newcommand {\N}{\mathbb{N}} 
\newcommand {\Z}{\mathbb{Z}} 
\newcommand {\R}{\mathbb{R}} 

\newcommand{\T}{\mathbb{T}}

\newcommand{\CC}{\mathcal{C}}
\newcommand{\DD}{\mathcal{D}}

\newcommand{\FF}{\mathcal{F}}
\newcommand{\GG}{\mathcal{G}}

\newcommand{\PP}{\mathcal{P}}

\newcommand{\UU}{\mathcal{U}}
\newcommand{\VV}{\mathcal{V}}

   \DeclareMathOperator{\GL}{GL}
      \DeclareMathOperator{\SL}{SL}

\DeclareMathOperator{\mdim}{mdim}

\DeclareMathOperator{\LCA}{LCA}

\DeclareMathOperator{\End}{End}

\DeclareMathOperator{\M}{Mat}

\DeclareMathOperator{\Id}{Id}

\DeclareMathOperator{\supp}{supp}
\DeclareMathOperator{\Spec}{Spec}
\newcommand{\Proj}{\mathbb{P}}

\makeindex

\begin{document}
\title{The Garden of Eden theorem: old and new}
\author{Tullio Ceccherini-Silberstein}
\address{Universit\`a del Sannio, 82100 Benevento, Italy}
\email{tullio.cs@sbai.uniroma1.it}
\author{Michel Coornaert}
\address{Universit\'e de Strasbourg, CNRS, IRMA UMR 7501, F-67000 Strasbourg, France}
\email{michel.coornaert@math.unistra.fr}
\subjclass[2010]{37B15, 37B10, 37B40, 37C29, 37D20,  
43A07, 68Q80}
\keywords{Garden of Eden theorem, cellular automaton, mutually erasable patterns, dynamical system, homoclinicity, pre-injectivity, surjunctivity, amenability, soficity}
\begin{abstract}
We review topics in the theory of cellular automata and dynamical systems that are related to the Moore-Myhill Garden of Eden theorem.
\end{abstract}
\date{\today}
\maketitle

\tableofcontents

\section{Introduction}

In the beginning, the Garden of Eden theorem, also known as the \emph{Moore-Myhill theorem}, is a result in the theory of cellular automata which states that a cellular automaton is surjective if and only if it satisfies a weak form of injectivity, called pre-injectivity.
The theorem was obtained by Moore\index{Moore, Edward F.} and Myhill\index{Myhill, John R.} in the early 1960s
 for cellular automata with finite alphabet over the groups $\Z^d$. 
The fact that surjectivity implies pre-injectivity for such cellular automata was first proved by Moore in~\cite{moore}, and  
 Myhill~\cite{myhill} established the converse implication shortly after.
 The proofs of Moore and Myhill appeared in two separate papers 
both published in 1963.
The biblical terminology used to designate the  Moore-Myhill theorem comes from the fact that configurations that are not in the image of a cellular automaton are called 
\emph{Garden of Eden configurations} because, when considering the sequence of consecutive iterates of the cellular automaton applied to the set of configurations,
they can only occur at time $0$.
Surjectivity of a cellular automaton is equivalent to absence of Garden of Eden configurations. 
In 1988~\cite[Question~1]{schupp-arrays}, Schupp\index{Schupp, Paul E.} asked whether the class of  groups for which the Garden of Eden theorem remains  valid is precisely the class of  virtually nilpotent groups.
By a celebrated result of Gromov~\cite{gromov-polynomial-growth},\index{Gromov, Misha L.}
a finitely generated group is virtually nilpotent 
if and only if it has polynomial growth.
In~1993,  Mach{\`{\i}} and Mignosi~\cite{machi-mignosi}
proved that the Garden of Eden theorem is still valid over any finitely generated group with subexponential growth.
As Grigorchuk~\cite{grigorchuk-1984},\index{Grigorchuk, Rostislav I.}
answering a longstanding open question raised by Milnor~\cite{milnor-growth},\index{Milnor, John W.}
gave   examples  of   groups whose growth lies strictly  between polynomial and exponential,  
it follows that the class of finitely generated  groups satisfying the Garden of Eden theorem is  larger than the class of finitely generated  virtually nilpotent groups.
Actually, it is  even  larger than the class of finitely generated groups with subexponential growth.
Indeed, Mach{\`{\i}},  Scarabotti, and the first author~\cite{ceccherini} proved in~1999 
that every  amenable group satisfies the Garden of Eden theorem
and it is a well--known fact that there are finitely generated amenable groups, such as the solvable Baumslag-Solitar group $BS(1,2) = \langle a,b:aba^{-1} = b^2\rangle$, that are amenable and have  exponential growth.
It was finally shown that the class of groups that satisfy the Garden of Eden theorem 
is precisely the class of amenable groups.
This is a consequence of recent results of Bartholdi~\cite{bartholdi}, Bartholdi\index{Bartholdi, Laurent} 
and Kielak~\cite{bartholdi:2016},
who showed that none of the implications of the Garden of Eden theorem
holds  when the group is nonamenable.
\par
In~\cite[Section~8]{gromov-esav},
Gromov made an important contribution to the subject 
by providing a deep analysis of the role played by entropy in the proof of the Garden of Eden theorem
and indicating new  directions for extending it  in many other interesting settings.
He mentioned  in particular
\cite[p~195]{gromov-esav}
the possibility of proving an analogue of the Garden of Eden theorem for a suitable class of hyperbolic dynamical systems.
Some results in that direction were subsequently obtained by the authors in~\cite{csc-goe-anosov},
\cite{csc-ijm-expansive}, \cite{csc-goe-principal} and, in collaboration with H.\ Li,\index{Li, Hanfeng} \cite{cscl-goe-homoclinically}.
In particular, a version of the Garden of Eden theorem was established    
for Anosov diffeomorphisms on tori in~\cite{csc-goe-anosov} and for principal expansive 
algebraic actions of countable abelian groups in~\cite{csc-goe-principal}.
\par
The present article is intended as a reasonably self-contained survey on the classical Garden of Eden theorem and some of its generalizations.    Almost all results presented here have already appeared in the literature elsewhere but  we sometimes give complete proofs when we feel they might be  helpful
to the reader.
The general theory of cellular automata over groups is developed in our monograph~\cite{book}. The present survey is a kind of complement to our book since for instance cellular automata between subshifts are not considered in \cite{book} while they are treated  here. 
 \par
The paper is organized as follows.
Configuration spaces and shifts are presented in Section~\ref{sec:configur-shifts}.
Cellular automata are introduced in Section~\ref{sec:ca}.
Section~\ref{sec:goe-Zd} contains the proof of the Garden of Eden theorem in the case $G = \Z^d$
following Moore and Myhill.
The proof of the Garden of Eden theorem in the  case of an arbitrary countable amenable group is given in Section~\ref{sec:goe-amenable}.
Examples of cellular automata that do not satisfy the Garden of Eden theorem for groups containing nonabelian free subgroups are described in Section~\ref{sec:goe-fails-non-amen}.
We also discuss the results of Bartholdi and Kielak mentioned above, which, together with the Garden of Eden theorem,
lead to characterizations of amenability in terms of cellular automata.
Extensions of  the Garden of Eden theorem to certain classes of   subshifts are
reviewed in Section~\ref{sec:goe-subshifts}.
In Section~\ref{sec:goe-dyn-sys},
we present versions of
the Garden of Eden theorem we obtained for certain classes of dynamical systems.
The final section briefly discusses some additional topics and provides references for further readings.    

\section{Configuration spaces and shifts}
\label{sec:configur-shifts}

\subsection{Notation}
We use the symbol $\Z$ to denote the set of integers $\{\dots,-2,-1,0,1,2,\dots\}$.
The symbol  $\N$  denotes the set of nonnegative integers $\{0,1,2,\dots\}$.
The cardinality of a finite set $X$ is written $|X|$.
\par
We  use multiplicative notation for groups  except for abelian groups such as 
\[
\Z^d = \underbrace{\Z \times \Z \times \cdots \times \Z}_{\footnotesize\mbox{$d$ times}}
\] 
for which we generally prefer additive notation.
\par 
Let $G$  be a group. We denote the identity element of $G$ by $1_G$.
If $A, B$ are subsets of $G$ and $g \in G$, we write
$A B \coloneqq \{ a b : a \in A,b \in B\}$,
$A^{-1} \coloneqq \{a^{-1} : a \in A\}$, $g A \coloneqq \{g\} A$ and $A g \coloneqq A \{g\}$. A subset $A \subset G$ is said to be \emph{symmetric}\index{symmetric subset} if it satisfies $A = A^{-1}$.

\subsection{Configurations spaces}
Let $\UU$ be a countable set, called the \emph{universe},\index{universe} and 
$A$  a finite set, called the \emph{alphabet}.\index{alphabet}
Depending on the context, the elements of $A$ are called  
\emph{letters}, or   \emph{symbols}, or \emph{states}, or \emph{colors}.  
As usual, we denote by $A^\UU$ the set consisting of all maps $x \colon \UU \to A$.
An element of $A^\UU$ is called a \emph{configuration}\index{configuration} of the universe $\UU$.
Thus, a configuration is a way of attaching a letter of the alphabet to each element of the universe.
\par
If $x \in A^\UU$ is a configuration and $\VV \subset \UU$, 
we shall write $x\vert_\VV$ for the \emph{restriction} of $x$ to $\VV$, i.e., the element 
$x|_\VV \in A^\VV$ defined by $x|_\VV(v) = x(v)$ for all $v \in \VV$. 
If $X \subset A^\UU$,
we shall write
\begin{equation}
\label{e:x-VV}
X_\VV \coloneqq \{ x\vert_\VV : x \in X \} \subset A^\VV.
\end{equation}
\par
Two configurations $x,y \in A^\UU$ are said to be \emph{almost equal}\index{almost equal configurations} if they coincide outside of a finite set, i.e., there is a finite subset
$\Omega \subset \UU$ such that $x|_{\UU \setminus \Omega} = y|_{\UU \setminus \Omega}$. 
Being almost equal clearly defines an equivalence relation on  $A^\UU$.
\par 
  We equip the configuration set $A^\UU$ with its  \emph{prodiscrete}\index{prodiscrete!--- topology}
  topology, that is,
the product topology obtained by taking the discrete topology on each factor
$A$ of $A^\UU = \prod_{u \in \UU} A$.
A neighborhood base of a configuration $x \in A^\UU$ is given by the sets
\begin{equation}
\label{e;cilinders}
V(x,\Omega) = V(x,\Omega,\UU,A) \coloneqq \{ y \in A^\UU : x\vert_\Omega = y\vert_\Omega \}, 
\end{equation}
where $\Omega$ runs over all finite subsets of $\UU$.
In this topology, two configurations  are ``close" if they coincide on a ``large" finite subset of  the universe. 
\par
Every finite discrete topological space is compact,    totally disconnected, and metrizable.
As a product of compact (resp.~totally disconnected) topological spaces is itself
compact (resp.~totally disconnected)
and a countable product of metrizable spaces is itself metrizable, 
it follows that $A^\UU$ is a compact totally disconnected metrizable space.
Note that $A^\UU$ is homeomorphic to the Cantor set as soon as $A$ contains more than one element and $\UU$ is infinite. 

\subsection{Group actions}
An \emph{action}\index{action} of a group $G$ on a set $X$ is a map
$\alpha \colon G \times X \to X$ satisfying
$\alpha(g_1,\alpha(g_2,x)) = \alpha(g_1 g_2,x)$ and $\alpha(1_G,x) = x$ for all $g_1,g_2 \in G$ and $x \in X$.
In the sequel, if $\alpha$ is an action of a group $G$ on a set $X$, we shall simply   write 
$g x$ instead of $\alpha(g,x)$, if there is no risk of confusion.
\par
Suppose that a group $G$ acts on a set $X$.
The \emph{orbit}\index{orbit} of a point $x \in X$ is the subset $G x \subset X$ defined by
$G x \coloneqq \{g x : g \in G\}$.
A point $x \in X$ is called \emph{periodic}\index{periodic point} if its orbit is finite.
A subset $Y \subset X$ is called \emph{invariant}\index{invariant subset}
if $G y \subset Y$ for all $y \in Y$.
One says that $Y \subset X$ is \emph{fixed}\index{fixed subset} by $G$ if $g y = y$ for all $g \in G$ and $y \in Y$.
\par 
Suppose now that a group $G$ acts on two sets $X$ and $Y$. A map 
$f \colon X \to Y$ is called \emph{equivariant}\index{equivariant map} 
if $f(g x) = g f(x)$ for all $g \in G$ and $x \in X$. 
\par
Let $X$ be a topological space. An action of a group $G$ on $X$ is called \emph{continuous}\index{continuous action}\index{action!continuous ---} if the map $x \mapsto g x$ is continuous on $X$ for each $g \in G$.
Note that if $G$ acts continuously on $X$ then, for each $g \in G$,  the map $x \mapsto g x$ is a homeomorphism of $X$ with inverse $x \mapsto g^{-1} x$. 

\subsection{Shifts}
From now on, our universe will be a countable group.
So let $G$ be a countable group and $A$ a finite set.
Given an element $g \in G$  and a configuration $x \in A^G$, we define
the configuration $gx \in A^G$ by
\begin{equation*}
\label{e:decalagge}
  gx \coloneqq x \circ L_{g^{-1}},
\end{equation*}
where $L_g \colon G \to G$ is the left-multiplication by $g$.
Thus
$$
gx (h) =   x (g^{-1}h) \quad \text{for all } h \in G.
$$
Observe that, for all $g_1, g_2 \in G$ and $x \in A^G$,
\begin{equation*}
g_1(g_2x) =   x \circ L_{g_2^{-1}} \circ L_{g_1^{-1}} = x \circ L_{g_2^{-1}g_1^{-1}} = x \circ L_{(g_1g_2)^{-1}} 
= (g_1g_2)x,
\end{equation*}
and 
$$
1_G x = x \circ L_{1_G} = x \circ \Id_G = x.
$$
Therefore the map
\begin{align*}
G \times A^G &\to A^G \\
(g,x) &\mapsto gx 
\end{align*}
defines an action of $G$ on $A^G$. 
This action is called the $G$-\emph{shift}, or simply the \emph{shift},\index{shift}  on 
 $A^G$.
\par
Observe that if two configurations $x,y \in A^G$ coincide on a subset $\Omega \subset G$, 
then, for every $g \in G$,
the configurations $g x$ and $g y$ coincide on $g \Omega$.
As the sets $V(x,\Omega)$ defined by~\eqref{e;cilinders} 
    form a base of neighborhoods of $x \in A^G$ when $\Omega$ runs over all finite subsets of $G$, 
    we deduce that
the map $x \mapsto g x$ is continuous  on $A^G$ for each 
$g \in G$. Thus, the shift action of $G$ on $A^G$ is continuous. 

\subsection{Patterns}
A \emph{pattern}\index{pattern} is a map $p \colon \Omega \to A$, where $\Omega$ is a finite subset of $G$.
If $p \colon \Omega \to A$ is a pattern,
we say that $\Omega$ is the \emph{support}\index{support!--- of a pattern}\index{pattern!support of a ---} of $p$ and write $\Omega = \supp(p)$.
\par
Let $\PP(G,A)$ denote the set of all patterns.
There is a natural  action of the group $G$ on $\PP(G,A)$ defined as follows.
Given $g \in G$ and a pattern $p \in \PP(G,A)$ with support $\Omega$,
we define the pattern $g p \in \PP(G,A)$ as being the pattern with support $g \Omega$ such that
$g p(h) = p(g^{-1} h)$ for all $h \in g \Omega$.
It is easy to check that this defines  an  action of $G$ on $\PP(G,A)$, i.e.,
$g_1(g_2 p) = (g_1 g_2) p$ and $1_G p = p$ for all $g_1, g_2 \in G$ and $p \in \PP(G,A)$.
Observe that
$\supp(g p) = g \supp(p)$ for all $g \in G$ and $p \in \PP(G,A)$.
Note also that if $p$ is the restriction of a configuration $x \in A^G$ to a finite subset 
$\Omega \subset G$,
then $g p$ is the restriction of the configuration $g x$ to $g \Omega$.

\begin{example}
\label{ex:patterns-are-words}
Take $G = \Z$  and let $A$ be a finite set.
Denote by $A^\star$ the set of words on the alphabet $A$.\index{word}
We recall that $A^\star$ is the free monoid based on $A$ and that any element $w \in A^\star$ can be uniquely written in the form $w = a_1 a_2 \cdots a_n$, where $a_i \in A$ for $1 \leq i \leq n$ and   $n \in \N$ is the \emph{length}\index{length!--- of a word}\index{word!length of a ---} of the word $w$. The monoid operation on $A^\star$ is the concatenation of words and the identity element  is the empty word, that is, the unique word with length $0$.
Now let us fix some finite interval $\Omega \subset \Z$  of cardinality $n$, say
$\Omega = \{m , m + 1, \dots, m + n - 1\}$ with $m \in \Z$ and $n \in \N$.
Then one can associate with each pattern $p \colon \Omega \to A$ 
the word 
\[
w = p(m) p(m + 1) \cdots p(m + n - 1) \in A^\star.
\]
This yields a one-to-one correspondence between the patterns supported by $\Omega$ and the words of length $n$ on the alphabet $A$.
This is frequently used to identify each pattern supported by $\Omega$ with the corresponding word.
\end{example}

\subsection{Subshifts}
A \emph{subshift}\index{subshift} is a subset 
$X \subset A^G$ that is invariant under the $G$-shift and closed for the prodiscrete topology on $A^G$.

\begin{example}
\label{ex:golden-sub}
Take $G = \Z$ and $A = \{0,1\}$.
Then the  subset $X \subset A^G$, consisting of all $x \colon \Z \to \{0,1\}$ such that
$(x(n),x(n + 1)) \not= (1,1)$ for all $n \in \Z$, is a subshift.
This subshift is called the \emph{golden mean subshift}.\index{golden mean subshift}\index{subshift!golden mean ---}
\end{example}

\begin{example}
\label{ex:hard-ball}
Take $G =\Z^d$ and $A = \{0,1\}$.
Then the  subset $X \subset A^G$, consisting of all $x \in A^G$ such that
$(x(g),x(g + e_i)) \not= (1,1)$ for all $g \in G$, where $(e_i)_{1\leq i \leq d}$ is the canonical basis of $\Z^d$,
is a subshift.
This subshift is called the \emph{hard-ball model}.\index{hard-ball model}\index{subshift!hard-ball model ---}
For $d = 1$, the hard-ball model is  the golden mean subshift of the previous example. 
\end{example}

\begin{example}
\label{ex:even-subshift}
Take $G = \Z$ and $A = \{0,1\}$.
Then the subset $X \subset A^G$, consisting of all bi-infinite sequences $x \colon \Z \to \{0,1\}$ such that there is always an even number of $0$s between two $1$s, is a subshift.
This subshift is called the \emph{even subshift}.\index{even subshift}\index{subshift!even ---}
\end{example}

\begin{example}
\label{ex:ledrappier-sub}
Take $G = \Z^2$ and $A = \{0,1\} = \Z/2\Z$ (the integers modulo $2$).
Then the  subset $X \subset A^G$, consisting of all $x \colon \Z^2 \to \{0,1\}$ such that
\[
x(m,n) + x(m + 1,n) + x(m,n + 1) = 0
\]
for all $(m,n) \in \Z^2$, is a subshift.
This subshift is called the \emph{Ledrappier subshift}.\index{Ledrappier subshift}\index{subshift!Ledrappier ---}
\end{example}

\begin{remark}
Every intersection  of subshifts and every finite union of subshifts $X \subset A^G$ is itself a subshift. Therefore the subshifts $X \subset A^G$ are the closed subsets of a topology on $A^G$. This topology is coarser (it has less open sets) than the prodiscrete topology on $A^G$.
It is not Hausdorff as soon as $G$ is not trivial and $A$ has more than one element. 
\end{remark}

Given a (possibly infinite) subset of patterns $P \subset \PP(G,A)$,
it is easy to see that the subset $X(P) \subset A^G$ defined by
$$
X(P) \coloneqq \{ x \in A^G \text{ such that  }  (g x)|_{\supp(p)} \not= p \text{ for all  } g \in G \text{ and } p \in P \}
$$
is a subshift.
\par
Conversely, let  $X \subset A^G$ be a subshift.
One says that a pattern $p \in \PP(G,A)$ 
\emph{appears}\index{pattern!--- appearing in a subshift}\index{subshift!pattern appearing in a ---} in $X$ if
$p \in X_{\supp(p)}$, i.e., if   there is a configuration $x \in X$
such that $x|_{\supp(p)} = p$.
Then one easily checks that $X = X(P)$ for
$$
P \coloneqq  \{ p \in \PP(G,A) \text{ such that  }   p \text{ does not appear in   }  X \}.
$$
\par
One says that a subshift $X \subset A^G$ is \emph{of finite type}\index{finite type!subshift of ---}\index{subshift!--- of finite type} if there exists a finite subset 
$P \subset \PP(G,A)$ such that $X = X(P)$.
The  hard-ball models (and hence in particular the golden mean subshift) and the Ledrappier subshift are examples of subshifts of finite type. On the other hand, the even subshift is not of finite type. 

\section{Cellular automata}
\label{sec:ca}

\subsection{Definition}

Let  $G$ be a countable group and let $A, B$ be  finite sets.
Suppose that $X \subset A^G$ and $Y \subset B^G$ are two subshifts.

\begin{definition}
\label{def:ca}
One says that a map $\tau \colon X \to Y$ is a
\emph{cellular automaton}\index{cellular automaton}
if there exist a finite subset $S \subset G$
and a map $\mu \colon A^S \to B$ such that
\begin{equation}
\label{def:automate}
 \tau(x)(g) = \mu((g^{-1}x)\vert_S)
\end{equation}
for all $x \in X$ and $g \in G$, where we recall that $(g^{-1}x)\vert_S$ denotes the restriction of the configuration $g^{-1}x \in X$ to $S$.
 Such a set $S$ is called a \emph{memory set}\index{memory set}\index{cellular automaton!memory set of a ---}
 and $\mu$ is called a \emph{local defining map}\index{local defining map}\index{cellular automaton!local defining map of a ---}
 for  $\tau$.
\end{definition}

It immediately follows from this definition that a map $\tau \colon X \to Y$  
is a cellular automaton
if and only if $\tau$ extends to a cellular automaton $\widetilde{\tau} \colon A^G \to B^G$.
Observe also that if $S$ is a memory set for a cellular automaton $\tau \colon X \to Y$ and $g \in G$,
then Formula~\eqref{def:automate} implies that the value taken by the configuration $\tau(x)$ at $g$ only depends on the restriction of $x$ to $gS$.
 Finally note that 
if $S$ is a memory set for a cellular automaton $\tau$, then any finite subset of $G$ containing $S$ is also a memory set for $\tau$.
Consequently, the memory set of a cellular automaton is not unique in general.
However, every cellular automaton admits a unique memory set with minimal cardinality
(this follows from the fact that if $S_1$ and $S_2$ are memory sets then so is $S_1 \cap S_2$).

\begin{example}
\label{ex:ca-sum-mod-2}
Take $G = \Z$ and $A = \{0,1\} = \Z/2\Z$.
Then the map $\tau \colon A^G \to A^G$, defined by
$$
\tau(x)(n) \coloneqq x(n + 1) + x(n)
$$
for all $x \in A^G$ and $n \in \Z$, is a cellular automaton admitting
$S \coloneqq \{0,1\} \subset \Z$ as a memory set and $\mu \colon A^S \to A$ given by
$$
\mu(p) \coloneqq  p(0) + p(1)
$$
for all $p \in A^S$, as a local defining map.
Using the representation of patterns with support $S$ by words of length $2$ on the alphabet $A$ (cf.~Example~\ref{ex:patterns-are-words}), the map $\mu$ is given by
\[
\mu(00) = \mu(11) = 0 \text{ and }   \mu(01) =  \mu(10) = 1. 
\]
\end{example}

\begin{example}[Majority vote]\index{majority vote}\index{cellular automaton!majority vote ---}
\label{ex:majority-ca}
Take $G = \Z$ and $A = \{0,1\}$.
Then the map $\tau \colon A^G \to A^G$, defined by
\[
\tau(x)(n) \coloneqq
\begin{cases} 
0 & \text{ if } x(n-1) + x(n) + x(n + 1) \leq 1 \\
1 & \text{ otherwise}
\end{cases}
\]
for all $x \in A^G$ and $n \in \Z$, is a cellular automaton admitting
$S \coloneqq \{-1,0,1\} \subset \Z$ as a memory set and $\mu \colon A^S \to A$ given by
\[
\mu(000) = \mu(001) = \mu(010) = \mu(100) = 0
\]
 and
 \[ 
\mu(011) = \mu(101) = \mu(110) = \mu(111) = 1
\]
as local defining map. The cellular automaton $\tau$ is called the \emph{majority vote} cellular automaton.
\end{example}

\begin{remark}
A cellular automaton $\tau \colon A^G \to A^G$, where $G = \Z$, $A = \{0,1\}$, admitting $S = \{-1,0,1\}$ as a memory set  is called an \emph{elementary cellular automaton}.\index{elementary!--- cellular automaton}\index{cellular automaton!elementary ---}
Each one of these  cellular automata is uniquely determined by its local defining map
$\mu \colon A^S \to A$, so that there are exactly $2^8 = 256$
elementary cellular automata.
They are numbered from $0$ to $255$ according to a notation that was introduced by Wolfram (cf.~\cite{wolfram-new-kind}).\index{Wolfram, Stephen}
To obtain the number $n$ of an elementary cellular automaton $\tau$, one proceeds as follows.
One first lists all eight patterns $p \in A^S$ in increasing order from $000$ to $111$.
The number $n$ is the integer whose expansion in base $2$ is $a_8 a_7 \dots a_1$, where $a_k$ is the value taken by the local defining map of $\tau$ at the $k$-th pattern in the list.
One also says that $\tau$ is  \emph{Rule} $n$.\index{Rule of an elementary cellular automaton}\index{elementary!Rule of an --- cellular automaton}
For instance, the elementary cellular automaton described in Example~\ref{ex:ca-sum-mod-2}
is Rule~102 while the one described in Example~\ref{ex:majority-ca} is Rule 232.\index{majority vote}\index{cellular automaton!majority vote ---}
\end{remark}

\begin{example}
\label{ex:identity-ca}
Let $G$ be a countable group, $A$ a finite set, and $X \subset A^G$ a subshift.
Then the identity map $\Id_X \colon X \to X$ is a cellular automaton with memory set
$S = \{1_G\}$ and local defining map $\mu = \Id_A  \colon A^S = A^{\{1_G\}} = A \to A$.
\end{example}

\begin{example}
Let $G$ be a countable group, $A$ a finite set, and $X \subset A^G$ a subshift.
Let $s \in G$ and denote by $R_s$ the right-multiplication by $s$, that is, the map
$R_s \colon G \to G$ defined by $R_s(h) \coloneqq h s$ for all $s \in G$.
Then the subset $Y \subset A^G$ defined by
$$
Y \coloneqq \{x \circ R_s \text{ such that } x \in X \}
$$
is a subshift.
Moreover, the map $\tau \colon  X \to Y$, defined by
$\tau(x) \coloneqq x \circ R_s$ for all $x \in X$,
is a cellular automaton with memory set $S = \{s\}$ and local defining map
$\mu = \Id_A \colon A^S = A \to A$.
Observe that if $s$ is in the center  of $G$, then $X =  Y$ and $\tau \colon X \to X$ is the shift map 
$x \mapsto s^{-1} x$. 
\end{example}

\begin{example}
\label{ex:ca-golden-even}
Take $G = \Z$ and  $A = \{0,1\}$.
Let $X \subset A^G$ and $Y \subset A^G$ denote respectively the golden mean subshift\index{golden mean subshift}\index{subshift!golden mean ---} and the even subshift.\index{even subshift}\index{subshift!even ---}
For $x \in X$, define $\tau(x) \in A^G$ by
$$
\tau(x)(n) \coloneqq
\begin{cases}
0 & \text{ if } (x(n),x(n + 1)) = (0,1) \text{ or } (1,0) \\
1& \text{ if } (x(n),x(n+1)) = (0,0)
\end{cases}
$$
for all $n \in \Z$. It is easy to see that $\tau(x) \in Y$ for all $x \in X$.
The map $\tau \colon X \to Y$ is a cellular automaton admitting $S \coloneqq \{0,1\} \subset \Z$ as a memory set and the map
$\mu \colon A^S  \to A$, defined by
\[
\mu(00) = \mu(11) = 1 \text{ and } \mu(01) = \mu(10) = 0.
\]
Note that the map $\mu' \colon A^S \to A$, defined by
\[
\mu'(00) = 1 \text{ and } \mu'(01) = \mu'(10) = \mu'(11) =  0
\]
is also a local defining map for $\tau$.
Thus, $\tau$ is the restriction to $X$ of both Rule $153$ and Rule $17$.
\end{example}

\subsection{The Curtis-Hedlund-Lyndon theorem}
The definition of a cellular automaton given in the previous subsection  is a local one. For $\tau$ to be a cellular automaton,
it requires the existence of a rule, commuting with the shift, that 
allows one to evaluate the value taken by 
$\tau(x)$ at  $g \in G$ by applying the rule to the restriction of $x$ to a certain finite set, namely  
the left-translate by $g$ of a memory set of the automaton.
The following result, known as the \emph{Curtis-Lyndon-Hedlund theorem} (see~\cite{hedlund}),  yields a global characterization of cellular automata involving only the shift actions and the prodiscrete topology on the configuration spaces.

\begin{theorem}
\label{t:curtis-hedlund-lyndon}\index{Curtis-Hedlund-Lyndon theorem}\index{theorem!Curtis-Hedlund-Lyndon ---}
Let  $G$ be a countable group and let $A,B$ be finite sets.
Let $\tau \colon X \to Y$ be a map from a subshift $X \subset A^G$ into 
a subshift $Y \subset B^G$. 
  Then the following conditions are equivalent:
\begin{enumerate}[\rm (a)]
\item
$\tau$ is a cellular automaton;
\item
$\tau$ is equivariant (with respect to the shift actions of $G$) and continuous (with respect to the prodiscrete topologies).   
\end{enumerate}
\end{theorem}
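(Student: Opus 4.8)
The plan is to prove the two implications separately. The direction $(a) \Rightarrow (b)$ is the routine one. Assuming $\tau$ is a cellular automaton with memory set $S$ and local defining map $\mu$, equivariance follows by a direct computation: I would verify $\tau(g x) = g \tau(x)$ by evaluating both sides at an arbitrary $h \in G$ using Formula~\eqref{def:automate} together with the identity $(h^{-1}(gx))|_S = ((g^{-1}h)^{-1}x)|_S$, which reduces both sides to $\mu(((g^{-1}h)^{-1}x)|_S)$. For continuity, I would use the observation already recorded in the text that $\tau(x)(g)$ depends only on $x|_{gS}$: given a basic neighborhood $V(\tau(x), \Omega)$ of $\tau(x)$, the configurations $y$ agreeing with $x$ on the finite set $\Omega S \coloneqq \{ \omega s : \omega \in \Omega, s \in S\}$ satisfy $\tau(y)|_\Omega = \tau(x)|_\Omega$, so $\tau(V(x,\Omega S)) \subset V(\tau(x),\Omega)$, giving continuity at $x$.

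The substantive direction is $(b) \Rightarrow (a)$. Suppose $\tau$ is continuous and equivariant. The idea is to recover a local rule from continuity by exploiting compactness. For each $x \in X$, continuity of $\tau$ at $x$ combined with the fact that a neighborhood base of $\tau(x)$ is given by the cylinders $V(\tau(x),\{1_G\})$ yields a finite subset $\Omega_x \subset G$ such that $\tau(y)(1_G) = \tau(x)(1_G)$ whenever $y|_{\Omega_x} = x|_{\Omega_x}$; in other words, the value $\tau(x)(1_G)$ depends only on $x|_{\Omega_x}$. The sets $V(x,\Omega_x)$ form an open cover of the compact space $X$, so I can extract a finite subcover corresponding to finitely many points $x_1,\dots,x_n$, and then take $S \coloneqq \bigcup_{i=1}^n \Omega_{x_i}$, a single finite subset of $G$. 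The key point to argue is that $\tau(x)(1_G)$ now depends only on $x|_S$ uniformly in $x$: if $x,y \in X$ satisfy $x|_S = y|_S$, then $x$ lies in some $V(x_i,\Omega_{x_i})$, whence $y$ does too since $\Omega_{x_i} \subset S$, and therefore $\tau(x)(1_G) = \tau(x_i)(1_G) = \tau(y)(1_G)$.

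Having produced such an $S$, I define the local map $\mu \colon X_S \to B$ by $\mu(x|_S) \coloneqq \tau(x)(1_G)$; the previous step shows this is well defined. One can extend $\mu$ arbitrarily from $X_S$ to all of $A^S$, which is harmless since Formula~\eqref{def:automate} is only required to hold for $x \in X$. It then remains to verify that the general formula $\tau(x)(g) = \mu((g^{-1}x)|_S)$ holds for all $g \in G$, and this is exactly where equivariance enters: I would write $\tau(x)(g) = (g^{-1}\tau(x))(1_G) = \tau(g^{-1}x)(1_G) = \mu((g^{-1}x)|_S)$, using continuity-at-$1_G$ in the form of $\mu$ in the last step and equivariance $g^{-1}\tau(x) = \tau(g^{-1}x)$ in the middle. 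This exhibits $\tau$ as a cellular automaton with memory set $S$ and local defining map $\mu$.

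I expect the main obstacle to be the compactness argument in the second implication, specifically the passage from the pointwise finite sets $\Omega_x$ to a single uniform memory set $S$. Continuity only gives a possibly different dependency set for each configuration, and the content of the theorem is that the compactness of $A^G$ (equivalently, of the closed subshift $X$) forces a uniform bound on the size of the relevant window. Care is needed in checking the well-definedness of $\mu$ on $X_S$ and in confirming that enlarging each $\Omega_{x_i}$ to the common $S$ does not disturb the equality $\tau(x)(1_G) = \tau(x_i)(1_G)$; both hinge on the monotonicity remark that any finite set containing a valid dependency set is again a valid dependency set.
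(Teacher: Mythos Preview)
Your proposal is correct and follows essentially the same route as the paper's own proof: both directions are argued the same way, with the key step in $(b)\Rightarrow(a)$ being the compactness argument that upgrades the pointwise dependency sets $\Omega_x$ to a single uniform memory set $S$, followed by the equivariance computation $\tau(x)(g)=(g^{-1}\tau(x))(1_G)=\tau(g^{-1}x)(1_G)=\mu((g^{-1}x)|_S)$. Your explicit remark about extending $\mu$ from $X_S$ to all of $A^S$ is a small clarification the paper leaves implicit, but otherwise the arguments coincide.
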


\begin{proof}
Suppose first that   $\tau \colon X \to Y$ is a cellular automaton.
Let $S \subset G$ be a memory set  and $\mu \colon A^S \to B$ a local defining map for $\tau$.
For all $g,h \in G$
and $x \in X$, we have that
\begin{align*}
\tau(gx)(h) &= \mu((h^{-1}gx)\vert_S) && \text{(by Formula~\eqref{def:automate})} \\
&= \mu(((g^{-1}h)^{-1}x)\vert_S) \\
&= \tau(x)(g^{-1}h) \\
&= g\tau(x)(h).
\end{align*}
Thus $\tau(gx) = g\tau(x)$ for all $g \in G$ and $x \in X$.
This shows that $\tau$ is equivariant.
\par
Now let $\Omega$ be a finite subset of $G$.
Recall that  Formula~\eqref{def:automate}
implies that if two configurations $x,  y \in X$ coincide on $gS$ for some $g \in G$,
then $\tau(x)(g) = \tau(y)(g)$.
Therefore, if the configurations $x$ and $y$ coincide on the finite set
$ \Omega  S$,
then $\tau(x)$ and $\tau(y)$ coincide on $\Omega$.
It follows that
$$
\tau(X \cap V(x,\Omega S,G,A)) \subset V(\tau(x),\Omega,G,B). 
$$
This implies that   $\tau$ is continuous.
Thus  (a) implies (b).
\par
Conversely, suppose now that the map  $\tau \colon X \to Y$ is equivariant and continuous.
 Let us show that $\tau$ is a cellular automaton.
 As the map $\varphi \colon X \to B$ defined by
$\varphi(x) \coloneqq \tau(x)(1_G)$ is continuous, we can find, for each $x \in X$, a finite subset 
$\Omega_x \subset G$
such that if $y \in X \cap V(x,\Omega_x,G,A)$, then $\tau(y)(1_G) =
\tau(x)(1_G)$. The sets
$X \cap V(x,\Omega_x,G,A)$ form an open cover of $X$. As $X$ is compact, there is a finite subset
$F \subset X$ such that the sets $V(x,\Omega_x,G,A)$, $x \in F$, cover $X$.
Let us set $S = \cup_{x \in F} \Omega_x$ and suppose that two configurations $y,z \in X$ coincide on $S$.
Let $x_0 \in F$ be such that $y \in V(x_0, \Omega_{x_0},G,A)$, that is, $y \vert_{\Omega_{x_0}} = x_0\vert_{\Omega_{x_0}}$.
As $\Omega_{x_0} \subset S$, we have that
$y \vert_{\Omega_{x_0}} = z \vert_{\Omega_{x_0}}$ and therefore  $\tau(y)(1_G) = \tau(x_0)(1_G) = \tau(z)(1_G)$.
We deduce that  there exists  a map $\mu \colon A^S \to B$ such that $\tau(x)(1_G) = \mu(x\vert_S)$ for all $x \in X$.
Now, for all $x \in X$ and  $g \in G$, we have that
\begin{align*}
\tau(x)(g) &= (g^{-1} \tau(x))(1_G) && \text{(by definition of the shift action on $B^G$)} \\
&= \tau(g^{-1}x)(1_G) &&\text{(since $\tau$ is equivariant)} \\
&= \mu((g^{-1} x)|_S).
\end{align*}
This shows that   $\tau$ is a cellular automaton
with memory set $S$ and local defining map $\mu$.
Thus (b) implies (a).
\end{proof}

\subsection{Operations on  cellular automata}

\begin{proposition}
\label{p:composition-ca}
Let $G$ be a countable group and let $A, B,C$ be finite sets.
Suppose  that $X \subset A^G$, $Y \subset B^G$, $Z \subset C^G$ are subshifts and that
$\tau \colon X \to Y$, $\sigma \colon Y \to Z$ are cellular automata.
Then the composite map $\sigma \circ \tau \colon X \to Z$ is a cellular automaton. 
\end{proposition}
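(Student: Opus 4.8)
The plan is to give two proofs, and I would describe the conceptual one first because it is essentially immediate. By the Curtis-Hedlund-Lyndon theorem (Theorem~\ref{t:curtis-hedlund-lyndon}), a map between subshifts is a cellular automaton precisely when it is equivariant and continuous. Since $\tau$ and $\sigma$ are cellular automata, each is equivariant and continuous; the composition of two equivariant maps is equivariant, and the composition of two continuous maps is continuous. Hence $\sigma \circ \tau$ is equivariant and continuous, and applying the theorem in the reverse direction shows it is a cellular automaton. This route has no real obstacle beyond invoking the characterization already established.

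For a survey it is more informative to exhibit an explicit memory set and local defining map, so my main effort would go into the direct construction. Let $S$ be a memory set and $\mu \colon A^S \to B$ a local defining map for $\tau$, and let $T$ be a memory set and $\nu \colon B^T \to C$ a local defining map for $\sigma$. I claim that $TS \coloneqq \{ts : t \in T, s \in S\}$, which is finite, is a memory set for $\sigma \circ \tau$. Starting from $(\sigma \circ \tau)(x)(g) = \nu((g^{-1}\tau(x))\vert_T)$ and using the equivariance of $\tau$ (which gives $g^{-1}\tau(x) = \tau(g^{-1}x)$), I would rewrite, for each $t \in T$, the value $\tau(g^{-1}x)(t) = \mu(((gt)^{-1}x)\vert_S)$, and observe that this depends only on the restriction of $x$ to $gtS$. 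Taking the union over $t \in T$ shows that $(\sigma \circ \tau)(x)(g)$ depends only on $x\vert_{g(TS)}$.

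To package this as a genuine local rule I would define $\kappa \colon A^{TS} \to C$ as follows: given a pattern $q \colon TS \to A$, for each $t \in T$ form the pattern $q_t \in A^S$ by $q_t(s) \coloneqq q(ts)$, set $r(t) \coloneqq \mu(q_t)$, and put $\kappa(q) \coloneqq \nu(r)$, where $r \in B^T$. A short verification, substituting $q = (g^{-1}x)\vert_{TS}$ and checking that $q_t(s) = x(gts)$ matches the values appearing above, then yields $(\sigma \circ \tau)(x)(g) = \kappa((g^{-1}x)\vert_{TS})$ for all $x \in X$ and $g \in G$, which is exactly Formula~\eqref{def:automate} for $\sigma \circ \tau$ with memory set $TS$ and local defining map $\kappa$.

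The only place demanding care is the bookkeeping of left translates and the composition of supports: one must track that the relevant window for computing $\tau(g^{-1}x)$ on $T$ is $\bigcup_{t \in T} gtS = g(TS)$ and that the inner local map $\mu$ is applied to the shifted subpatterns $q_t$ in the correct positions. Once the indexing is set up consistently, everything reduces to the defining formula; there is no analytic or combinatorial difficulty, only the need to keep the group multiplications in the right order.
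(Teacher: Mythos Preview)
Your first proof via the Curtis--Hedlund--Lyndon theorem is exactly the paper's argument, given there in a single sentence. Your second, explicit construction of the memory set $TS$ and local defining map $\kappa$ is correct and is additional material not present in the paper; it is a useful complement, though for the purposes of matching the paper the conceptual proof alone suffices.
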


\begin{proof}
This is an immediate consequence of the characterization of cellular automata 
given by the Curtis-Hedlund-Lyndon theorem
(cf.~Theorem~\ref{t:curtis-hedlund-lyndon}) since the composite of two equivariant (resp.~continuous) maps is itself equivariant (resp.~continuous).
\end{proof}

\begin{remark}
\label{rem:category-subshifts}
If we fix the countable group $G$, we deduce from Proposition~\ref{p:composition-ca} and Example~\ref{ex:identity-ca}
that the subshifts 
$X \subset A^G$, with $A$ finite,
are the objects of a concrete category $\CC_G$ in which  the set of morphisms from
 $X \in \CC_G$ to  $Y \in \CC_G$ consist of all cellular automata 
$\tau \colon X \to Y$ (cf.~\cite[Section~3.2]{csc-cat}). In this category, the endomorphisms of $X \in \CC_G$ consist of all cellular automata 
$\tau \colon X \to X$ and they form a monoid for the composition of maps. 
\end{remark}

\begin{proposition}
Let $G$ be a countable group and let $A, B$ be finite sets.
Suppose  that $X \subset A^G$, $Y \subset B^G$ are subshifts and that
$\tau \colon X \to Y$ is a bijective cellular automaton.
Then the inverse map 
$\tau^{-1} \colon Y \to X$ is a cellular automaton. 
\end{proposition}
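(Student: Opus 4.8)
The plan is to invoke the Curtis-Hedlund-Lyndon theorem (Theorem~\ref{t:curtis-hedlund-lyndon}) rather than to attempt to construct a memory set and local defining map for $\tau^{-1}$ directly. Since $X$ and $Y$ are subshifts, that theorem tells us that a map $Y \to X$ is a cellular automaton if and only if it is equivariant with respect to the shift actions and continuous for the prodiscrete topologies. So the whole task reduces to checking these two properties for $\tau^{-1}$.

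First I would verify equivariance, which is a direct computation. Given $y \in Y$ and $g \in G$, set $x \coloneqq \tau^{-1}(y)$, so that $y = \tau(x)$. Since $\tau$ is a cellular automaton, it is equivariant by Theorem~\ref{t:curtis-hedlund-lyndon}, whence $g y = g \tau(x) = \tau(g x)$. Applying $\tau^{-1}$ to both sides gives $\tau^{-1}(g y) = g x = g \tau^{-1}(y)$. As $y$ and $g$ were arbitrary, $\tau^{-1}$ is equivariant.

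The substantive point is continuity of $\tau^{-1}$, and the key observation is that this is a purely topological matter that requires no combinatorial analysis of the automaton. Indeed $\tau \colon X \to Y$ is, by hypothesis, a continuous bijection (it is continuous as a cellular automaton, again by Theorem~\ref{t:curtis-hedlund-lyndon}). The source $X$ is a closed subset of the compact space $A^G$, hence itself compact, while the target $Y$ is a subspace of the metrizable space $B^G$ and is therefore Hausdorff. A continuous bijection from a compact space onto a Hausdorff space is automatically a homeomorphism: it maps closed (hence compact) subsets of $X$ to compact, thus closed, subsets of $Y$, so it is a closed map, and a closed continuous bijection has continuous inverse. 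Consequently $\tau^{-1}$ is continuous.

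Having established that $\tau^{-1}$ is both equivariant and continuous, a final appeal to the implication (b) $\Rightarrow$ (a) of Theorem~\ref{t:curtis-hedlund-lyndon} concludes that $\tau^{-1} \colon Y \to X$ is a cellular automaton. The only place where one must be slightly careful is the compactness step: it relies on $X$ being closed in $A^G$, which holds precisely because $X$ is a subshift, and on $A$ being finite so that $A^G$ is compact. This is the real content of the argument, but it is routine once the right topological fact is invoked.
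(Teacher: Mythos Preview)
Your proof is correct and follows essentially the same approach as the paper: invoke the Curtis-Hedlund-Lyndon theorem, noting that the inverse of a bijective equivariant map is equivariant and that a continuous bijection from a compact space onto a Hausdorff space is a homeomorphism. You have simply spelled out in detail what the paper states in a single sentence.
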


\begin{proof}
This is again an immediate consequence of the  Curtis-Hedlund-Lyndon theorem
since the inverse  of a bijective  equivariant map is an  equivariant map
and the inverse of a bijective continuous map between compact Hausdorff spaces is  continuous. 
\end{proof}

\subsection{Surjectivity of cellular automata, GOE configurations, and   GOE patterns}
In what follows, we keep the notation introduced for defining cellular automata. 
Let $\tau \colon X \to Y$ be a cellular automaton.
A configuration $y \in Y$ is called a \emph{Garden of Eden}\index{Garden of Eden!--- configuration}\index{configuration!Garden of Eden ---} configuration for $\tau$,
briefly a \emph{GOE configuration}, if it does not belong to the image of $\tau$,
i.e., there is no $x \in X$ such that $y = \tau(x)$.
One says that a pattern $p \in \PP(G,B)$ is a \emph{Garden of Eden pattern}\index{Garden of Eden!--- pattern}\index{pattern!Garden of Eden ---} 
for $\tau$, briefly a \emph{GOE pattern},
 if the pattern $p$ appears in the subshift $Y$ but not  in the  subshift  $\tau(X)$, i.e., 
there exists $y \in Y$ such that $p = y|_{\supp(p)}$ but there is no $x \in X$ such that
 $p = \tau(x)|_{\supp(p)}$.
 Note that the set of GOE configurations (resp.~of GOE patterns) is an invariant subset of
 $Y$ (resp.~of $\PP(G,B)$). 
 Observe also that if $p \in \PP(G,B)$ is a GOE pattern for the cellular automaton $\tau \colon X \to Y$, 
then every configuration $y \in Y$ such that $y|_{\supp(p)} = p$ is a GOE configuration for $\tau$.

\begin{example}
It is easy to check that the pattern with support $\Omega \coloneqq \{0,1,2,3,4\}$ associated with the word $01001$ is a GOE pattern for the majority vote cellular automaton described in Example~\ref{ex:majority-ca}.\index{majority vote}\index{cellular automaton!majority vote ---}
\end{example}

\begin{proposition}
 \label{p:surj-equiv-no-goe}
  Let $\tau  \colon X \to Y$ be a cellular automaton.
 Then the  following conditions are  equivalent:
 \begin{enumerate}[\rm (a)]
 \item
 $\tau$ is  surjective;
 \item
 $\tau$ admits no GOE configurations;
 \item
 $\tau$ admits no GOE patterns.
\end{enumerate}
 \end{proposition}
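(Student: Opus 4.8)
The plan is to prove the three equivalences by first disposing of the purely definitional part and then extracting patterns from configurations by means of a compactness argument.

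First I would observe that (a) and (b) are essentially synonymous: by definition a GOE configuration is a point of $Y$ lying outside $\tau(X)$, so saying that \emph{$\tau$ admits no GOE configuration} means precisely that $\tau(X) = Y$, i.e.\ that $\tau$ is surjective. No work beyond unwinding the definitions is required here.

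Next, for the implication (b) $\Rightarrow$ (c) I would argue by contraposition and invoke the observation recorded just before the statement: if $p \in \PP(G,B)$ is a GOE pattern, then, since $p$ appears in $Y$, there is some $y \in Y$ with $y|_{\supp(p)} = p$, and every such $y$ is a GOE configuration for $\tau$. Hence the existence of a GOE pattern forces the existence of a GOE configuration, which is exactly the contrapositive of (b) $\Rightarrow$ (c).

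The crux is the implication (c) $\Rightarrow$ (b), and the key step --- which I expect to be the main obstacle --- is to show that the image $\tau(X)$ is closed in the prodiscrete topology. This is where compactness enters: the space $A^G$ is compact and $X$ is closed in it, so $X$ is compact; since $\tau$ is continuous by the Curtis-Hedlund-Lyndon theorem (Theorem~\ref{t:curtis-hedlund-lyndon}), the image $\tau(X)$ is compact, and being a compact subset of the Hausdorff space $B^G$ it is closed. Combined with the equivariance of $\tau$, this also shows that $\tau(X)$ is itself a subshift. Once closedness is in hand, I would argue again by contraposition: given a GOE configuration $y \in Y \setminus \tau(X)$, the complement of $\tau(X)$ is open, so some basic cylinder $V(y,\Omega)$ with $\Omega \subset G$ finite is disjoint from $\tau(X)$. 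Setting $p \coloneqq y|_\Omega$, the pattern $p$ appears in $Y$ (it is realized by $y$) but cannot appear in $\tau(X)$, for if $\tau(x)|_\Omega = p$ for some $x \in X$ then $\tau(x) \in V(y,\Omega) \cap \tau(X)$, contradicting disjointness. Thus $p$ is a GOE pattern, which establishes (c) $\Rightarrow$ (b) and closes the loop $\text{(a)} \Leftrightarrow \text{(b)} \Leftrightarrow \text{(c)}$.
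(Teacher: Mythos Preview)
Your proof is correct and follows essentially the same approach as the paper's: the equivalence of (a) and (b) and the implication (b) $\Rightarrow$ (c) are handled definitionally, and (c) $\Rightarrow$ (b) is obtained via the compactness of $\tau(X)$. The paper's own proof is extremely terse (one line invoking compactness), whereas you have simply unpacked that line into the standard argument that a compact image in a Hausdorff space is closed and hence its complement contains a basic cylinder around any GOE configuration.
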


\begin{proof}
The equivalence of (a) and (b) as well as the implication (b) $\implies$ (c) are trivial.
The implication (c) $\implies$ (b) easily follows from the compactness of $\tau(X)$.
\end{proof}

\subsection{Pre-injectivity of cellular automata and mutually erasable patterns}
Let $G$ be a countable group and let $A$ and $B$ be finite sets.
Recall that two configurations $x,y \in A^G$ are called almost
equal if they coincide outside of a finite subset of $G$.

\begin{definition}
\label{def:pre-inj-subshifts}
Let $X \subset A^G$ and $Y \subset B^G$ be subshifts.
One says that a cellular automaton $\tau \colon X \to Y$ is \emph{pre-injective}\index{pre-injective!--- cellular automaton}\index{cellular automaton!pre-injective ---} if
there are no distinct configurations $x_1, x_2 \in X$ that are almost equal and satisfy $\tau(x_1) = \tau(x_2)$.
\end{definition}

A pair of configurations  $(x_1,x_2)  \in X \times X$
is called a \emph{diamond}\index{diamond}  
if $x_1$ and $x_2$ are distinct,  almost equal,  and have the same image under $\tau$
(cf.~\cite[Definition~8.1.15]{lind-marcus}).
Thus, pre-injectivity is equivalent to absence of diamonds.
If $(x_1,x_2)$ is a diamond, the nonempty finite subset
\[
\{g \in G: x_1(g) \not= x_2(g) \} \subset G
\]
is called the \emph{support}\index{support!--- of a diamond}\index{diamond!support of a ---} of the diamond $(x_1,x_2)$.
\par
Every injective cellular automaton is clearly pre-injective.
The converse is false, as shown by the following examples. 

\begin{example}
Take $G = \Z$, $A = \{0,1\} = \Z/2\Z$, and consider the cellular automaton $\tau \colon A^G \to A^G$ described in Example~\ref{ex:ca-sum-mod-2} (Rule~102 in Wolfram's notation).
Then $\tau$ is pre-injective.
Indeed, it is clear that if two configurations $x, x' \in A^G$ coincide on $\Z \cap (-\infty,n_0]$ for some $n_0 \in \Z$ and satisfy $\tau(x) = \tau(x')$ then $x = x'$.
However, $\tau$ is not injective since the two constant configurations have the same image.
\end{example}

\begin{example}
\label{ex:ca-golden-even-pre-inj}
Consider the cellular automaton $\tau \colon X \to Y$ from the golden mean subshift to the even subshift described in Example~\ref{ex:ca-golden-even}.\index{golden mean subshift}\index{subshift!golden mean ---}\index{even subshift}\index{subshift!even ---}
It is easy to see that $\tau$ is pre-injective by an argument similar to the one used in the previous example.
However, $\tau$ is not injective since the two sequences in $X$ with exact period $2$ have the same image under $\tau$, namely the constant sequence with only $0$s. 
\end{example}

Let $\Omega \subset G$ be a finite set and $p_1, p_2 \in X_\Omega$ 
two   patterns with support $\Omega$ appearing in $X$. 
One says that the patterns $p_1$ and $p_2$ are \emph{mutually erasable}\index{mutually erasable patterns} with respect to $\tau$,
briefly \emph{ME},
 provided the following hold:
\begin{enumerate}[\rm (MEP-1)] 
\item 
the set
\[
X_{p_1,p_2} \coloneqq  \{(x_1, x_2) \in X \times X:
x_1 \vert_\Omega = p_1, x_2 \vert_\Omega = p_2 \mbox{ and } x_1 \vert_{G \setminus \Omega} = x_2 \vert_{G \setminus \Omega}\}
\]
is nonempty;
\item 
for all $(x_1, x_2) \in  X_{p_1,p_2}$ one has $\tau(x_1) = \tau(x_2)$.
\end{enumerate}
Note that ``being ME" is an equivalence relation on $X_\Omega$. This equivalence relation is not trivial in general.
\begin{example}
The patterns with support $\Omega \coloneqq \{0,1,2\}$ associated with the words $00000$ and $00100$ are ME patterns for the majority vote cellular automaton described in Example~\ref{ex:majority-ca}.\index{majority vote}\index{cellular automaton!majority vote ---}
\end{example}

Observe that  if the patterns $p_1$ and $ p_2$
are ME, then so are $gp_1$ and $gp_2$ for all $g \in G$.

\begin{proposition}
\label{p:MEP-preinjectivity}
Let $\tau \colon X \to Y$ be a cellular automaton.
Then the  following conditions are equivalent:
\begin{enumerate}[\rm (a)]
\item
$\tau$ is pre-injective;
\item
$\tau$ admits no distinct ME patterns.
\end{enumerate}
\end{proposition}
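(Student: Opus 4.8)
The plan is to prove the two implications separately, in each case passing to the contrapositive and exploiting the reformulation recorded just before the statement, namely that pre-injectivity of $\tau$ is exactly the absence of diamonds. Throughout I fix a memory set $S \subset G$ and a local defining map $\mu \colon A^S \to B$ for $\tau$; the whole argument rests on the locality this provides, that $\tau(x)(g)$ depends only on $x|_{gS}$.

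The implication (a) $\implies$ (b) is the formal one, so I would dispose of it first by showing that distinct ME patterns force a diamond. Given distinct ME patterns $p_1, p_2 \in X_\Omega$, property (MEP-1) lets me choose a pair $(x_1,x_2) \in X_{p_1,p_2}$. Then $x_1$ and $x_2$ agree off the finite set $\Omega$, hence are almost equal; they are distinct because $x_1|_\Omega = p_1 \neq p_2 = x_2|_\Omega$; and (MEP-2) gives $\tau(x_1) = \tau(x_2)$. So $(x_1,x_2)$ is a diamond and $\tau$ is not pre-injective.

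The substance lies in the converse (b) $\implies$ (a): starting from a diamond $(x_1,x_2)$ I must manufacture genuine ME patterns. Let $\Omega_0 \coloneqq \{g \in G : x_1(g) \neq x_2(g)\}$ be its (nonempty, finite) support. The naive choice $p_i \coloneqq x_i|_{\Omega_0}$ fails: (MEP-1) is immediate, but (MEP-2) can break down, because perturbing the common background of $x_1,x_2$ near the boundary of $\Omega_0$ may change the images even though the diamond at hand had equal images. The remedy is to enlarge the support: I would set $\Omega \coloneqq \Omega_0 S^{-1} S$, a finite set containing $\Omega_0$ since $1_G \in S^{-1}S$, and put $p_i \coloneqq x_i|_\Omega$. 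These appear in $X$ and remain distinct, differing throughout $\Omega_0$, and (MEP-1) holds because $(x_1,x_2) \in X_{p_1,p_2}$. To verify (MEP-2), take any $(x_1',x_2') \in X_{p_1,p_2}$; such configurations agree off $\Omega$, and since $p_1,p_2$ coincide on $\Omega \setminus \Omega_0$, they in fact differ only within $\Omega_0$. I would then check equality of images cell by cell: if $gS \cap \Omega_0 = \emptyset$, then $x_1'$ and $x_2'$ agree on $gS$ and locality gives $\tau(x_1')(g) = \tau(x_2')(g)$; if $gS \cap \Omega_0 \neq \emptyset$, i.e.\ $g \in \Omega_0 S^{-1}$, then by the choice of $\Omega$ one has $gS \subseteq \Omega$, so $x_i'$ agrees with $x_i$ on $gS$, whence $\tau(x_i')(g) = \tau(x_i)(g)$, and these agree since $(x_1,x_2)$ is a diamond. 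Thus $\tau(x_1') = \tau(x_2')$, so $(p_1,p_2)$ is a pair of distinct ME patterns and (b) fails.

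The only genuinely non-formal step, and the main obstacle, is the enlargement $\Omega = \Omega_0 S^{-1} S$. The difficulty is that (MEP-2) is a statement about \emph{all} pairs in $X_{p_1,p_2}$, whereas a single diamond supplies equality of images for only one pair; enlarging by the memory set is precisely what guarantees that whenever the local rule is evaluated at a cell able to ``see'' the difference region $\Omega_0$, its entire input lies inside $\Omega$, where every competitor in $X_{p_1,p_2}$ is forced to coincide with the original diamond.
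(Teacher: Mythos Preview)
Your proof is correct and follows essentially the same approach as the paper's: the same contrapositive structure, the same enlargement $\Omega = \Omega_0 S^{-1} S$ of the diamond's support, and the same two-case analysis according to whether $gS$ meets $\Omega_0$. The only cosmetic difference is that the paper assumes $1_G \in S$ to get $\Omega_0 \subset \Omega$, whereas you (equivalently) note that $1_G \in S^{-1}S$.
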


\begin{proof} 
Suppose first that $\tau$ admits two distinct ME patterns $p_1$ and $p_2$. 
Let $\Omega$ denote their  common support.
Take  $(x_1, x_2) \in  X_{p_1,p_2}$.
Then the configurations $x_1$ and $x_2$  are almost equal since they coincide outside of $\Omega$. 
Moreover, they satisfy  $x_1 \not= x_2$ and $\tau(x_1) = \tau(x_2)$.
Therefore $(x_1,x_2)$ is a diamond for $\tau$.
It follows that $\tau$ is not pre-injective.
\par
Suppose now that $\tau$ is not pre-injective and let us show that $\tau$ admits two distinct ME patterns.
By definition, $\tau$ admits a diamond $(x_1,x_2) \in X \times X$. 
Let $\Delta$ denote the support of this diamond  
and  let $S \subset G$ be a memory set for $\tau$ with $1_G \in S$. 
Consider the set  $\Omega \coloneqq \Delta S^{-1} S$.
 We claim that the patterns $p_1 \coloneqq x_1\vert_\Omega$ and $p_2 \coloneqq x_2\vert_\Omega$ are distinct ME patterns.
First observe that $p_1 \neq p_2$ since $\varnothing \not= \Delta \subset \Omega$. 
Moreover, $X_{p_1,p_2}$ is nonempty since, by construction, $(x_1, x_2) \in X_{p_1,p_2}$.
To complete  the proof, we only need to show the following: if $(y_1, y_2) \in X_{p_1,p_2}$ 
then $\tau(y_1) = \tau(y_2)$.
Let $g \in G$. Suppose first that $g \in G \setminus \Delta S^{-1}$. Then $gS \cap \Delta = \varnothing$. Since $y_1$ and $y_2$ coincide on $G \setminus \Delta$,
we deduce that
\begin{equation}
\label{e:egales-en-dehors}
\tau(y_1)(g) = \tau(y_2)(g) \text{  for all } g \in G \setminus \Delta S^{-1}.
\end{equation}
Suppose now that $g \in \Delta S^{-1}$. Then $gS \subset \Delta S^{-1}S = \Omega$.
As $y_1\vert_\Omega = p_1 = x_1\vert_\Omega$ (resp.\ $y_2\vert_\Omega = p_2 = x_2\vert_\Omega$), by construction, we deduce that
\begin{equation}
\label{e:egales-dedans}
\tau(y_1)(g) =  \tau(x_1)(g) = \tau(x_2)(g) = \tau(y_2)(g) \text{  for all } g \in \Delta S^{-1}. 
\end{equation}
From \eqref{e:egales-en-dehors} and \eqref{e:egales-dedans} we deduce that $\tau(y_1) = \tau(y_2)$.
\end{proof}

\section{The Garden of Eden theorem for $\Z^d$}
\label{sec:goe-Zd}

In this section, we present a proof of the Garden of Eden theorem of Moore and Myhill for cellular automata over the group $\Z^d$. This is a particular case of the Garden of Eden theorem for amenable groups that will be established in the next section. 

\begin{theorem}
\label{t:GOE-zd}\index{Garden of Eden theorem!--- (for $\Z^d$)}\index{theorem!Garden of Eden --- (for $\Z^d$)}
Let $A$ be a finite set, $d \geq 1$ an integer, and $\tau \colon A^{\Z^d} \to A^{\Z^d}$ a cellular automaton.
Then the following conditions are equivalent:
\begin{enumerate}[{\rm (a)}]
\item $\tau$ is surjective;
\item $\tau$ is pre-injective.
\end{enumerate}
\end{theorem}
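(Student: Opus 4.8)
The plan is to translate both implications into statements about how many patterns $\tau$ can produce on a large cube, and to play the exponential gain coming from a local defect (a pair of mutually erasable patterns, or a Garden of Eden pattern) against the subexponential cost coming from the boundary of the cube. I fix a memory set $S$ for $\tau$, which I may enlarge so that $0 \in S$ and $S = -S$, and I work with the exhaustion of $\Z^d$ by cubes $C_n = \{-n,\dots,n\}^d$. For a finite $E \subset \Z^d$ I write $E^- = \{g \in \Z^d : g + S \subseteq E\}$ for the $S$-interior and $E^+ = E + S$ for the $S$-closure; since $S \subseteq C_r$ for some $r$, both $|E \setminus E^-|$ and $|E^+ \setminus E|$ are $O(n^{d-1}) = o(n^d)$ when $E = C_n$, which is the only place amenability of $\Z^d$ enters. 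I will also invoke Proposition~\ref{p:surj-equiv-no-goe} (surjectivity $\iff$ no GOE pattern) and Proposition~\ref{p:MEP-preinjectivity} (pre-injectivity $\iff$ no distinct ME patterns), so that both conditions become assertions about patterns on finite cubes, and I prove each implication in contrapositive form.

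For (a)$\Rightarrow$(b), Moore's half, I argue that if $\tau$ is not pre-injective then it is not surjective. By Proposition~\ref{p:MEP-preinjectivity} there are distinct mutually erasable patterns $p_1 \ne p_2$ with common support $\Omega$. Inside $C_n$ I pack a maximal family $T$ of pairwise disjoint translates $g + \Omega \subseteq C_n^-$; a volume estimate gives $|T| \ge c\,|C_n|$ for a constant $c > 0$ depending only on $\Omega$. The key point is that for any $x \in A^{\Z^d}$ I may, translate by translate over $g \in T$, switch an occurrence of $g + p_2$ to $g + p_1$ without changing $\tau(x)$ anywhere: each single switch is an instance of the mutually erasable relation translated by $g$, and the supports $g + \Omega$ are disjoint, so the switches do not interfere. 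The resulting configuration restricted to $C_n$ avoids $g + p_2$ on every $g \in T$ yet has the same image under $\tau$ on $C_n^-$. Counting restrictions to $C_n$ that avoid $g + p_2$ on each of the $|T|$ disjoint translates yields
\[
\bigl|\tau(A^{\Z^d})_{C_n^-}\bigr| \le \bigl(|A|^{|\Omega|} - 1\bigr)^{|T|}\,|A|^{\,|C_n| - |T|\,|\Omega|} = |A|^{|C_n|}\bigl(1 - |A|^{-|\Omega|}\bigr)^{|T|}.
\]
Since $|T| \ge c\,|C_n|$ while $|C_n| - |C_n^-| = o(|C_n|)$, the right-hand side is strictly smaller than $|A|^{|C_n^-|}$ for $n$ large, so some pattern on $C_n^-$ is a GOE pattern and $\tau$ is not surjective.

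For (b)$\Rightarrow$(a), Myhill's half, I argue that if $\tau$ is not surjective then it is not pre-injective. By Proposition~\ref{p:surj-equiv-no-goe} there is a GOE pattern on some support $\Delta$; packing disjoint translates of $\Delta$ in $C_n^+$ as above gives $|\tau(A^{\Z^d})_{C_n^+}| \le |A|^{|C_n^+|}(1-\delta)^{|T'|}$ with $\delta = |A|^{-|\Delta|}$ and $|T'| \ge c'\,|C_n|$. Now fix any $z \in A^{\Z^d}$ and let $\mathcal{C}$ be the family of configurations agreeing with $z$ off $C_n$; these are pairwise almost equal and $|\mathcal{C}| = |A|^{|C_n|}$. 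For $x \in \mathcal{C}$ one has $\tau(x)(g) = \tau(z)(g)$ whenever $g \notin C_n^+$, so $\tau(x)$ is determined by its restriction to $C_n^+$, which lies in $\tau(A^{\Z^d})_{C_n^+}$. Comparing cardinalities,
\[
|\mathcal{C}| = |A|^{|C_n|}, \qquad \bigl|\tau(\mathcal{C})\bigr| \le |A|^{|C_n^+|}(1-\delta)^{|T'|} = |A|^{|C_n|}\,|A|^{\,|C_n^+| - |C_n|}(1-\delta)^{|T'|},
\]
and because $|C_n^+| - |C_n| = o(|C_n|)$ is subexponential while $(1-\delta)^{|T'|}$ decays exponentially in $|C_n|$, the image $\tau(\mathcal{C})$ is strictly smaller than $\mathcal{C}$ for $n$ large. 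By the pigeonhole principle $\tau$ identifies two distinct members $x_1 \ne x_2$ of $\mathcal{C}$; as these agree off $C_n$, they form a diamond, so $\tau$ is not pre-injective.

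The main obstacle, and the real content of the theorem, is the quantitative balancing in the last step of each half: the local defect must contribute a genuinely exponential factor $(1 - \varepsilon)^{|T|}$ with $|T|$ proportional to the volume, and this gain must dominate the boundary discrepancies $|C_n^+| - |C_n^-|$. This domination is precisely the statement that cubes in $\Z^d$ have surface-to-volume ratio tending to $0$, that is, the Følner property of $\Z^d$; replacing cubes by an arbitrary Følner sequence is what will yield the general amenable case in the next section. The only remaining points needing care are the packing estimate $|T| \ge c\,|C_n|$ and the verification that the translate-by-translate switching in Moore's half leaves $\tau$ globally unchanged, both of which are elementary once the disjointness of the supports $g + \Omega$ is arranged.
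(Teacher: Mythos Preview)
Your proof is correct and rests on the same core idea as the paper's: a local defect (a pair of ME patterns, or a GOE pattern) propagates through disjoint translates to produce an exponential deficit in the number of image patterns on a large cube, while the discrepancy between a cube and its $S$-interior or $S$-closure is only subexponential. The two implementations differ, however. The paper enlarges the support of the defect to a cube $\Omega_k$ and then \emph{exactly tiles} the large cube $\Omega_{nk}$ by $n^d$ translates of $\Omega_k$; this gives a clean product estimate $(|A|^{k^d}-1)^{n^d}$ and reduces the endgame to an explicit elementary inequality. You instead keep the original support $\Omega$ and use a \emph{maximal packing} of disjoint translates, together with the ``switching'' argument (replace every occurrence of $g+p_2$ by $g+p_1$ without changing the image) to get the avoidance bound; you then compare growth rates asymptotically rather than via an explicit lemma. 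Your route requires the extra (easy) packing estimate $|T|\ge c\,|C_n|$, but it has the advantage of not relying on any special geometry of $\Z^d$: it is essentially the argument the paper gives in Section~\ref{sec:goe-amenable} for general amenable groups, where exact tilings by translates of a fixed cube are unavailable and one works instead with $E$-tilings and Proposition~\ref{p:inegalite-entropique-tiling}.
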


The implication (a) $\implies$ (b) is due to Moore~\cite{moore}\index{Moore, Edward F.} and the converse to Myhill~\cite{myhill}.\index{Myhill, John R.}
We shall prove the contraposite of each implication.
Before undertaking the proof of Theorem \ref{t:GOE-zd}, let us first introduce some notation and establish some preliminary results.
\par
Let $S\subset \Z^d$ be a memory set for $\tau$.
Since any finite subset of $\Z^d$ containing a memory set for $\tau$ is itself a memory set for $\tau$, it is not restrictive to suppose that $S =\{0,\pm 1,\ldots, \pm r\}^d$ for some integer $r \geq 1$.
\par
Let us set, for each integer $m \geq 2r$,
\[
\Omega_m \coloneqq \{0,1,\ldots, m-1\}^d
\]
\[
\Omega_m^+ \coloneqq \{-r,-r+1,\ldots, m+r-1\}^d
\]
and 
\[
\Omega_m^- \coloneqq \{r,r+1,\ldots, m-r-1\}^d,
\]
so that
\begin{equation}
\label{e:size-cubes}
|\Omega_m| = m^d, \ \ \ |\Omega_m^+| = (m+2r)^d, \ \mbox{ and } \ |\Omega_m^-| = (m-2r)^d.
\end{equation}
Observe that if two configurations $x_1,x_2\in A^{\Z^d}$ coincide on $\Omega_m$ 
(resp.\ $\Omega_m^+$, resp.\ $\Z^d \setminus \Omega_m^-$) then
$\tau(x_1)$ and $\tau(x_2)$ coincide on $\Omega_m^-$ (resp.\ $\Omega_m$, resp.\ $\Z^d \setminus \Omega_m$).
\par
Also, let us set, for all  integers $k,n \geq 1$
\[
T_n^k \coloneqq \{t=(t_1k, t_2k, \ldots, t_dk)\in \Z^d: 0 \leq t_j \leq n-1\},
\]
and observe that the $n^d$ cubes  $t + \Omega_k$, for $t$ running over $T_n^k$,
form a partition of the cube $\Omega_{n k}$.
\par
Finally, we introduce the following additional notation. 
Given a finite subset $\Omega \subset G$  and  $p,q \in A^\Omega$ 
we write $p \sim q$ if and only if the patterns $p$ and $q$ are ME for $\tau$. 
As usual, we denote by $A^\Omega/\sim$ the quotient set of $A^\Omega$ by $\sim$, i.e., 
the set of all ME-equivalence classes of patterns supported by 
$\Omega$.
\par
In the proof of Theorem \ref{t:GOE-zd} we shall make use of the following elementary result
(here one should think of $a:=|A|$ as to the cardinality of the alphabet set and $a^{(n k )^d}$
(resp.\ $a^{(n k - 2r)^d}$, for $n k \geq 2r$) as to the number of all patterns supported by $\Omega_{n k}$
(resp.\ $\Omega_{n k}^-$), cf.\ \eqref{e:size-cubes}.

\begin{lemma}
Let $a,k,d,r$ be positive integers with $a \geq 2$. 
Then there exists $n_0 = n_0(a,k,d,r) \in \N$ such that
\begin{equation}
\label{e:akdn}
 \left(a^{k^d} - 1\right)^{n^d} < a^{(n k - 2r)^d}
\end{equation}
for all $n \geq n_0$.
\end{lemma}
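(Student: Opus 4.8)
The plan is to take logarithms to base $a$ and thereby reduce~\eqref{e:akdn} to a comparison between a quantity of polynomial growth in $n$ and one of genuinely exponential decay. First I would factor out the dominant power of $a$ by writing $a^{k^d} - 1 = a^{k^d}\bigl(1 - a^{-k^d}\bigr)$, so that
\[
\bigl(a^{k^d} - 1\bigr)^{n^d} = a^{k^d n^d}\bigl(1 - a^{-k^d}\bigr)^{n^d}.
\]
Dividing the desired inequality~\eqref{e:akdn} by its right-hand side $a^{(nk - 2r)^d}$ and using $k^d n^d = (nk)^d$, the claim becomes equivalent to
\[
a^{\,(nk)^d - (nk - 2r)^d}\bigl(1 - a^{-k^d}\bigr)^{n^d} < 1.
\]
Since the base $a \geq 2$ exceeds $1$, the function $\log_a$ is increasing, so taking $\log_a$ turns this into
\[
\bigl[(nk)^d - (nk - 2r)^d\bigr] - \delta\, n^d < 0, \qquad \text{where } \delta \coloneqq -\log_a\bigl(1 - a^{-k^d}\bigr).
\]
Because $a \geq 2$ and $k^d \geq 1$ give $0 < 1 - a^{-k^d} < 1$, the number $\delta$ is a fixed positive constant, independent of $n$.

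It then remains to see that the bracketed term grows only polynomially, of degree $d-1$, while $\delta\, n^d$ grows like $n^d$. Concretely, setting $x = nk$ and applying the mean value theorem to $t \mapsto t^d$ on $[x - 2r, x]$ (or a one-line binomial expansion), one obtains, for every $n$ with $nk \geq 2r$,
\[
(nk)^d - (nk - 2r)^d \leq d\,(nk)^{d-1}(2r) = 2rd\,k^{d-1}\, n^{d-1}.
\]
Hence the left-hand side of the inequality to be proved is bounded above by
\[
n^{d-1}\bigl(2rd\,k^{d-1} - \delta\, n\bigr),
\]
which is strictly negative as soon as $n > 2rd\,k^{d-1}/\delta$. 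Choosing $n_0 = n_0(a,k,d,r)$ to be any integer exceeding both $2r/k$ and $2rd\,k^{d-1}/\delta$ then yields~\eqref{e:akdn} for all $n \geq n_0$.

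I do not expect a serious obstacle here: the whole content is that the ratio $(a^{k^d}-1)/a^{k^d}$ is a constant strictly below $1$, so the factor $\bigl(1 - a^{-k^d}\bigr)^{n^d}$ decays at the super-polynomial rate $c^{n^d}$ with $c < 1$, easily beating the merely polynomial (degree $d-1$) growth of the exponent gap $(nk)^d - (nk - 2r)^d$. The only point requiring a genuine (but routine) estimate is the bound $(nk)^d - (nk - 2r)^d = O(n^{d-1})$; everything else is bookkeeping to pin down an admissible threshold $n_0$.
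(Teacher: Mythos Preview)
Your proof is correct and follows essentially the same route as the paper: take $\log_a$ of both sides and observe that the left-hand side falls short of $(nk)^d$ by a constant times $n^d$, which dominates the degree-$(d-1)$ correction coming from $(nk)^d - (nk-2r)^d$. The paper presents this a bit more slickly by dividing through by $n^d$ to reduce~\eqref{e:akdn} to $\log_a(a^{k^d}-1) < (k - 2r/n)^d$ and then simply invoking the limit $(k - 2r/n)^d \to k^d > \log_a(a^{k^d}-1)$, whereas you work harder to extract an explicit threshold via the mean value theorem; but the underlying idea is identical.
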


\begin{proof}
Taking logarithms to base $a$, Inequality~\eqref{e:akdn} is equivalent to
\begin{equation}
\label{e:akdn2}
\log_a\left(a^{k^d} - 1\right) < \left(k - \frac{2r}{n}\right)^d.
\end{equation}
Since
\[
\log_a\left(a^{k^d} - 1\right) < \log_a\left(a^{k^d}\right) = k^d = \lim_{n \to \infty} \left(k - \frac{2r}{n}\right)^d,
\]
we deduce that there exists $n_0 \in \N$ such that \eqref{e:akdn2} and therefore \eqref{e:akdn} are satisfied for all $n \geq n_0$.
\end{proof}

\begin{proof}[Proof of Theorem~\ref{t:GOE-zd}]
We can assume that $a \coloneqq |A| \geq 2$.
\par
Suppose first that $\tau$ is not pre-injective.
Then, by Proposition~\ref{p:MEP-preinjectivity}, $\tau$ admits two distinct ME patterns,
say $p_1$ and $p_2$.
Denote by $\Omega \subset \Z^d$ their common support.
Since, for all $t \in \Z^d$, the patterns $t p_1$ and $t p_2$ (with support $t + \Omega$) are also distinct and ME, and any finite subset of $\Z^d$ containing the support of two distinct ME patterns is itself the support of two distinct ME patterns, we may assume that $\Omega = \Omega_k$ 
for some integer $k \geq 2 r$.
As the patterns $t p_1$ and $t p_2$ are ME, we have
\begin{equation}
\label{e:card-me-tplus}
| A^{t+\Omega_k}/\sim | \leq \vert A^{t+\Omega_k} \vert - 1 = a^{k^d}-1
\end{equation}
for all $t \in \Z^d$.
\par
Now observe that two patterns with support  
$\Omega_{n k}$ are ME if  their restrictions to
$t+\Omega_k$ are ME for every  $t \in T_n^k$.
Using~\eqref{e:card-me-tplus}, we deduce that
\[
| A^{\Omega_{n k}}/\sim| \leq  \prod_{t \in T_n^k} |A^{t + \Omega_k}/\sim| \leq  (a^{k^d}-1)^{n^d}.
\]
Taking $n \geq n_0(a,k,d,r)$, we then get
\[
\left|\tau(A^{\Z^d})_{\Omega_{n k}^-} \right|  \leq | A^{\Omega_{n k}} / \sim |  
\leq  (a^{k^d}-1)^{n^d} < a^{(n k - 2r)^d} = | A^{\Omega_{n k}^-} |.
\]
This implies that $\tau(A^{\Z^d})_{\Omega_{n k}^-} \subsetneqq A^{\Omega_{n k}^-}$, 
so that there must  exist a GOE pattern for $\tau$ with support $\Omega_{n k}^-$.
Consequently, $\tau$ is not surjective.
This shows that   (a) $\implies$ (b).
\par
Let us now turn to the proof of the converse implication.
Suppose that $\tau$ is not surjective. 
Then, by Proposition~\ref{p:surj-equiv-no-goe}, there exists a GOE pattern $p$ for $\tau$.
Since $tp$ is a GOE pattern for every $t \in \Z^d$, and any finite subset of $\Z^d$ containing the support of a GOE pattern is itself the support of a GOE pattern, we can assume that $p$ is supported by the cube $\Omega_k$ for some integer   $k \geq 2r$.
\par
Decompose again $\Omega_{n k}$ into the $n^d$ translates $t+\Omega_k$, with $t \in T_n^k$, and observe that
$tp \in A^{t+\Omega_k}$ is GOE for every $t \in T_n^k$.  Any pattern $q \in A^{\Omega_{n k}}$ which is not GOE satisfies that
$q\vert_{t+\Omega_k}$ is not GOE for every $t \in T_n^k$. As a consequence, we have
that
\begin{equation}
\label{e:g-n-1}
\left| \tau(A^{\Z^d})_{\Omega_{n k}} \right| 
\leq   \prod_{t \in T_n^k} \left| \tau(A^{\Z^d})_{t + \Omega_k} \right| 
\leq   (a^{k^d} -1)^{n^d}.
\end{equation}
\par
Let us fix now some element $a_0 \in A$ and consider the set $X$ consisting of all configurations 
$x \in A^{\Z^d}$ that satisfy
\[
x(g) = a_0 \quad \text{for all  } g \in \Z^d \setminus \Omega_{n k}^-.
\]
Observe that if $x_1$ and $x_2$ are in   $X$, then $\tau(x_1)$ and $\tau(x_2)$ coincide on 
$\Z^d \setminus \Omega_{n k}$.
It follows that
\[
|\tau(X)| = \left| \tau(X)_{\Omega_{n k}} \right|.
\]
On the other hand, taking $n \geq n_0(a,k,d,r)$ and using~\eqref{e:g-n-1}, 
we get
\[
\left| \tau(X)_{\Omega_{n k}} \right| 
\leq \left| \tau(A^{\Z^d})_{\Omega_{n k}} \right| 
\leq (a^{k^d} -1)^{n^d}
< a^{(n k - 2r)^d} = | A^{\Omega_{kn}^-} | = |X|
\]
and hence
\[
|\tau(X)| < |X|.
\]
By the pigeon-hole principle, this implies  that there exist two distinct configurations $x_1,x_2 \in X$ such that $\tau(x_1) = \tau(x_2)$.
As all configurations in $X$ are almost equal,
we deduce that   $\tau$ is not pre-injective.
This shows that (b) $\implies$ (a).
\end{proof}

\begin{remark}
The proof of the implication (a) $\implies$ (b) shows that if $\tau$ admits two distinct ME patterns supported by a
cube of side $k \geq 2 r$, then a cube of side $n k -2r$, with $n \geq n_0(a,k,d,r)$, must support a GOE pattern.
Conversely, a small addition to the proof of the implication (b) $\implies$ (a) yields that if $\tau$ admits a GOE pattern supported by a cube of side $k \geq 2 r$, then a cube of side $kn+2r$, with $n \geq n_0(a,k,d,r)$, supports two distinct ME patterns.
Indeed, the proof shows the existence of two configurations
$x_1,x_2 \in A^{\Z^d}$ that coincide outside of  $\Omega_{n k}^-$
and satisfy  $\tau(x_1) = \tau(x_2)$. It then follows from the proof of the implication (b) $\implies$ (a) in
Proposition~\ref{p:MEP-preinjectivity}  that the set 
\[
(\Omega_{n k}^- +(- S))+S = \Omega_{n k} + S = \Omega_{n k}^+
\] 
supports two distinct ME patterns.
\end{remark}

\section{The Garden of Eden theorem for general amenable groups}
\label{sec:goe-amenable}

\subsection{Amenability}
\label{ss:amenable-groups} 
(cf.~\cite{greenleaf}, \cite{paterson}, \cite[Chapter~4]{book}, \cite[Chapter~9]{coo-book})

\begin{definition}
A countable group $G$ is called \emph{amenable}\index{amenable group}\index{group!amenable ---} if there exists a sequence $(F_n)_{n \in \N}$ of nonempty finite subsets of $G$ such that
\begin{equation}
\label{e:folner-s}
\lim_{n \to \infty} \frac{\vert F_n \setminus  F_n g\vert}{\vert F_n \vert} = 0 \text{  for all } g \in G.
\end{equation}
 Such a sequence is called a \emph{F\o lner sequence}\index{F\o lner sequence} for $G$.
 \end{definition}

Note that if $A$ and $B$ are finite sets with the same cardinality, then 
$|A \setminus B| = |B \setminus A|$ and  $|A \bigtriangleup B| = |A \setminus B| + |B \setminus A| = 2 |A \setminus B|$,
where $\bigtriangleup$ denotes symmetric difference of sets.
As $| F g| = |F|$ for every finite subset $F \subset G$ and any $g \in G$,
it follows that Condition~\eqref{e:folner-s} is equivalent to each of the following conditions:
\begin{equation}
\label{e:folner-s-2}
\lim_{n \to \infty} \frac{\vert  F_n g \setminus  F_n \vert}{\vert F_n \vert} = 0 \text{  for all } g \in G,
\end{equation}
or
\begin{equation}
\label{e:folner-s-3}
\lim_{n \to \infty} \frac{\vert  F_n \bigtriangleup  F_n g \vert}{\vert F_n \vert} = 0 \text{  for all } g \in G.
\end{equation}

\begin{example}
All finite groups are amenable. Indeed, if $G$ is a finite group, then the constant sequence, 
defined by $F_n\coloneqq G$ for all $n \in \N$,  is  a F\o lner sequence for $G$ since $F_n \setminus  F_n g = \varnothing$ 
for every $g \in G$.
\end{example}

\begin{example}
The free abelian groups of finite rank $\Z^d$, $d \geq 1$,  are also amenable.
As a F\o lner sequence for $\Z^d$, one can take for instance the sequence of cubes 
\begin{equation}
\label{e:folner-for-Z-d}
F_n \coloneqq \{x \in \Z^d: \Vert x \Vert_\infty \leq n\} = \{0,\pm 1,  \dots,\pm n\}^d,
\end{equation}
where $\Vert x \Vert_\infty \coloneqq \max_{1 \leq i \leq d} |x_i|$ for all $x = (x_1,\dots,x_d) \in \Z^d$ is the sup-norm.
To see this, observe that
\begin{equation}
\label{e:ub-card-fn-Zd}
|F_n| = (2n + 1)^d
\end{equation} 
and, by the triangle inequality, 
$$
F_n + g \subset F_{n + \Vert g \Vert_\infty} \text{  for all } g \in \Z^d.
$$
Since
$$
F_n \subset F_{n + \Vert g \Vert_\infty},
$$
this implies
\begin{equation}
\label{e:up-bound-Fn-Zd}
|( F_n + g) \setminus F_n| \leq  (2n + \Vert g \Vert_\infty + 1)^d - (2n + 1)^d.
\end{equation}
As the right-hand side of~\eqref{e:up-bound-Fn-Zd} is a polynomial of degree $d - 1$ in $n$
while $|F_n|$ is a polynomial of degree $d$ in $n$ by~\eqref{e:ub-card-fn-Zd}, we conclude that
\begin{equation*}
\lim_{n \to \infty} \frac{\vert ( F_n + g ) \setminus  F_n \vert}{\vert F_n \vert} = 0, 
\end{equation*}
which is~\eqref{e:folner-s-2} in additive notation.
\end{example}

Let $G$ be a finitely generated group.
If $S \subset G$ is a finite symmetric generating subset, the \emph{Cayley graph}\index{Cayley graph} of $G$ with respect to $S$ 
is the graph $\GG(G,S)$ whose set of vertices  is $G$
and two vertices $g,h \in G$ are joined by an edge if and only if $h = g s$ 
for some $s \in S$.
Equip the set of vertices of $\GG(G,S)$   with its graph metric and consider  
the ball $B_n \subset G$ of radius $n$ centered at $1_G$.
It is easy to see that the sequence of positive integers $(|B_n|)_{n \in  \N}$ is submultiplicative.
Thus the limit
\begin{equation}
\label{e:growth}
\gamma(G,S) \coloneqq \lim_{n \to \infty} \sqrt[n]{|B_n|} 
\end{equation}
exists and satisfies $1 \leq \gamma(G,S) < \infty$.
One says that the group $G$ has \emph{subexponential growth}\index{subexponential growth}\index{group!--- of subexponential growth} if
$\gamma(G,S) = 1$ and  \emph{exponential growth}\index{exponential growth}\index{group!--- of exponential growth}  if $\gamma(G,S) > 1$. 
The fact that $G$ has subexponential (resp.~exponential) growth does not depend on the choice of the finite generating subset $S \subset G$ although the value of $\gamma(G,S)$ does.

\begin{example} 
The groups $\Z^d$ have subexponential growth.
Indeed, if $e_1,\ldots,e_d$ is the canonical basis of $\Z^d$,
and we take $S \coloneqq \{\pm e_1, \dots, \pm e_d\}$, then
the graph distance between two vertices $g$ and $h$ of $\GG(\Z^d,S)$ is 
$\Vert g - h \Vert_1$, where we write
$\Vert x \Vert_1 \coloneqq \sum_{1 \leq i \leq d} |x_i|$ for the $1$-norm of $x = (x_1,\dots,x_d) \in \Z^d$.
 We then have
$$
B_n =  \{g \in \Z^d : \Vert g \Vert_1 \leq n\} \subset \{0,\pm 1, \dots,\pm n\}^d
$$
and hence
$|B_n| \leq  (2 n + 1)^d$.
This implies that $\Z^d$ has subexponential growth.
Here it can be checked that the sequence  $(B_n)_{n \in \N}$ is also a F\o lner sequence for $\Z^d$.
\end{example}

When $G$ is an arbitrary  finitely generated group of   
subexponential growth and $S$ is a finite symmetric generating set for $G$, 
  it can be shown that one can always extract   a F\o lner sequence from the sequence 
$(B_n)_{n \in \N}$.  
Consequently, every finitely generated group with subexponential growth is amenable.

\begin{example}
\label{ex:F2-nonamen}
A (nonabelian) free group on two generators has exponential growth and is  not amenable.
Indeed, let $G$ be a free group based on two generators $a$ and $b$.
Consider the finite symmetric generating subset $S \subset G$ defined by
$$
S \coloneqq  \{a,b,a^{-1},b^{-1}\}.
$$
Then every element $g \in G$ can be uniquely written in \emph{reduced form},\index{reduced form of an element of a free group} i.e., in the form
\begin{equation}
\label{e:normal-form-in-free}
g = s_1 s_2 \dots s_n,
\end{equation}
where $n \geq 0$,  $s_i \in S$ for all $1 \leq i \leq n$, and $s_{i + 1} \not= s_i^{-1}$ for all $1 \leq i \leq n - 1$.
The integer $\ell_S(g) \coloneqq n$ is called the \emph{length}\index{length!--- of an element in a free group} of $g$ with respect to the generators $a$ and $b$.  
It is equal to the distance from $g$ to $1_G$ in the Cayley graph $\GG(G,S)$.
We deduce that  $|B_n| = 4 \cdot 3^{n - 1}$ for all $n \geq 1$ so that
  \[
  \gamma(G,S) = \lim_{n \to \infty} \sqrt[n]{4 \cdot 3^{n - 1}} =  3 > 1.
  \]
This shows that $G$ has exponential growth.  
\par
 Now suppose by contradiction that 
$(F_n)_{n \in \N}$ is a F\o lner sequence for $G$
and choose  some positive real number $\varepsilon <  1/2$.
Since the sequence $(F_n)_{n \in \N}$ is F\o lner, 
it follows from~\eqref{e:folner-s} that there exists  an integer $N \geq 0$ such that the set 
$F \coloneqq F_{N}$ satisfies  
\begin{equation}
\label{e:impossible-in-free}
|F \setminus F s| \leq \varepsilon |F|  \quad \text{for all  }s \in S.
\end{equation}
Denote,
for each $s \in S$, 
by  $G_s $ the subset of $G$ consisting of all elements
 $g \not= 1_G$ 
whose reduced form   ends with the letter 
$ s^{-1}$.
The four   sets $G_s$, $s \in S$, are pairwise disjoint
so that
\begin{equation}
\label{e:Gs-disjoin-F}
\sum_{s \in S} |F \cap G_s| \leq |F|.
\end{equation}
On the other hand, for each $s \in S$, we have that
\begin{equation}
\label{e:s-F-setminus-G-s}
|F| = |F \setminus G_s| + |F \cap G_s| =  |(F \setminus G_s) s| + |F \cap G_s|.
\end{equation}
 We now observe that  
$$
(G \setminus G_s) s \subset G_{s^{-1}}
$$
so that
$$
(F \setminus G_s) s \subset (F s \setminus F) \cup (F \cap G_{s^{-1}}) 
$$
and hence
\begin{align*}
|(F \setminus G_s) s| &\leq |F s \setminus F| + |F \cap G_{s^{-1}}|  \\ 
&= |F \setminus  F s| +  |F \cap G_{s^{-1}}| \\
&\leq \varepsilon |F| +  |F \cap G_{s^{-1}}| && \text{(by \eqref{e:impossible-in-free})}.  
\end{align*}
By using   \eqref{e:s-F-setminus-G-s}, we deduce that
$$
|F| \leq \varepsilon |F| +  |F \cap G_{s^{-1}}| + |F \cap G_s| 
$$
for all $s \in S$.
After summing up over all $s \in S$, this yields
\begin{align*}
4|F| &\leq 4\varepsilon |F| +  \sum_{s \in S} \left( |F \cap G_{s^{-1}}| + |F \cap G_s| \right) \\
&= 4\varepsilon |F| +  2\sum_{s \in S} |F \cap G_s|.   
\end{align*}
Finally, combining with \eqref{e:Gs-disjoin-F},   we  obtain
$$
4|F| \leq 4 \varepsilon |F| + 2 |F|
$$
and hence $|F| \leq 2 \varepsilon |F|$,  which is a contradiction since $F \not= \varnothing$ and  $\varepsilon < 1/2$.
 This proves that $G$ is not amenable.  
\end{example}

The class of amenable groups is closed under the operations of taking subgroups,
quotients, extensions (this means that if $1 \to H \to G \to K \to 1$ is an exact sequence with both $H$ and $K$  amenable, so is $G$), and inductive limits.
Consequently, all locally finite groups, all abelian groups and, more generally, all solvable groups are amenable. On the other hand every group
containing a free subgroup on two generators  is nonamenable. 
This implies for instance that all 
nonabelian free groups and the 
groups $\SL(n,\Z)$, $n \geq 2$, are nonamenable.
However, there are  groups containing no free subgroups on two generators that are nonamenable.
The first examples of such a group was given in~\cite{olshanskii-monster}
where Ol'{\v{s}}anski{\u\i}\index{Ol'{\v{s}}anski{\u\i}, Alexander Yu.} constructed a nonamenable \emph{monster group}\index{monster group}\index{group!monster ---} in which every proper subgroup is cyclic.
\par
Let us note  that there are finitely generated groups of exponential growth that are amenable.
For example, the  Baumslag-Solitar group\index{Baumslag-Solitar group}\index{group!Baumslag-Solitar ---} 
$BS(1,2)$, i.e.,  the group with presentation  $\langle a,b : a b a^{-1} = b^2 \rangle$, and the 
lamplighter group,\index{lamplighter group}\index{group!lamplighter ---}
i.e., the wreath product $(\Z/2\Z)\wr \Z$, 
have exponential growth but  are both solvable and hence amenable 
(cf.~\cite{de-la-harpe}).
\par
The original definition of amenability that was given by von Neumann~\cite{von-neumann}\index{von Neumann, John}  in 1929 is that a group $G$ is amenable if there exists
a finitely additive
invariant probability measure defined on the set of all subsets of $G$.
A key observation due to Day~\cite{day-ijm-1957}\index{Day, Mahlon M.} is that this is equivalent to the existence of  an invariant mean on the Banach space $\ell^{\infty}(G)$ of bounded real-valued functions on $G$.
 It is also in~\cite{day-ijm-1957} that the term \emph{amenable}\index{amenable group} occured for the first time
 (see \cite[p.~137]{ornstein-weiss}).
The fact that nonabelian free groups are not amenable is related to  the Hausdorff-Banach-Tarski paradox\index{Hausdorff-Banach-Tarski paradox} which actually was the motivation of von Neumann for introducing the notion of amenability.

\subsection{Entropy}
Let $G$ be  countable group, $A$ a finite set, and $X$ a subset
of $A^G$ (not necessarily a subshift).
Given a finite subset $\Omega \subset G$, recall (cf.~\eqref{e:x-VV}) that
$$
X_\Omega \coloneqq  \{x\vert_\Omega: x \in X\} \subset A^\Omega.
$$
Suppose now that the group $G$ is amenable and fix a F\o lner sequence $\FF = (F_n)_{n \in \N}$ for $G$.
The \emph{entropy}\index{entropy}\index{subshift!entropy of a ---} of $X$ (with respect to $\FF$) is defined by
\begin{equation}
\label{e:entropy}
h_\FF(X) \coloneqq \limsup_{n \to \infty} \frac{\log |X_{F_n}|}{|F_n|}.
\end{equation}
Since $X_F \subset A^F$ and hence $\log |X_F| \leq |F| \cdot \log |A|$ for every finite subset $F \subset G$, we always have
$$
h_\FF(X) \leq h_\FF(A^G) = \log |A|.
$$ 

\begin{example}
Take $G = \Z$ and  $A = \{0,1\}$.
Let us compute the entropy of the golden mean subshift $X \subset A^G$\index{entropy!--- of the golden mean subshift}\index{golden mean subshift!entropy of the ---}\index{subshift!golden mean ---} 
(cf.~Example~\ref{ex:golden-sub}) with respect to the F\o lner sequence
$\FF = (F_n)_{n \in \N}$, where $F_n \coloneqq \{0,1,\dots,n\}$.
We observe that   $u_n \coloneqq |X_{F_n}|$ satisfies $u_0 = 2$, $u_1 = 3$, and
$u_{n + 2} = u_{n + 1} + u_n$ for all $n \geq 2$. Thus, the sequence $(u_n)_{n \in \N}$ is a Fibonacci sequence and, by Binet's formula,
$$
u_n = \frac{1}{\sqrt{5}} \left(\varphi^{n + 3} - (1 - \varphi)^{n + 3}\right),
$$
where $\varphi \coloneqq  (1 + \sqrt{5})/2$ is the golden mean (this is the origin of  the name of this subshift). It follows that
$$
h_\FF(X) = \limsup_{n \to \infty} \frac{\log u_n}{n + 1} = \log \varphi.
$$
\end{example}

\begin{example}
Take again $G = \Z$ and  $A = \{0,1\}$,
and let us compute now the entropy of the even subshift $X \subset A^G$\index{entropy!--- of the even subshift}\index{even subshift!entropy of the ---}\index{subshift!even ---}  
(cf.~Example~\ref{ex:even-subshift}) with respect to the F\o lner sequence
$\FF = (F_n)_{n \in \N}$, where $F_n \coloneqq \{0,1,\dots,n\}$.
We observe that   $u_n \coloneqq |X_{F_n}|$ satisfies $u_0 = 2$, $u_1 = 4$, and
$u_{n + 2} = 1 + u_{n + 1} + u_n $ for all $n \geq 2$. Thus, the sequence $(v_n)_{n \in \N}$,
defined by $v_n \coloneqq  1 + u_n$ for all $n \in \N$,  is a Fibonacci sequence.
As $v_0 = 3$ and $v_1 = 5$,  by applying again Binet's formula, we get
\[
u_n = - 1 + v_n = - 1 + \frac{1}{\sqrt{5}} \left(\varphi^{n + 5} - (1 - \varphi)^{n + 5}\right),
\]
where $\varphi$ is the golden mean.
 It follows that
\[
h_\FF(X) = \limsup_{n \to \infty} \frac{\log u_n}{n + 1} = \log \varphi.
\]
Thus, the even subshift has the same entropy as the golden mean subshift with respect to $\FF$.
\end{example}

\begin{example}
\label{ex:ent-ledrappier-sub}
Take $G = \Z^2$ and  $A = \Z/2\Z$.
Let us compute the entropy of the Ledrappier subshift\index{entropy!--- of the Ledrappier subshift}\index{Ledrappier subshift!entropy of the ---}\index{subshift!Ledrappier ---} 
$X \subset A^G$ (cf.~Example~\ref{ex:ledrappier-sub}) with respect to the F\o lner sequence
$\FF = (F_n)_{n \in \N}$, where $F_n \coloneqq \{0,1,\dots,n\}^2$.
We observe that, for each $x \in X$, the pattern $x\vert_{F_n}$ is entirely determined by 
$x \vert_{H_n}$, where $H_n \subset \Z^2$ is the horizontal interval
$H_n \coloneqq \{0,1, \dots,2n\} \times \{0\}$. 
Therefore,    $u_n \coloneqq |X_{F_n}|$ satisfies
$$
\log u_n \leq |H_n| \cdot \log|A| = (2n + 1) \log 2.
$$
This gives us
$$
h_\FF(X) = \limsup_{n \to \infty} \frac{\log u_n}{(n + 1)^2} = 0.
$$
\end{example}

\begin{remark}
It is a deep result due to Ornstein\index{Ornstein, Donald S.} and Weiss~\cite{ornstein-weiss}\index{Weiss, Benjamin} that,
when $G$ is a countable amenable group, $A$ a finite set, and $X \subset A^G$  a subshift, 
then the $\limsup$ in \eqref{e:entropy} is actually a true limit and does not depend on the
particular choice of the F\o lner sequence $\FF$ for $G$.
However, we shall not need it in the sequel.
\end{remark}

An important property of cellular automata that we shall use in the proof of the Garden of Eden theorem  below is that they cannot increase  entropy.
More precisely, we have the following result.

\begin{proposition}
\label{p:entropie-image}
Let $G$ be a countable amenable group with F\o lner sequence $\FF = (F_n)_{n \in \N}$ and
$A,B$  finite sets. 
Suppose that  $\tau \colon A^G \to B^G$ 
is a cellular automaton and $X$ is a subset of $A^G$. 
Then one has $h_\FF(\tau(X)) \leq h_\FF(X)$.
\end{proposition}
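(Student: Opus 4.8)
The plan is to show that for a cellular automaton $\tau$ with memory set $S$, the number of distinct patterns that $\tau(X)$ exhibits on a Følner set $F_n$ is controlled by the number of patterns $X$ exhibits on a slightly enlarged set $F_n S$, and then to verify that the enlargement $F_n S$ is asymptotically negligible against $F_n$ thanks to the Følner property.

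\emph{First step: a counting inequality.} Fix a memory set $S \subset G$ and local defining map $\mu \colon A^S \to B$ for $\tau$. The key observation is that the restriction $\tau(x)\vert_{F_n}$ is completely determined by $x\vert_{F_n S}$: indeed, for $g \in F_n$, Formula~\eqref{def:automate} shows that $\tau(x)(g) = \mu((g^{-1}x)\vert_S)$ depends only on the values of $x$ on $gS \subset F_n S$. Hence the assignment $x\vert_{F_n S} \mapsto \tau(x)\vert_{F_n}$ is well defined, giving a surjection from $X_{F_n S}$ onto $\tau(X)_{F_n}$. Therefore
\[
|\tau(X)_{F_n}| \leq |X_{F_n S}|
\]
for every $n$. (One may instead phrase this as: if $x,y \in X$ agree on $F_n S$ then $\tau(x),\tau(y)$ agree on $F_n$, which is exactly the continuity estimate already used in the proof of the Curtis-Hedlund-Lyndon theorem.)

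\emph{Second step: comparing $F_n S$ with $F_n$.} Taking logarithms and dividing by $|F_n|$, I would write
\[
\frac{\log |\tau(X)_{F_n}|}{|F_n|} \leq \frac{\log |X_{F_n S}|}{|F_n|}
= \frac{\log |X_{F_n S}|}{|F_n S|} \cdot \frac{|F_n S|}{|F_n|}.
\]
The plan now is to control each of the two factors on the right. Since $S$ is finite, $|F_n S| \leq |F_n|\,|S|$, so the sequence $|F_n S|/|F_n|$ is bounded; in fact the Følner condition~\eqref{e:folner-s-2} forces $|F_n S \setminus F_n|/|F_n| \to 0$ for each of the finitely many $s \in S$, whence
\[
\frac{|F_n S|}{|F_n|} \longrightarrow 1.
\]
The other factor is $\log|X_{F_n S}|/|F_n S|$; its $\limsup$ is an entropy computed along the sets $(F_n S)_{n \in \N}$, and the last thing to check is that this equals $h_\FF(X)$.

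\emph{The main obstacle} is precisely this last point: $(F_n S)_{n}$ need not literally be the chosen Følner sequence $\FF$, so I cannot directly read off $h_\FF(X)$ from it. I expect this to be the crux of the argument. The cleanest way around it is to avoid splitting the fraction at all: since $F_n \subset F_n S$ (as $1_G \in S$, which we may assume), we have $|X_{F_n S}| \leq |A|^{|F_n S \setminus F_n|} \cdot |X_{F_n}|$, because a pattern on $F_n S$ is determined by its restriction to $F_n$ together with at most $|F_n S \setminus F_n|$ further symbols. Taking $\log$ and dividing by $|F_n|$ yields
\[
\frac{\log|\tau(X)_{F_n}|}{|F_n|} \leq \frac{|F_n S \setminus F_n|}{|F_n|}\log|A| + \frac{\log|X_{F_n}|}{|F_n|},
\]
and since the first term tends to $0$ by the Følner property, passing to the $\limsup$ gives $h_\FF(\tau(X)) \leq h_\FF(X)$, as desired. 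This reformulation sidesteps the need to compare entropies along different sequences and is the step I would be most careful to get right.
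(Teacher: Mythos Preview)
Your proposal is correct and, after the exploratory detour in the second step, lands on exactly the argument the paper uses: assume $1_G \in S$, bound $|X_{F_n S}|$ by $|X_{F_n}| \cdot |A|^{|F_n S \setminus F_n|}$ via the restriction $X_{F_n S} \hookrightarrow X_{F_n} \times A^{F_n S \setminus F_n}$, and then use the F{\o}lner condition (packaged in the paper as Lemma~\ref{l:folner-riht-fset}) to kill the boundary term before taking the $\limsup$. The only difference is presentational: the paper goes straight to this bound without the intermediate attempt at splitting the fraction.
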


For the proof, we shall use the following general property of F\o lner sequences.

\begin{lemma}
\label{l:folner-riht-fset}
Let $G$ be a countable amenable group with F\o lner sequence $\FF = (F_n)_{n \in \N}$ and
let $S$ be a finite subset of $G$.
Then one has
\begin{equation}
\label{e:folner-riht-fset}
\lim_{n \to \infty} \frac{|F_n S \setminus F_n|}{|F_n|} = 0.  
\end{equation}
\end{lemma}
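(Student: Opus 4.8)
The plan is to reduce the statement about the product set $F_n S$ to the single-translate Følner condition already recorded in \eqref{e:folner-s-2}. The starting observation is that the product of a set with a finite set is a finite union of right translates: since $S$ is finite we may write
\[
F_n S = \bigcup_{s \in S} F_n s.
\]
From this, every element of $F_n S \setminus F_n$ lies in $F_n s$ for some $s \in S$ but not in $F_n$, so it belongs to $F_n s \setminus F_n$ for that same $s$. This yields the inclusion
\[
F_n S \setminus F_n \subset \bigcup_{s \in S} \left( F_n s \setminus F_n \right),
\]
and hence, by subadditivity of cardinality over a finite union,
\[
|F_n S \setminus F_n| \leq \sum_{s \in S} |F_n s \setminus F_n|.
\]

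Dividing by $|F_n|$ (which is nonzero since the $F_n$ are nonempty), I would obtain
\[
\frac{|F_n S \setminus F_n|}{|F_n|} \leq \sum_{s \in S} \frac{|F_n s \setminus F_n|}{|F_n|}.
\]
Now I would invoke the characterization of the Følner property in the right-translation form \eqref{e:folner-s-2}: for each fixed $s \in S$ one has $\lim_{n \to \infty} |F_n s \setminus F_n|/|F_n| = 0$. Because $S$ is finite, the right-hand side is a sum of \emph{finitely many} sequences each tending to $0$, and a finite sum of null sequences is null. Therefore the left-hand side, being nonnegative and dominated by a sequence tending to $0$, also tends to $0$, which is exactly \eqref{e:folner-riht-fset}.

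I do not expect any genuine obstacle here; the proof is essentially a bookkeeping argument. The only point requiring a moment of care is to use the \emph{right-hand} Følner condition \eqref{e:folner-s-2} rather than the original left-hand definition \eqref{e:folner-s}, which is legitimate precisely because those two conditions were shown to be equivalent in the discussion following the definition of amenability (via $|Fg| = |F|$ and the symmetric-difference identities). The other routine points are that finiteness of $S$ is what makes the union finite and lets the limit pass through the sum, and that $F_n \neq \varnothing$ guarantees the denominators are legitimate.
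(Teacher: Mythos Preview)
Your proof is correct and follows essentially the same route as the paper's own argument: decompose $F_n S$ as the finite union $\bigcup_{s\in S} F_n s$, bound $|F_n S\setminus F_n|$ by $\sum_{s\in S}|F_n s\setminus F_n|$, divide by $|F_n|$, and apply \eqref{e:folner-s-2} term by term. The only cosmetic difference is that the paper records the inclusion you wrote as an equality (which it is), but this changes nothing.
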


\begin{proof}
Observe that
\[
F_n  S \setminus F_n = \bigcup_{s \in S} (F_n  s \setminus F_n)
\]
so that
\[
|F_n  S \setminus F_n| = \left| \bigcup_{s \in S} (F_n  s \setminus F_n)  \right| 
\leq \sum_{s \in S} |F_n  s \setminus F_n|.
\]
Thus we get
\[
\frac{|F_n S \setminus F_n|}{|F_n|}
\leq \sum_{s \in S} \frac{|F_n s \setminus F_n|}{|F_n|}
\]
for all $n \in \N$. As 
\[
\lim_{n \to \infty} \frac{|F_n s \setminus F_n|}{|F_n|} = 0  
\]
for each $s \in S$ by~\eqref{e:folner-s-2}, this gives us~\eqref{e:folner-riht-fset}. 
\end{proof}

\begin{proof}[Proof of Proposition~\ref{p:entropie-image}]
Let us set $Y \coloneqq \tau(X)$ and let $S \subset G$ be a memory set for $\tau$
with $1_G \in S$.
Recall that it immediately follows from~\eqref{def:automate}  that if two configurations coincide on 
$g S$ for  some $g \in G$  then their images by $\tau$ take the same value at $g$.
We deduce  that
\begin{equation} 
|Y_{\Omega}| \leq |X_{\Omega S}|
\end{equation}
 for every finite subset $\Omega \subset G$.
Now observe that
\[
X_{\Omega S} \subset X_\Omega \times X_{\Omega S \setminus \Omega} \subset X_\Omega \times A^{\Omega S \setminus \Omega},
\]
so that we get
\[
\log |Y_\Omega| \leq \log |X_\Omega| +    | \Omega S \setminus \Omega| \cdot \log |A|. 
\]
After replacing $\Omega$ by $F_n$  and dividing both sides by $|F_n|$, this inequality becomes   
 \begin{equation}
 \label{e:ineq-ent-im-Y-X}
 \frac{\log |Y_{F_n}|}{|F_n|}
 \leq  \frac{\log |X_{F_n}|}{|F_n|} +
  \frac{|F_n S \setminus F_n|}{|F_n|}  \cdot \log |A|.
 \end{equation}
As 
\[
\lim_{n \to \infty} \frac{|F_n S \setminus F_n|}{|F_n|} = 0  
\]
by Lemma~\ref{l:folner-riht-fset}, taking the limsup in~\eqref{e:ineq-ent-im-Y-X} finally gives the required inequality $h_\FF(Y) \leq h_\FF(X)$.
\end{proof}

\begin{corollary}
\label{c:amen-nosurj-ca-less-lettters}
Let $G$ be a countable amenable group and let
$A,B$ be  finite sets with $|A| < |B|$.
Then there exists no surjective cellular automaton $\tau \colon A^G \to B^G$. 
\end{corollary}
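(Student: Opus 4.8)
The plan is to argue by contradiction, exploiting the fact just established in Proposition~\ref{p:entropie-image} that a cellular automaton cannot increase entropy. First I would assume that there exists a surjective cellular automaton $\tau \colon A^G \to B^G$ and fix, once and for all, a F\o lner sequence $\FF = (F_n)_{n \in \N}$ for $G$; such a sequence exists precisely because $G$ is amenable. Since $\tau$ is surjective we have $\tau(A^G) = B^G$, so applying Proposition~\ref{p:entropie-image} with $X \coloneqq A^G$ yields
\begin{equation*}
h_\FF(B^G) = h_\FF(\tau(A^G)) \leq h_\FF(A^G).
\end{equation*}

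The remaining step is to evaluate the two entropies explicitly and read off the contradiction. For the full shift $A^G$ one has $(A^G)_{F_n} = A^{F_n}$, hence $|(A^G)_{F_n}| = |A|^{|F_n|}$, so that directly from the definition~\eqref{e:entropy}
\begin{equation*}
h_\FF(A^G) = \limsup_{n \to \infty} \frac{|F_n| \log |A|}{|F_n|} = \log |A|,
\end{equation*}
and identically $h_\FF(B^G) = \log |B|$; this is exactly the equality already recorded immediately after the definition of entropy. Substituting into the displayed inequality gives $\log |B| \leq \log |A|$, that is, $|B| \leq |A|$, which contradicts the hypothesis $|A| < |B|$.

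I expect no serious obstacle here, since the genuine content lies entirely in Proposition~\ref{p:entropie-image}: the corollary is simply the observation that the full shift $B^G$ has strictly larger entropy than $A^G$ whenever $|B| > |A|$, so no entropy-nondecreasing map — and in particular no cellular automaton — can carry the latter onto the former. The only points worth stating carefully are that amenability is what supplies the F\o lner sequence needed to define $h_\FF$, and that the entropy of a full shift is computed by an exact count rather than an inequality, so the strict gap $\log|A| < \log|B|$ is genuinely available to be contradicted.
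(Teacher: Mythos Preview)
Your proof is correct and follows exactly the same approach as the paper: both invoke Proposition~\ref{p:entropie-image} together with the computation $h_\FF(A^G)=\log|A|$ and $h_\FF(B^G)=\log|B|$ to obtain the contradiction. The paper's version is simply terser, stating only the chain of (in)equalities $h_\FF(A^G)=\log|A|<\log|B|=h_\FF(B^G)$ and declaring the result immediate.
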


\begin{proof}
This is an immediate consequence of 
Proposition~\ref{p:entropie-image}
since
\[ 
h_\FF(A^G) = \log |A| < \log |B| = h_\FF(B^G).
\]
\end{proof}

The following example (cf.~\cite[p.~138]{ornstein-weiss})  shows that 
Corollary~\ref{c:amen-nosurj-ca-less-lettters} becomes false if  
 the amenability hypothesis is removed.
 
 \begin{example}
 Let $G$ be the free group on two generators $a$ and $b$.
 Take $A \coloneqq  \Z/2\Z$ and $B \coloneqq  \Z/2\Z \times \Z/2\Z$, so that $|A| = 2$ and $|B| = 4$.
 Consider the map $\tau \colon A^G \to B^G$ defined by
 \[
 \tau(x)(g) = (x(g) + x(g a), x(g) + x(g b))
 \]  
 for all $x \in A^G$ and $g \in G$.
 Observe that $\tau$ is a cellular automaton with memory set $S = \{1_G,a,b\}$ and local defining map
$ \mu \colon A^G \to B$ given by
 \[
 \mu(p) = (p(1_G) + p(a),p(1_G) + p(b))
 \]
  for all $p \in A^S$. It is easy to check that $\tau$ is surjective.
Note that $A^G$ and $B^G$ are totally disconnected compact abelian topological groups and $\tau$ is a continuous group morphism whose kernel consists of the two constant configurations in $A^G$.
 \end{example}

\subsection{Tilings}

Let $G$ be a group. 
Given a finite subset $E\subset G$, let us say that a subset $T \subset G$ is an $E$-\emph{tiling}\index{tiling} of $G$ provided the sets $t E$, $t \in T$, are pairwise disjoint and there exists a finite subset
$E' \subset G$ such that the sets    $t E'$, $t \in T$,  cover $G$.

\begin{example}
Take $G = \Z^d$ and $E = \{0, \pm 1, \pm 2, \ldots, \pm m\}^d$ for some $m \in \N$, then $T: = ((2m+1)\Z)^d  \subset \Z^d$ is an $E$-tiling
(here one can take $E' =  E$).
\end{example}
  
Given  any nonempty finite subset $E$ of a group $G$,
we can use  Zorn's lemma to prove that
there always exists  an $E$-tiling $T \subset G$.
Indeed, consider the set $\mathcal{S}(E)$ consisting of all subsets $S \subset G$ such that the sets $sE$, $s \in S$, are pairwise disjoint.
We first observe that $\mathcal{S}(E)$ 
is nonempty 
since $\{1_G\} \in \mathcal{S}(E)$.
On the other hand, $\mathcal{S}(E)$
is inductive  with respect to set inclusion since  
if ${\mathcal S}' \subset \mathcal{S}(E)$ is a chain, then $M:= \cup_{S \in {\mathcal S}'} S$ belongs to
$\mathcal{S}(E)$ and is an upper bound for ${\mathcal S}'$. By Zorn's lemma, there exists a maximal element $T \in {\mathcal S}(E)$. 
As $T \in {\mathcal S}(E)$,
 the sets $t E$, $t \in T$, are pairwise disjoint. 
 Now, given any  $g \in G$, we can find, by maximality of $T$,  an element $t = t(g) \in T$ such that 
 $g E \cap t E \neq \varnothing$ and hence
$g \in t E E^{-1} $.
It follows that the sets $t E E^{-1}$, $t \in T$, cover $G$. 
Since the set $E' \coloneqq E E^{-1}$ is finite, this shows that $T$ is an $E$-tiling of $G$.

For the proof of the Garden of Eden theorem  in the next subsection,
we shall use some technical results about tilings in amenable groups.

\begin{lemma}
Let $G$ be a countable amenable group with F\o lner sequence  $\FF = (F_n)_{n \in \N}$.
Let $E \subset G$ be a nonempty finite subset and
$T \subset G$ an $E$-tiling. 
Define, for  $n \in \N$, the subset $T_n \subset T$  by
 \[
 T_n \coloneqq \{t \in T: t E \subset F_n\}.
 \]
Then there exist   a constant 
$\alpha = \alpha(\FF,T) >0$ and $n_0 \in \N$ such that
\begin{equation}
\label{e:t-n}
|T_n| \geq \alpha |F_n| \ \ \mbox{ for all } n \geq n_0.
\end{equation}
\end{lemma}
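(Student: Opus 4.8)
The plan is to compare $T_n$ with the slightly larger family of tiles that merely \emph{meet} $F_n$ after expansion by the covering set $E'$, and to show that the two families differ only by a F\o lner-negligible boundary term. Fix a finite set $E' \subset G$ witnessing that $T$ is an $E$-tiling, so that the sets $tE'$, $t \in T$, cover $G$.

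First I would bound $T_n$ from below by a covering argument. Set
\[
T_n' \coloneqq \{t \in T : tE' \cap F_n \neq \varnothing\}.
\]
Since the sets $tE'$ cover $G$, every $g \in F_n$ lies in some $tE'$ with necessarily $t \in T_n'$; hence $F_n \subset \bigcup_{t \in T_n'} tE'$ and therefore
\[
|F_n| \leq \sum_{t \in T_n'} |tE'| = |T_n'| \cdot |E'|,
\]
which gives $|T_n'| \geq |F_n|/|E'|$. Clearly $T_n \subset T_n'$, so it remains to control the discrepancy $T_n' \setminus T_n$.

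The key step, which I expect to be the main obstacle, is to bound $|T_n' \setminus T_n|$ by a boundary term, and the crucial leverage is the \emph{disjointness} of the tiles $tE$. Take $t \in T_n' \setminus T_n$. From $tE' \cap F_n \neq \varnothing$ there is $e' \in E'$ with $te' \in F_n$, while from $tE \not\subset F_n$ there is $e \in E$ with $te \notin F_n$. Writing $te = (te') \cdot (e'^{-1}e)$ and setting $D \coloneqq E'^{-1}E$ (a finite set), we get $te \in F_n D$; since $te \notin F_n$, in fact $te \in F_n D \setminus F_n$. Because the tiles $tE$ are pairwise disjoint, the chosen points $te \in tE$ are distinct for distinct $t$, so $t \mapsto te$ injects $T_n' \setminus T_n$ into $F_n D \setminus F_n$, whence
\[
|T_n' \setminus T_n| \leq |F_n D \setminus F_n|.
\]

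Finally I would invoke Lemma~\ref{l:folner-riht-fset} with $S = D$, giving $\lim_{n\to\infty} |F_n D \setminus F_n|/|F_n| = 0$. Combining the two estimates,
\[
|T_n| = |T_n'| - |T_n' \setminus T_n| \geq \frac{|F_n|}{|E'|} - |F_n D \setminus F_n|.
\]
Choosing $n_0$ so large that $|F_n D \setminus F_n| \leq |F_n|/(2|E'|)$ for all $n \geq n_0$, we conclude $|T_n| \geq |F_n|/(2|E'|)$, so the statement holds with $\alpha \coloneqq 1/(2|E'|)$.
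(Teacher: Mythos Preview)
Your proof is essentially correct and takes a genuinely different route from the paper's. One small oversight: the claim ``Clearly $T_n \subset T_n'$'' is not automatic, since you did not arrange $E \subset E'$; if $t \in T_n$ then $tE \subset F_n$, but this does not force $tE' \cap F_n \neq \varnothing$ unless $E$ and $E'$ share an element. The paper handles this by replacing $E'$ with $E' \cup E$ at the outset. Alternatively, you can simply drop the inclusion claim and the equality $|T_n| = |T_n'| - |T_n' \setminus T_n|$, since the weaker inequality $|T_n| \geq |T_n'| - |T_n' \setminus T_n|$ always holds and is all you actually use.

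The interesting difference is in how the discrepancy $T_n' \setminus T_n$ is controlled. The paper rewrites $T_n^+ = T \cap \bigcup_{g \in E'} F_n g^{-1}$ and $T_n = T \cap \bigcap_{h \in E} F_n h^{-1}$, and bounds $|T_n^+ \setminus T_n|$ by the crude sum $\sum_{g \in E',\, h \in E} |F_n \setminus F_n h^{-1}g|$, appealing directly to the F\o lner condition~\eqref{e:folner-s} for each of the $|E'|\cdot|E|$ terms; the disjointness of the tiles $tE$ plays no role at this step. Your argument instead exploits that disjointness to build an injection $t \mapsto te \in tE$ from $T_n' \setminus T_n$ into the single boundary set $F_n D \setminus F_n$ with $D = E'^{-1}E$, and then invokes Lemma~\ref{l:folner-riht-fset} once. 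This is cleaner and yields a tighter intermediate bound, at the cost of using one extra structural feature of the tiling. Both routes arrive at the same constant $\alpha = 1/(2|E'|)$.
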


\begin{proof}
Since $T$ is a $E$-tiling, there exists a finite subset $E' \subset G$ such that the sets $t E'$, $t \in T$, cover $G$.
After replacing $E'$ by $E' \cup E$, if necessary, we may assume that $E \subset E'$.  
Define, for $n \in \N$,
\[
T_n^+ \coloneqq  \{t \in T: tE' \cap F_n \neq \varnothing\}.
\]
Clearly $T_n \subset T_n^+$.
As the sets $tE'$, $t \in T_n^+$, cover $F_n$, we have
$|F_n| \leq |T_n^+| \cdot |E'|$ so that
\begin{equation}
\label{e:premiere-tiling}
\frac{|T_n^+|}{|F_n|} \geq \frac{1}{|E'|}
\end{equation}
for all $n \in \N$.
Now observe that
\[
T_n^+ = T \cap \left(\bigcup_{g \in E'} F_n g^{-1} \right)
\text{  and  }
T_n = T \cap \left(\bigcap_{h \in E}  F_n h^{-1} \right), 
\]
so that
\begin{align*}
T_n^+ \setminus T_n
&= T \cap \left( \bigcup_{g \in E'} F_n g^{-1}  \setminus \bigcap_{h \in E}  F_n h^{-1}\right)  \\
&\subset   \bigcup_{g \in E'} F_n g^{-1}  \setminus \bigcap_{h \in E}  F_n h^{-1} \\
&= \bigcup_{g \in E', \ h \in E} (F_n g^{-1} \setminus F_n h^{-1}).
\end{align*} \\
We deduce that
\[
|T_n^+ \setminus T_n|
\leq \sum_{g \in E', \ h \in E} |F_n g^{-1}  \setminus F_n h^{-1}|
= \sum_{g \in E', \ h \in E} |F_n   \setminus F_n h^{-1} g|.
\]
As
\[
\lim_{n \to \infty} \frac{|F_n  \setminus F_n h^{-1} g|}{|F_n|} =0
\]
 for all $g \in E'$ and $h \in E$ by~\eqref{e:folner-s},  
 it follows that
\[
\frac{|T_n^+ | - \vert T_n \vert}{|F_n|} = \frac{\vert T_n^+ \setminus T_n \vert}{|F_n|} \to 0
\]
as  $n \to \infty$. 
Using~\eqref{e:premiere-tiling} and taking $\varepsilon \coloneqq \dfrac{1}{2|E'|}$, 
we deduce that  there exists $n_0 \in \N$ such that
\[
\frac{|T_n|}{|F_n|} = \frac{|T_n^+|}{|F_n|} - \frac{\vert T_n^+ \vert - \vert T_n \vert}{|F_n|} \geq  \frac{1}{|E'|} - \varepsilon = \alpha,
\]
where $\alpha \coloneqq \dfrac{1}{2|E'|}$, for all $n \geq n_0$. 
\end{proof}

\begin{proposition}
\label{p:inegalite-entropique-tiling}
Let $G$ be a countable amenable group with F\o lner sequence  $\FF = (F_n)_{n \in \N}$
and let $A$ be a finite set.  
Let $X \subset A^G$ be a subset and suppose there exist a nonempty finite subset $E \subset G$
and an $E$-tiling $T \subset G$ such that $X_{tE} \subsetneqq A^{tE}$ for all $t \in T$. Then $h_\FF(X) < \log |A|$.
\end{proposition}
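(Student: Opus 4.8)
The plan is to bound the number of patterns $|X_{F_n}|$ that $X$ realizes on each F\o lner set $F_n$ by exploiting the tiling $T$: on every tile $tE$ contained in $F_n$ the induced pattern is forced to lie in the proper subset $X_{tE} \subsetneqq A^{tE}$, so it has at most $|A|^{|E|}-1$ possibilities instead of the full $|A|^{|E|}$, while on the coordinates outside the tiles no constraint is imposed. Since the preceding lemma guarantees that a positive proportion of $F_n$ is covered by tiles that sit entirely inside $F_n$, each such tile contributes a fixed multiplicative saving, and this forces the exponential growth rate of $|X_{F_n}|$ to drop strictly below $\log|A|$.

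More precisely, I would first dispose of the trivial case $|A| = 1$ (where the hypothesis forces $X = \varnothing$) and assume $|A| \geq 2$. Set $T_n \coloneqq \{t \in T : tE \subset F_n\}$ and $R_n \coloneqq F_n \setminus \bigcup_{t \in T_n} tE$. Because the tiles $tE$, $t \in T$, are pairwise disjoint, $F_n$ is the disjoint union of $R_n$ together with the sets $tE$, $t \in T_n$, so the restriction map $x|_{F_n} \mapsto \big((x|_{tE})_{t \in T_n}, x|_{R_n}\big)$ is injective on $X_{F_n}$. Every $x \in X$ satisfies $x|_{tE} \in X_{tE}$, and by hypothesis $|X_{tE}| \leq |A^{tE}| - 1 = |A|^{|E|} - 1$, while on $R_n$ we bound the number of possibilities crudely by $|A|^{|R_n|}$. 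This yields
\begin{equation*}
|X_{F_n}| \leq \left(|A|^{|E|} - 1\right)^{|T_n|} \cdot |A|^{|R_n|},
\end{equation*}
where $|R_n| = |F_n| - |T_n|\,|E|$ since each tile has cardinality $|E|$.

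Taking logarithms and dividing by $|F_n|$, I would then rewrite this as
\begin{equation*}
\frac{\log |X_{F_n}|}{|F_n|} \leq \log |A| - c\,\frac{|T_n|}{|F_n|},
\qquad
c \coloneqq \log \frac{|A|^{|E|}}{|A|^{|E|} - 1} > 0,
\end{equation*}
the strict positivity of $c$ being exactly the gain produced by each tile supporting a proper subset. The preceding lemma now supplies a constant $\alpha > 0$ and an index $n_0$ with $|T_n| \geq \alpha |F_n|$ for all $n \geq n_0$, whence $\frac{\log |X_{F_n}|}{|F_n|} \leq \log|A| - c\alpha$ for all such $n$. Passing to the limsup gives $h_\FF(X) \leq \log|A| - c\alpha < \log|A|$, as required.

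The only genuinely delicate point is the uniform density estimate $|T_n| \geq \alpha|F_n|$, but this is precisely the content of the preceding lemma, whose proof rests on the F\o lner property together with the finite set $E'$ (with $E \subset E'$) controlling the tiles meeting the boundary of $F_n$. Granting that lemma, everything else is an elementary partition-and-count argument, and the real crux is just recording that replacing a factor $|A|^{|E|}$ by $|A|^{|E|} - 1$ on a positive fraction of the coordinates of $F_n$ produces a fixed exponential deficit $c\alpha$ that survives the limsup.
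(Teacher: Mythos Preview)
Your proof is correct and follows essentially the same approach as the paper: you set $T_n = \{t \in T : tE \subset F_n\}$, partition $F_n$ into the tiles $tE$ ($t \in T_n$) and the residual set, bound $|X_{F_n}|$ by $(|A|^{|E|}-1)^{|T_n|}|A|^{|R_n|}$, and invoke the preceding lemma to secure $|T_n| \geq \alpha|F_n|$, exactly as the paper does (with your $R_n$ playing the role of the paper's $F_n^*$ and your constant $c$ equal to the paper's $-\log(1-|A|^{-|E|})$). Your explicit treatment of the degenerate case $|A|=1$ is a small addition the paper omits.
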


\begin{proof}
Let us set, as above, $T_n \coloneqq   \{t \in T: tE \subset F_n\}$ and write 
\[
F_n^* \coloneqq F_n \setminus \bigcup_{t \in T_n} t E,
\] 
for all $n \in \N$.
Observe that $\bigcup_{t \in T_n} t E \subset F_n$ so that 
$$
X_{F_n} \subset A^{F_n^*} \times \prod_{t \in T_n} X_{tE}
$$ 
and
\begin{equation}
\label{e:tiling-Fn-star-less}
|F_n| = |F_n^*| + |T_n| \cdot |E|.
\end{equation}
 It follows that
\[
\begin{split}
\log |X_{F_n}| & \leq  \vert F_n^*\vert \cdot \log |A| +  \sum_{t \in T_n} \log |X_{tE}| \\
& \leq  |F_n^*| \cdot \log |A| +  \sum_{t \in T_n}  \log \left(|A^{tE}|-1\right) \\
& = \vert F_n^* \vert \cdot  \log |A| +  |T_n| \cdot  \log (|A|^{|E|} - 1) \\
& = \vert F_n^* \vert \cdot \log |A| +  |T_n| \cdot |E| \cdot \log |A| + |T_n| \cdot \log (1 - |A|^{-|E|})\\
& = \vert F_n \vert \cdot \log |A| + |T_n| \cdot \log (1 - |A|^{-|E|}),
\end{split}
\]
where the last equality follows from~\eqref{e:tiling-Fn-star-less}.
Setting $c\coloneqq - \log (1 - |A|^{-|E|}) > 0$,  
we deduce that
\[
h_\FF(X) = \limsup_{n \to \infty} \frac{\log |X_{F_n}|}{|F_n|}  \leq \log |A| - c \alpha < \log |A|,
\]
where $\alpha = \alpha(\FF,T)$ is as in~\eqref{e:t-n}.
\end{proof}

\begin{corollary}
\label{c:ineq-entropy-subshifts}
Let $G$ be a countable amenable group with F\o lner sequence  $\FF = (F_n)_{n \in \N}$
and let $A$ be a finite set.  
Let $X \subset A^G$ be a subshift 
and suppose that there exists a nonempty finite subset $E \subset G$
such that $X_{E} \subsetneqq A^{E}$.  Then one has $h_\FF(X) < \log |A|$.
\end{corollary}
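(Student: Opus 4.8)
The plan is to reduce the statement to Proposition~\ref{p:inegalite-entropique-tiling} by producing an $E$-tiling $T$ of $G$ on which the hypothesis of that proposition is satisfied. The only real work is to upgrade the single proper inclusion $X_E \subsetneqq A^E$ to proper inclusions $X_{tE} \subsetneqq A^{tE}$ at \emph{every} tile $t$, and for this I would exploit the shift-invariance of $X$.

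First I would observe that, since $X$ is a subshift, the map $x \mapsto t^{-1} x$ is a bijection of $X$ onto itself for every $t \in G$. Fixing $t$, consider the bijection $A^{tE} \to A^E$ sending a pattern $p$ to $t^{-1} p$, where $(t^{-1}p)(h) = p(th)$ for $h \in E$. Under this bijection the restriction $x|_{tE}$ is carried to $(t^{-1}x)|_E$. As $x$ ranges over $X$, so does $t^{-1}x$, whence the image of $X_{tE}$ under the bijection is exactly $X_E$. In particular $|X_{tE}| = |X_E| < |A|^{|E|} = |A^{tE}|$, so that $X_{tE} \subsetneqq A^{tE}$ for every $t \in G$.

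Next I would invoke the existence of tilings established above: for the given nonempty finite set $E$ there exists, by the Zorn's lemma argument preceding Proposition~\ref{p:inegalite-entropique-tiling}, an $E$-tiling $T \subset G$. By the previous paragraph, $X_{tE} \subsetneqq A^{tE}$ holds for every $t \in T$ (indeed for every $t \in G$), so all the hypotheses of Proposition~\ref{p:inegalite-entropique-tiling} are met with this $E$ and this $T$. Applying that proposition gives $h_\FF(X) < \log |A|$, as required.

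I do not expect a genuine obstacle here: the entire analytic content, namely the tiling estimate together with the F\o lner bookkeeping, is already packaged inside Proposition~\ref{p:inegalite-entropique-tiling}. The one point deserving a little care is the direction of the group action in the bijection $p \mapsto t^{-1}p$, so that the equality $\{(t^{-1}x)|_E : x \in X\} = X_E$ really follows from invariance of $X$, and hence $X_{tE} \subsetneqq A^{tE}$ is a genuine \emph{proper} inclusion at each tile rather than the trivial inclusion $X_{tE} \subset A^{tE}$.
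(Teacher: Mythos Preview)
Your proposal is correct and follows essentially the same approach as the paper: pick an $E$-tiling $T$, use shift-invariance of $X$ to promote $X_E \subsetneqq A^E$ to $X_{tE} \subsetneqq A^{tE}$ for every $t \in T$, and then apply Proposition~\ref{p:inegalite-entropique-tiling}. The paper's proof is a one-line version of what you wrote; your explicit verification via the bijection $p \mapsto t^{-1}p$ just unpacks the phrase ``by shift-invariance of $X$''.
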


\begin{proof}
If $T$ is an $E$-tiling of $G$,
we deduce from the shift-invariance of $X$
that
$X_{t E} \subsetneqq A^{t E}$ for all $t \in T$, so that Proposition~\ref{p:inegalite-entropique-tiling} applies. 
\end{proof}

\subsection{The Garden of Eden theorem for amenable groups}
The following result is due to Mach{\`{\i}},  Scarabotti, and the first author~\cite{ceccherini}.
Since the groups $\Z^d$ are all amenable,
it extends Theorem~\ref{t:GOE-zd}. 

\begin{theorem}
\label{t:GOE-amen}\index{Garden of Eden theorem!--- (for amenable groups)}\index{theorem!Garden of Eden --- (for amenable groups)}
Let $G$ be a countable amenable group with F\o lner sequence   
$\FF = (F_n)_{n \in \N}$ and $A$ a finite set.
Suppose that $\tau \colon A^G \to A^G$ is a cellular automaton. Then the following conditions are
equivalent:
\begin{enumerate}[{\rm (a)}]
\item $\tau$ is surjective;
\item $h_\FF(\tau(A^G)) = \log |A|$;
\item $\tau$ is pre-injective.
\end{enumerate}
\end{theorem}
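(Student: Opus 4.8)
The plan is to establish the two equivalences (a)$\iff$(b) and (b)$\iff$(c) separately, using entropy as the bridge between surjectivity and pre-injectivity. Throughout I would write $Y \coloneqq \tau(A^G)$ and use that $Y$ is itself a subshift of $A^G$: it is shift-invariant because $\tau$ is equivariant and closed because $A^G$ is compact and $\tau$ is continuous (Theorem~\ref{t:curtis-hedlund-lyndon}). The implication (a)$\Rightarrow$(b) is immediate, since surjectivity gives $Y = A^G$ and hence $h_\FF(Y) = h_\FF(A^G) = \log|A|$. For (b)$\Rightarrow$(a) I would argue by contraposition: if $\tau$ is not surjective, then by Proposition~\ref{p:surj-equiv-no-goe} there is a GOE pattern $p$, and with $E \coloneqq \supp(p)$ one has $Y_E \subsetneqq A^E$; since $Y$ is a subshift, Corollary~\ref{c:ineq-entropy-subshifts} gives $h_\FF(Y) < \log|A|$, contradicting (b). This settles (a)$\iff$(b).

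For (b)$\Rightarrow$(c), the Moore direction in contrapositive form, I would show that failure of pre-injectivity forces $h_\FF(Y) < \log|A|$. If $\tau$ is not pre-injective, Proposition~\ref{p:MEP-preinjectivity} yields two distinct ME patterns $p_1 \ne p_2$ with common support $\Omega$. I would fix an $\Omega$-tiling $T \subset G$ (whose existence was established via Zorn's lemma) and set $Y' \subset A^G$ to be the set of configurations $x$ with $x|_{t\Omega} \ne t p_1$ for every $t \in T$. The key claim is $\tau(A^G) = \tau(Y')$: given any $x$, replace its restriction on each tile $t\Omega$ where $x|_{t\Omega} = t p_1$ by $t p_2$; the resulting $x'$ lies in $Y'$, and $\tau(x') = \tau(x)$ because each individual tile-swap leaves the image unchanged (the translated patterns $t p_1, t p_2$ are again ME). Since $Y'_{t\Omega} \subseteq A^{t\Omega}\setminus\{tp_1\} \subsetneqq A^{t\Omega}$ for all $t \in T$, Proposition~\ref{p:inegalite-entropique-tiling} gives $h_\FF(Y') < \log|A|$, and Proposition~\ref{p:entropie-image} then yields $h_\FF(Y) = h_\FF(\tau(Y')) \le h_\FF(Y') < \log|A|$.

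For (c)$\Rightarrow$(b), the Myhill direction, I would exploit pre-injectivity to lower-bound the image entropy. Let $S$ be a memory set for $\tau$ with $1_G \in S$, and for each $n$ let $X_n$ be the set of configurations agreeing with a fixed constant configuration off $F_n$. Any two distinct elements of $X_n$ are almost equal, so pre-injectivity makes $\tau|_{X_n}$ injective, whence $|\tau(X_n)| = |X_n| = |A|^{|F_n|}$. Since the elements of $X_n$ agree outside $F_n$, their images agree outside $W_n \coloneqq F_n S^{-1}$, so $|Y_{W_n}| \ge |\tau(X_n)| = |A|^{|F_n|}$, while on the other hand $|Y_{W_n}| \le |A|^{|W_n \setminus F_n|}\,|Y_{F_n}|$. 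Combining these and dividing the logarithm by $|F_n|$ gives
\[
\frac{\log|Y_{F_n}|}{|F_n|} \ge \log|A|\left(1 - \frac{|F_n S^{-1}\setminus F_n|}{|F_n|}\right).
\]
By Lemma~\ref{l:folner-riht-fset} applied to $S^{-1}$, the last fraction tends to $0$, so $h_\FF(Y) \ge \log|A|$; as the reverse inequality always holds, $h_\FF(Y) = \log|A|$, which is (b). Together with the previous step this gives (b)$\iff$(c) and completes the proof.

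The step I expect to be most delicate is the identity $\tau(A^G) = \tau(Y')$ in the Moore direction, since the normalization replaces $p_1$ by $p_2$ on possibly infinitely many tiles at once. To justify $\tau(x') = \tau(x)$ pointwise, I would fix $g \in G$ and note that $\tau(x')(g)$ depends only on $x'|_{gS}$, which meets only finitely many tiles; performing just those finitely many swaps produces a configuration agreeing with $x'$ on $gS$ and obtained from $x$ by finitely many image-preserving ME-swaps, so $\tau(x')(g) = \tau(x)(g)$.
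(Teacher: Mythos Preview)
Your proof is correct and follows essentially the same entropy-based strategy as the paper: (a)$\iff$(b) via Corollary~\ref{c:ineq-entropy-subshifts}, the Moore direction via an ME-pattern tiling argument combined with Propositions~\ref{p:inegalite-entropique-tiling} and~\ref{p:entropie-image}, and the Myhill direction via a counting argument on configurations supported in $F_n$. The only noteworthy differences are in execution: for (b)$\Rightarrow$(c) the paper tiles by the enlarged set $E = \Omega S^{-1}S$ so that each window $gS$ meets at most one tile (making the verification of $\tau(z)=\tau(x)$ a single ME swap), whereas you tile by $\Omega$ itself and reduce the infinite-swap replacement to finitely many ME swaps locally---both are valid; and for (c)$\Rightarrow$(b) you argue directly by lower-bounding $h_\FF(Y)$, while the paper phrases the same inequality contrapositively via the pigeonhole principle.
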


\begin{proof}
The implication (a) $\implies$ (b) is obvious  since $h_\FF(A^G) = \log|A|$.
\par
In order to show the converse implication, let us suppose that $\tau$ is not surjective, that is,
the image subshift $X \coloneqq \tau(A^G)$ is such that  $X \subsetneqq A^G$.
Since $X$ is closed in $A^G$,
there exists  a finite subset $E \subset G$ such that
$X_E \subsetneqq A^E$.
By applying Corollary~\ref{c:ineq-entropy-subshifts},
we deduce that
$h_\FF(X)  < \log|A|$. This shows (b) $\implies$ (a).
\par
Let $S \subset G$ be a memory set for $\tau$ such that $1_G \in S$.
\par
Let us  show (b) $\implies$ (c). Suppose that $\tau$ is not pre-injective. By virtue of
Proposition~\ref{p:MEP-preinjectivity}, we can find
a nonempty finite subset $\Omega \subset G$ and two distinct patterns $p_1, p_2 \in A^\Omega$
that are mutually erasable for $\tau$.
Let $E \coloneqq  \Omega S^{-1}S$.
Observe that $\Omega \subset E$ since $1_G \in S$.
Let  $T \subset G$ be an $E$-tiling of $G$.
Consider the subset $Z \subset A^G$ defined by
\[
Z\coloneqq \{z \in A^{G}: z\vert_{t \Omega} \neq tp_1 \mbox{ for all } t \in T\}.
\]
Observe that $Z_{tE} \subsetneqq A^{t E}$ for all $t \in T$.
By using Proposition~\ref{p:entropie-image} and Proposition~\ref{p:inegalite-entropique-tiling}, we deduce that $h_\FF(\tau(Z))  \leq h_\FF(Z) < \log |A|$.
We claim that $\tau(Z) = \tau(A^G)$. Let $x \in A^G$. Let $T_x\coloneqq \{t \in T: x\vert_{t\Omega} = tp_1\}$ and define
$z \in Z$ by setting, for all $g \in G$,
\[
z(g) \coloneqq
\begin{cases}
tp_2(g) & \mbox{ if $g \in t\Omega$ for some $t \in T_x$}\\
x(g) & \mbox{ otherwise.}
\end{cases}
\]
Let us check that $\tau(z) = \tau(x)$. Let $g \in G$.
If $g \notin \cup_{t \in T_x} t\Omega S^{-1}$, then $gS \cap t\Omega = \varnothing$ for all $t \in T_x$ and therefore $z\vert_{gS} =
x\vert_{gS}$, so that $\tau(z)(g) = \tau(x)(g)$.
Suppose now that $g \in t\Omega S^{-1}$ for some (unique) $t = t(g) \in T_x$ and consider the
configuration $y \in A^G$ defined by setting, for all $h \in G$,
\[
y(h) \coloneqq
\begin{cases}
tp_2(h) & \mbox{ if } h \in t\Omega\\
x(h) & \mbox{ otherwise.}
\end{cases}
\]
Observe that $x\vert_{G \setminus t\Omega} = y\vert_{G \setminus t\Omega}$. Since the patterns $x\vert_{t\Omega} = tp_1$ and $y\vert_{t\Omega} = tp_2$ are mutually erasable, we deduce that $\tau(y) = \tau(x)$. Moreover, as $gS \subset  t\Omega S^{-1}S = tE$, we have $z\vert_{gS} = y\vert_{gS}$, and therefore
$\tau(z)(g) = \tau(y)(g) = \tau(x)(g)$.
This shows that $\tau(z) = \tau(x)$, and the claim follows.
We conclude that $h_\FF(\tau(A^G)) = h_\FF(\tau(Z)) < \log |A|$. This shows the implication (b) 
$\implies$ (c).
\par
Finally, let us show  (c) $\implies$ (b).
Let us set as above $X \coloneqq  \tau(A^G)$ and suppose that $h_\FF(X) < \log |A|$.
As $1_G \in S$, we have $F_n \subset F_n S^{-1}$ so that
$$
X_{F_n S^{-1}} \subset X_{F_n} \times A^{F_n S^{-1} \setminus F_n},
$$
for all $n \in \N$.
We deduce that
\begin{equation}
\label{e:inegalite-X-F-nS}
\frac{\log |X_{F_n S^{-1}}|}{|F_n|} \leq
\frac {\log |X_{F_n}|}{|F_n|}
+  \frac{|F_n S^{-1} \setminus F_n|}{|F_n|} \log |A|.
\end{equation}
As
$$
\lim_{n \to \infty} \frac{|F_n S^{-1} \setminus F_n|}{|F_n|} =0
$$
by Lemma~\ref{l:folner-riht-fset},
we deduce from~\eqref{e:inegalite-X-F-nS} that
\[
\limsup_{n \to \infty} \frac{\log |X_{F_n S^{-1}}|}{|F_n|} \leq
\limsup_{n \to \infty} \frac {\log |X_{F_n}|}{|F_n|}
= h_\FF(X) < \log |A|.
\]
Consequently, we can find $n_0 \in \N$ such that,
\begin{equation}
\label{e:X-F-nS-A}
|X_{F_{n_0} S^{-1}}| < |A|^{|F_{n_0}|}.
\end{equation}
Fix $a_0 \in A$ and consider the subset $Z \subset A^G$ defined by
\[
Z\coloneqq \{z \in A^G: z(g) = a_0 \mbox{ for all } g \in G \setminus F_{n_0}\}.
\]
Note that $|Z| = |A|^{|F_{n_0}|}$.
Let $z_1, z_2 \in Z$.
If $g \in G \setminus F_{n_0} S^{-1}$, then $z_1$ and $z_2$ coincide on $gS \subset G \setminus F_{n_0}$ so that $\tau(z_1)(g) = \tau(z_2)(g)$.
Therefore $\tau(z_1)$ and $\tau(z_2)$ coincide on $G \setminus F_{n_0}S^{-1}$.
This implies  that  $|\tau(Z)| \leq |X_{F_{n_0} S^{-1}}|$.
Using~\eqref{e:X-F-nS-A},
we deduce that $|\tau(Z)| < |Z|$.
By the pigeon-hole principle, there exist two distinct elements $z_1, z_2 \in Z$ such that
$\tau(z_1) = \tau(z_2)$.
As all elements in $Z$ are almost equal (they coincide outside of the finite set $F_{n_0}$),
we conclude that $\tau$ is not pre-injective.
\end{proof}

\section{Failure of the Garden of Eden theorem for nonamenable groups}
\label{sec:goe-fails-non-amen}

Let us say that a countable group $G$ has the \emph{Moore property}\index{Moore property!--- for a group}\index{group!--- satisfying the Moore property}  if 
every surjective cellular automaton $\tau \colon A^G \to A^G$ with finite alphabet $A$ over $G$ is 
pre-injective
and that it has the \emph{Myhill property}\index{Myhill property!--- for a group}\index{group!--- satisfying the Myhill property}  if
every pre-injective cellular automaton $\tau \colon A^G \to A^G$ with finite alphabet $A$ over $G$ is surjective.
Also let us  say that a countable group $G$ satisfies the \emph{Moore-Myhill property}\index{Moore-Myhill property!--- for a group}\index{group!--- satisfying the Moore-Myhill property} 
or that it satisfies the \emph{Garden of Eden theorem}\index{Garden of Eden theorem!group satisfying the ---}\index{group!--- satisfying the Garden of Eden theorem} 
  if $G$ has both the Moore and the Myhill properties.
Theorem~\ref{t:GOE-amen}
tells us that every countable amenable group satisfies the Garden of Eden theorem.
The examples below, essentially due to Muller~\cite{muller}\index{Muller, David E.}
(see also \cite[Section~6]{machi-mignosi}, \cite[Section~6]{ceccherini}, \cite[Chapter~5]{book}), 
show that neither the Moore nor the Myhill property holds for countable groups containing  nonabelian free subgroups.

\begin{example}
\label{e;ex1}
 Let $G$ be a countable group and suppose that $G$ contains two elements $a$ and $b$ generating a nonabelian free
subgroup $H \subset G$. 
Take $A = \{0,1\}$ and let $S \coloneqq \{a,a^{-1},b,b^{-1}\}$.  
 Consider the
cellular automaton $\tau \colon A^G \to A^G$ 
with memory set $\{1_G\} \cup S$
defined by
\[
\tau(x)(g) \coloneqq
\begin{cases}
0 & \text{ if } x(g) + x(ga) + x(ga^{-1}) + x(gb) + x(gb^{-1}) \leq 2 \\
1 & \text{ otherwise}
\end{cases}
\]
for all $x \in A^G$ and $g \in G$.
\par
The pair of configurations $(x_1, x_2) \in A^G \times A^G$, defined by $x_1(g) = 0$ for all $g \in G$, and $x_2(g) = 0$ for all $g \in G \setminus \{1_G\}$ and $x_2(1_G) = 1$, is a diamond for $\tau$. 
Therefore $\tau$ is not pre-injective.
However, $\tau$ is surjective.  To see this, let $y \in A^G$. Let us show that there exists $x \in A^G$ such that
$\tau(x) = y$. 
Let $R \subset G$ be a complete set of representatives of the left cosets of $H$ in $G$.
We define $x$ as follows.
Every element $g \in G$ can be uniquely written in the form $g = r h$ with $r \in R$ and $h \in H$. 
If $g \in R$, i.e., $h = 1_G$, we  set $x(g) \coloneqq 0$. 
Otherwise,
we set $x(g) \coloneqq y(rh^-)$,
where $h^-$ is the \emph{predecessor} of $h$ in $H$, i.e.,  the unique element  $h^- \in H$
such that $\ell_S(h^-) = \ell_S(h) - 1$ and $h = h^- s$ for some $s \in S$
(here $\ell_S(\cdot)$ denotes the length
of the reduced form for elements of $H$, see~Example~\ref{ex:F2-nonamen}).
One easily checks that  $\tau(x) = y$. This shows that $\tau$ is surjective.
Thus the Moore implication fails to hold for  groups containing nonabelian free subgroups.
\end{example}

\begin{example}
\label{e;ex2}
 Let $G$ be a countable group and suppose that $G$ contains two elements $a$ and $b$ generating a nonabelian free
subgroup $H \subset G$. 
Let $A = \Z/2\Z \times \Z/2\Z$ be the Klein four-group
 and consider the group endomorphisms $p$ and $q$ of $A$
respectively defined by $p(\alpha,\beta) = (\alpha, 0)$ and $q(\alpha,\beta) = (\beta, 0)$ for all  $(\alpha,\beta) \in A$.
Let $\tau \colon A^G \to A^G$ be the cellular automaton with memory set $S \coloneqq \{a,a^{-1}, b, b^{-1}\}$ defined by
\[
\tau(x)(g) = p(x(ga)) + p(x(ga^{-1}) + q(x(gb)) +  q(x(gb^{-1}))
\]
for all $x \in A^G$ and $g \in G$.
The image of $\tau$ is contained in $(\Z/2\Z \times \{0\})^G$. Therefore $\tau$ is not surjective.
We claim that $\tau$ is pre-injective. As $\tau$ is a group endomorphism of $A^G$, it suffices to show that there is no configuration with finite support in the kernel of $\tau$.
Assume on the contrary  that there is an element $x \in A^G$ with nonempty finite support 
$\Omega \coloneqq \{g \in G : x(g) \not= 0_A \} \subset G$ such that $\tau(x) = 0$.
Let $R \subset G$ be a complete set of representatives of the left cosets of $H$ in $G$.
Let us set 
$\Omega_r \coloneqq \Omega \cap rH$ for all $r \in R$.
Then $\Omega$ is the disjoint union of the sets $\Omega_r$, $r \in R$.
Let $r \in R$ such that $\Omega_r \neq \varnothing$
and consider an element $g = r h \in \Omega_r$ with $h \in H$ 
at maximal distance from the identity in the Cayley graph of $(H,S)$ (i.e., with $\ell_S(h)$ maximal).
We have $x(g) = (\alpha,\beta) \not= (0,0) = 0_A$.
Suppose first that $\alpha \not= 0$.
We can find $s \in \{a,a^{-1}\}$ such that $\ell_S(h s) = \ell_S(h) + 1$.
For all $t \in S \setminus \{s^{-1}\}$,
we have that  $\ell_S(h s t) = \ell(h) + 2$ and hence
$x(g s t) = 0_A$ by maximality.
It follows that
\[
\tau(x)(g s) = p(x(g ) )= (\alpha,0) \not= 0_A,
\]
which contradicts the fact that $x$ is in the kernel of $\tau$.
Suppose now that $\alpha = 0$.
Then $\beta \not= 0$.
We take now $s \in \{b,b^{-1}\}$ such that
$\ell_S(h s) = \ell_S(h) + 1$.
By an argument similar to the one that we used in the first case, we get
\[
\tau(x)(g s) = q(x(g ) )= (\beta,0) \not= 0_A,
\]
so that we  arrive at a contradiction also in this case.
Thus $\tau$ is pre-injective.
This shows that the Myhill implication fails to hold for  groups containing nonabelian free subgroups.
\end{example}

As mentioned in Subsection~\ref{ss:amenable-groups},
there are nonamenable countable groups containing no nonabelian free subgroups.
However, Bartholdi~\cite{bartholdi}\index{Bartholdi, Laurent} (see \cite[Chapter~5]{book}) proved that
the Moore property fails to hold for all nonamenable countable groups.
Recently, Bartholdi 
and Kielak~\cite{bartholdi:2016} also proved that the Myhill property fails to hold for all nonamenable countable groups.    
Combining these results with the Garden of Eden theorem for amenable groups 
(Theorem~\ref{t:GOE-amen}),
this yields the following characterization of amenability in terms of cellular automata. 

\begin{theorem}
Let $G$ be a countable group.
Then the following conditions are equivalent:
\begin{enumerate}[\rm (a)]
\item
$G$ is amenable;
\item
$G$ has the Moore property;
\item
$G$ has the Myhill property;
\item
$G$ satisfies the Garden of Eden theorem.
\end{enumerate}
\end{theorem}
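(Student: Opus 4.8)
The plan is to assemble the equivalence from results already in hand, since the genuine analytic content has been isolated in Theorem~\ref{t:GOE-amen} and in the quoted theorems of Bartholdi and of Bartholdi and Kielak. I would run a short cycle of implications supplemented by two splittings that are immediate from the definitions.

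First I would record (a) $\implies$ (d): this is exactly Theorem~\ref{t:GOE-amen}, which asserts that a countable amenable group satisfies both the Moore and the Myhill implications and hence the full Garden of Eden theorem. Next, (d) $\implies$ (b) and (d) $\implies$ (c) follow at once from the definition of the Moore-Myhill property as the conjunction of the Moore property and the Myhill property: a group satisfying the Garden of Eden theorem has, in particular, each of the two individual properties.

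The two implications carrying the real weight are (b) $\implies$ (a) and (c) $\implies$ (a), and these are handed to us by the deep failure results discussed above. Bartholdi's theorem~\cite{bartholdi} asserts that the Moore property fails for \emph{every} nonamenable countable group; its contrapositive is precisely (b) $\implies$ (a). Symmetrically, the theorem of Bartholdi and Kielak~\cite{bartholdi:2016} asserts that the Myhill property fails for every nonamenable countable group, whose contrapositive is exactly (c) $\implies$ (a).

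Chaining these together one obtains the loops (a) $\implies$ (d) $\implies$ (b) $\implies$ (a) and (a) $\implies$ (d) $\implies$ (c) $\implies$ (a), so that conditions (a)--(d) are mutually equivalent. The genuine obstacle, were one to insist on a fully self-contained argument, is supplying (b) $\implies$ (a) and (c) $\implies$ (a): one must produce, over an \emph{arbitrary} nonamenable group, a surjective cellular automaton that is not pre-injective (Bartholdi) together with a pre-injective cellular automaton that is not surjective (Bartholdi and Kielak). The subtlety is that such a group need not contain a nonabelian free subgroup, so the explicit constructions of Examples~\ref{e;ex1} and~\ref{e;ex2} do not apply directly; instead these results exploit the failure of the F\o lner condition characterizing nonamenability and are considerably harder than the amenable case treated here. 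In the present survey they are invoked as black boxes.
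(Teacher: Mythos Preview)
Your proposal is correct and matches the paper's own treatment: the theorem is stated as an immediate consequence of Theorem~\ref{t:GOE-amen} together with the quoted results of Bartholdi~\cite{bartholdi} and Bartholdi--Kielak~\cite{bartholdi:2016}, with no further argument given. Your explicit cycle of implications and the remark that Examples~\ref{e;ex1} and~\ref{e;ex2} do not suffice for arbitrary nonamenable groups are entirely appropriate elaborations of what the paper leaves implicit.
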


\section{The Garden of Eden theorem for subshifts}
\label{sec:goe-subshifts}

\subsection{Strongly irreducible subshifts}
Let $G$ be a countable group and $A$ a finite set.
\par
A subshift $X \subset A^G$ is called \emph{strongly irreducible}\index{strongly irreducible subshift}\index{subshift!strongly irreducible ---} if there is a finite subset 
 $\Delta \subset G$ satisfying the following property:
 if $\Omega_1$ and $\Omega_2$   are finite subsets of $G$ such that  $\Omega_1 \Delta$ does not meet   $\Omega_2$,  then, given any two configurations $x_1,x_2 \in X$, there exists  a configuration $x \in X$ which coincides with $x_1$ on $\Omega_1$ and with $x_2$ on $\Omega_2$.

\begin{example}
The full shift $A^G$ is strongly irreducible (one can take $\Delta = \{1_G\}$).
\end{example}

\begin{example}
The even subshift $X \subset \{0,1\}^{\Z}$, described in Example~\ref{ex:even-subshift}, is strongly irreducible
(one can take $\Delta = \{-2,-1,0,1,2\}$).\index{even subshift}\index{subshift!even ---}
\end{example}

\begin{example}
The hard-ball model,  described in Example~\ref{ex:hard-ball}, is strongly irreducible
(one can take $\Delta = \{0, \pm e_1, \dots,\pm e_d\}$).\index{hard-ball model}\index{subshift!hard-ball model ---}
In particular  ($d = 1$), the golden mean subshift is strongly irreducible.
\end{example}

\begin{example}
The Ledrappier subshift, described in Example~\ref{ex:ledrappier-sub}, 
is not strongly irreducible.\index{Ledrappier subshift}\index{subshift!Ledrappier ---}
\end{example}

Fiorenzi~\cite[Theorem~4.7]{fiorenzi-strongly} obtained the following extension of Theorem~\ref{t:GOE-amen}.

\begin{theorem}
\label{t:fiorenzi-strongly}
Let $G$ be a countable amenable group with F\o lner sequence   
$\FF = (F_n)_{n \in \N}$ and $A,B$  finite sets.
Suppose that $X \subset A^G$ is a strongly irreducible subshift of finite type and $Y \subset B^G$ is a strongly irreducible subshift   with $h_\FF(X) = h_\FF(Y)$ and
that $\tau \colon X \to Y$ is a cellular automaton. Then the following conditions are
equivalent:
\begin{enumerate}[{\rm (a)}]
\item $\tau$ is surjective;
\item 
$h_\FF(\tau(X)) = h_\FF(Y)$;
\item $\tau$ is pre-injective.
\end{enumerate}
\end{theorem}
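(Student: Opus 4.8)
The plan is to reproduce the architecture of the proof of Theorem~\ref{t:GOE-amen}, replacing the constant $\log|A| = h_\FF(A^G)$ by the common value $h_\FF(X) = h_\FF(Y)$, and replacing the coordinate-wise independence of the full shift (which made the counting and splicing arguments transparent) by the approximate independence furnished by strong irreducibility. Everything rests on one new ingredient: a strongly irreducible subshift is \emph{entropy minimal}, i.e.\ if $W \subset C^G$ is strongly irreducible and $Z \subseteq W$ is a subshift with $Z \neq W$, then $h_\FF(Z) < h_\FF(W)$. I would prove this by adapting Proposition~\ref{p:inegalite-entropique-tiling} and Corollary~\ref{c:ineq-entropy-subshifts}: choose a pattern appearing in $W$ but not in $Z$, with support $\Lambda$; let $\Delta$ be a strong irreducibility constant for $W$; take an $E$-tiling $T$ with $E \coloneqq \Lambda \Delta$, so that strong irreducibility makes the restrictions to the separated copies $t\Lambda$ essentially independent; and compare $|Z_{F_n}|$ with $|W_{F_n}|$. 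Forbidding the chosen pattern on each of the $\geq \alpha |F_n|$ tiles sitting inside $F_n$ (by the tiling bound~\eqref{e:t-n}) costs a definite multiplicative factor per tile, producing the strict entropy drop.

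Granting this, the equivalence (a)$\iff$(b) is immediate. Since $\tau$ is equivariant and continuous (Theorem~\ref{t:curtis-hedlund-lyndon}) and $X$ is compact, $\tau(X)$ is a subshift contained in $Y$. If $\tau$ is surjective then $\tau(X) = Y$ and (b) holds trivially; conversely, if $\tau$ is not surjective then $\tau(X) \subsetneq Y$, and entropy minimality applied to $W = Y$ gives $h_\FF(\tau(X)) < h_\FF(Y)$, so (b) fails. For the Moore direction (b)$\implies$(c) I argue by contraposition, exactly as in Theorem~\ref{t:GOE-amen}. If $\tau$ is not pre-injective then, by Proposition~\ref{p:MEP-preinjectivity}, there are distinct mutually erasable patterns $p_1, p_2$ with common support $\Omega$; enlarging $\Omega$ I may assume it absorbs a buffer exceeding both a defining window for the subshift of finite type $X$ and a memory set $S$ of $\tau$. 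Fixing an $E$-tiling $T$ with $E$ a suitable enlargement of $\Omega$, I set $Z \coloneqq \{z \in X : z|_{t\Omega} \neq tp_1 \text{ for all } t \in T\}$. Since $tp_1$ does appear in $X$, one has $Z_{tE} \subsetneq X_{tE}$ for all $t$, so the relative form of the tiling inequality (the same entropy-minimality machinery, now carried out inside $X$) yields $h_\FF(Z) < h_\FF(X)$; meanwhile the mutual erasability of $p_1, p_2$ together with the splicing construction of Theorem~\ref{t:GOE-amen} — replace each occurrence $x|_{t\Omega} = tp_1$ by $tp_2$ — gives $\tau(Z) = \tau(X)$. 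Combining with Proposition~\ref{p:entropie-image}, $h_\FF(\tau(X)) = h_\FF(\tau(Z)) \leq h_\FF(Z) < h_\FF(X) = h_\FF(Y)$, so (b) fails.

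For the Myhill direction (c)$\implies$(b) I again argue by contraposition, mirroring the final part of Theorem~\ref{t:GOE-amen}. Assume $h_\FF(\tau(X)) < h_\FF(X)$. Fixing a reference configuration and a large Følner set $F_n$, strong irreducibility of $X$ lets me freely prescribe values on an interior copy of $F_n$ separated from the exterior by a $\Delta$-buffer, producing a family of configurations of $X$ that agree off $F_n$ and whose cardinality grows like $\exp(h_\FF(X)\,|F_n|)$; their $\tau$-images are determined on the thickening $F_n S^{-1}$ and hence, using Lemma~\ref{l:folner-riht-fset}, number at most like $\exp(h_\FF(\tau(X))\,|F_n|)$. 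Since $h_\FF(\tau(X)) < h_\FF(X)$, for $n$ large the domain family outnumbers the image, and the pigeon-hole principle produces two distinct almost-equal configurations in $X$ with the same image, i.e.\ a diamond; thus $\tau$ is not pre-injective.

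The main obstacle is the splicing step in (b)$\implies$(c): in the full-shift setting the replacement of $tp_1$ by $tp_2$ automatically yields a configuration in the ambient space, whereas here one must guarantee that the spliced configuration still lies in $X$ and still has the same $\tau$-image. This is exactly where both hypotheses on $X$ enter — the finite type property ensures that a local modification surrounded by a sufficiently large unchanged buffer cannot create a forbidden pattern, so the splice remains in $X$, while strong irreducibility supplies the independence needed for the entropy estimates and guarantees that the patterns to be glued genuinely coexist. Choosing the tile spacing large enough to reconcile the finite-type window, the memory set, and the strong irreducibility constant simultaneously is the delicate bookkeeping at the heart of the argument.
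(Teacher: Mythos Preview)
The paper does not prove Theorem~\ref{t:fiorenzi-strongly}; it merely states the result and attributes it to Fiorenzi~\cite[Theorem~4.7]{fiorenzi-strongly}. Your outline is the natural one and is essentially how the result is established in the literature: rerun the proof of Theorem~\ref{t:GOE-amen}, replacing $\log|A|$ by the common entropy and replacing the trivial independence of the full shift by the approximate independence furnished by strong irreducibility, with the subshift-of-finite-type hypothesis on $X$ ensuring that the splicing stays inside $X$.

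Two places deserve more care than your sketch gives them. First, for the splicing in (b)$\Rightarrow$(c) to land back in $X$, it is not enough that $\Omega$ absorb an SFT window; you also need the two ME patterns to \emph{coincide} on the added buffer. This is not part of the abstract definition of mutually erasable patterns, but it does hold for the specific patterns built from a diamond in the proof of Proposition~\ref{p:MEP-preinjectivity} (they agree outside the diamond's support), and your ``enlarging $\Omega$'' move preserves this since any $(x_1,x_2)\in X_{p_1,p_2}$ already agree on $G\setminus\Omega$. You should make this explicit. Second, the step $h_\FF(Z) < h_\FF(X)$ is not literally Proposition~\ref{p:inegalite-entropique-tiling} ``carried out inside $X$'': that proposition compares against the full product $A^{F_n^*}\times\prod A^{tE}$ and would only yield $h_\FF(Z) < \log|A|$. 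To get a drop below $h_\FF(X)$ you must use strong irreducibility of $X$ to produce, for each pattern $q\in Z_{F_n}$ and each subset of tiles, a companion pattern in $X_{F_n}$ carrying $tp_1$ on the chosen tiles and agreeing with $q$ far from them---this is what converts the per-tile constraint into a genuine multiplicative loss against $|X_{F_n}|$ rather than against $|A|^{|F_n|}$. The bookkeeping here (controlling what happens on the $\Delta$-collars $tE\setminus t\Omega$) is the real technical content of Fiorenzi's argument, and your last paragraph correctly flags it as the crux.
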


\begin{example}
The cellular automaton $\tau \colon X \to Y$ from the golden mean subshift to the even subshift described in Example~\ref{ex:ca-golden-even}
satisfies all the hypotheses in the previous theorem.
As $\tau$ is pre-injective (cf.~Example~\ref{ex:ca-golden-even-pre-inj}), we deduce that
$\tau$ is surjective.
Note that here one might also easily obtain  surjectivity of $\tau$ by a direct argument. 
\end{example}

\subsection{The Moore and the Myhill properties for subshifts}
\label{subsec:moore-myhill-subshifts}
Let $G$ be a countable group, $A$ a finite set, and $X \subset A^G$ a subshift.
One says that the subshift  $X$ has the \emph{Moore property}\index{Moore property!--- for a subshift}\index{subshift!--- satisfying the
Moore property}
if every surjective cellular automaton 
$\tau \colon X \to X$ is pre-injective
and that it has the \emph{Myhill property}\index{Myhill property!--- for a subshift}\index{subshift!--- satisfying the
Myhill property}  
if every pre-injective cellular automaton 
$\tau \colon X \to X$ is surjective.
One says that $X$ has the \emph{Moore-Myhill property}\index{Moore-Myhill property!--- for a subshift}\index{subshift!--- satisfying the
Moore-Myhill property}  
or that it satisfies the \emph{Garden of Eden theorem} if it has both the Moore and the Myhill properties.\index{Garden of Eden theorem!subshift satisfying the ---}\index{subshift!--- satisfying the Garden of Eden theorem}

From Theorem~\ref{t:fiorenzi-strongly}, we immediately deduce the following.

\begin{corollary}
\label{c:MM-sftsi}
Let $G$ be a countable amenable group and $A$ a finite set.
Then every strongly irreducible subshift of finite type $X \subset A^G$ has the Moore-Myhill property. 
\end{corollary}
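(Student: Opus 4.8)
The plan is to obtain the corollary as the special case $Y = X$ of Theorem~\ref{t:fiorenzi-strongly}. Since $G$ is countable and amenable, fix a F\o lner sequence $\FF = (F_n)_{n \in \N}$ for $G$. Let $\tau \colon X \to X$ be an arbitrary cellular automaton. I would apply Theorem~\ref{t:fiorenzi-strongly} with $B \coloneqq A$ and with both the source and the target subshift equal to $X$. The hypotheses are then immediate to verify: by assumption $X$ is a strongly irreducible subshift of finite type, so it is in particular a strongly irreducible subshift, and the entropy condition $h_\FF(X) = h_\FF(Y)$ holds trivially because $Y = X$.

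With these choices, Theorem~\ref{t:fiorenzi-strongly} asserts that the three conditions ``$\tau$ is surjective'' (a), ``$h_\FF(\tau(X)) = h_\FF(X)$'' (b), and ``$\tau$ is pre-injective'' (c) are equivalent. In particular the equivalence (a) $\iff$ (c) holds for every cellular automaton $\tau \colon X \to X$. Reading this equivalence in the direction (a) $\implies$ (c) shows that every surjective cellular automaton $\tau \colon X \to X$ is pre-injective, which is precisely the Moore property for $X$. Reading it in the direction (c) $\implies$ (a) shows that every pre-injective cellular automaton $\tau \colon X \to X$ is surjective, which is precisely the Myhill property for $X$. Hence $X$ enjoys both properties, i.e.\ the Moore-Myhill property, as defined in Subsection~\ref{subsec:moore-myhill-subshifts}.

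There is essentially no obstacle beyond correctly matching the data of Theorem~\ref{t:fiorenzi-strongly}: the entire mathematical content sits in that theorem, and the corollary is merely its diagonal instance $Y = X$. The only point meriting a line of care is that the theorem allows the target to be a strongly irreducible subshift that need not be of finite type, so taking $Y = X$ is legitimate even though the finite-type hypothesis is imposed only on the source; since our $X$ is of finite type, both roles are filled without difficulty.
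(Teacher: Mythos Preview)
Your proposal is correct and matches the paper's approach exactly: the paper simply states that the corollary is immediately deduced from Theorem~\ref{t:fiorenzi-strongly}, and your argument spells out precisely this diagonal specialization $Y = X$, $B = A$.
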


\begin{example}
Let $G = \Z^d$ and $A = \{0,1\}$.
Consider the hard-ball model $X \subset A^G$ described in Example~\ref{ex:hard-ball}.\index{hard-ball model}\index{subshift!hard-ball model ---}
As $\Z^d$ is amenable and $X$ is both strongly irreducible and of finite type,
we deduce from Corollary~\ref{c:MM-sftsi} that $X$ has the Moore-Myhill property.
In particular ($d = 1$), the golden mean subshift has the Moore-Myhill property.\index{golden mean subshift}\index{subshift!golden mean ---}  
\end{example}

\begin{example}[Fiorenzi]
\label{exemple-fiorenzi-5}
Let $A = \{0,1\}$ and let $X \subset A^\Z$ be the even subshift 
(cf.\ Example~\ref{ex:even-subshift}).\index{even subshift}\index{subshift!even ---}
Consider the cellular automaton  $\sigma \colon A^\Z \to A^\Z$ with memory set
$S = \{0,1,2,3,4\}$ and local defining map $\mu \colon A^S \to A$ given by
\[
\mu(y) =
\begin{cases}
1 & \mbox{ if $y(0)y(1)y(2) \in \{000, 111\}$ or $y(0)y(1)y(2)y(3)y(4) = 00100$}\\
0 & \mbox{ otherwise.}
\end{cases}
\]
Then one has $\sigma(X) \subset X$, and the cellular automaton $\tau \coloneqq \sigma\vert_X  \colon X \to X$ is not
pre-injective. Indeed, the configurations $x_1, x_2 \in X$ defined by
\[
x_1 = \cdots 0 \cdots 00(100)100 \cdots 0 \cdots
\]
and
\[
x_2 =  \cdots 0 \cdots 00(011)100 \cdots 0 \cdots
\]
satisfy
\[
\tau(x_1) =  \cdots 1 \cdots 11(100)10011 \cdots 1 \cdots = \tau(x_2).
\]
Observe, alternatively, that the patterns $p,q$ with support $\Omega \coloneqq \{0,1, \ldots, 12\}$ defined by
\[
p(n) = \begin{cases} 1  & \mbox{ if } n = 6,9\\
0 & \mbox{ otherwise} \end{cases} \mbox{ \ \ and \ \ } q(n) = \begin{cases} 1  & \mbox{ if } n = 7,8,9\\
0 & \mbox{ otherwise,} \end{cases}
\]
for all $n \in \Omega$, are ME. 
\par
From a case-by-case analysis, one can show that $\tau$ is surjective.
It follows that $X$ does not have the Moore property.
\par
We refer to \cite[Section 3]{fiorenzi-sofic} and \cite[Counterexample~2.18]{CFS-goe} for more details.
\end{example}

  As the even subshift is strongly irreducible and $\Z$ is amenable, the previous example shows that
  the hypothesis that $X$ is of finite type cannot be removed from Corollary~\ref{c:MM-sftsi}. 
However, we have the following (cf.~\cite{csc-myhill-monatsh}).

\begin{theorem}
\label{t:strong-irr-myhill}
Let $G$ be a countable amenable group and  $A$  a finite set. 
Then every strongly irreducible subshift $X \subset A^G$ has the Myhill property.
\end{theorem}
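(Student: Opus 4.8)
The plan is to prove the contrapositive of the Myhill implication: assuming $\tau\colon X\to X$ is \emph{not} surjective, I will produce a diamond, so that $\tau$ fails to be pre-injective. Set $Y\coloneqq\tau(X)$; since $\tau$ is continuous and equivariant and $X$ is compact, $Y$ is a subshift, and non-surjectivity means $Y\subsetneq X$. I would organize everything around two facts, each using strong irreducibility of the \emph{domain} $X$: (I) a proper subshift of a strongly irreducible subshift has strictly smaller entropy, so that $h_\FF(Y)<h_\FF(X)$; and (II) whenever $h_\FF(\tau(X))<h_\FF(X)$, a counting argument yields a diamond. (The case $h_\FF(X)=0$ is degenerate: the estimate of step (I) then rules out proper nonempty subshifts, so every $\tau$ is surjective; I therefore assume $h_\FF(X)>0$.)

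For step (I), since $Y$ is closed and $Y\subsetneq X$, there is a finite $\Omega\subset G$ and a pattern $p\in X_\Omega$ with $p\notin Y_\Omega$; by shift-invariance no $y\in Y$ satisfies $y\vert_{t\Omega}=tp$ for any $t\in G$. Let $\Delta$ be a strong irreducibility constant for $X$. The key local estimate is that \emph{inserting} $p$ costs only bounded information: using strong irreducibility to overwrite a core $\Omega$ by $p$ while keeping the configuration fixed outside $\Omega\Delta$, the resulting correction map on $X_R$ has fibres of size at most $|A|^{|\Omega\Delta|}=:M$, whence at least a fraction $1/M$ of the patterns in $X_R$ display $p$ on that core. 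To globalize this I would select, via the tiling estimate~\eqref{e:t-n}, a family of at least $\beta|F_n|$ cores $s\Omega$ that are pairwise $\Delta$-separated with collars inside $F_n$, and run a conditional (martingale-type) count on the uniform measure on $X_{F_n}$: because the cores are $\Delta$-separated, strong irreducibility gives that the conditional probability of \emph{not} seeing $p$ at a core, given the configuration outside its collar (which fixes the values at all other cores), is at most $1-1/M$. Multiplying these conditional bounds yields
\[
|Y_{F_n}|\ \le\ \bigl|\{x\in X_{F_n}: x\vert_{s\Omega}\ne sp\ \text{for all cores }s\}\bigr|\ \le\ (1-1/M)^{\beta|F_n|}\,|X_{F_n}|,
\]
and hence $h_\FF(Y)\le h_\FF(X)+\beta\log(1-1/M)<h_\FF(X)$. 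This conditional-independence bookkeeping is the main obstacle: the naive split of $F_n$ into cores plus a leftover region wastes an exponential factor $|A|^{|F_n\setminus\bigcup s\Omega|}$, and the whole point of the conditional count is to avoid this waste.

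For step (II), fix a base configuration $\bar x\in X$, a large Følner set $F_n$, and set $F_n^{--}\coloneqq\{g: g\Delta\subset F_n\}$, so that $F_n^{--}\Delta\subset F_n$. By strong irreducibility (gluing an interior pattern on $F_n^{--}$ to $\bar x$ on $G\setminus F_n$, extending to the infinite exterior by compactness since $X$ is closed), every $q\in X_{F_n^{--}}$ is realized by some $x_q\in X$ agreeing with $\bar x$ off $F_n$; distinct $q$ give distinct, pairwise almost equal configurations, so the set $Z\coloneqq\{x_q\}$ satisfies $|Z|\ge|X_{F_n^{--}}|$. Choosing a memory set $S$ with $1_G\in S$, the value $\tau(x_q)$ agrees with $\tau(\bar x)$ off $F_nS^{-1}$, whence $|\tau(Z)|\le|Y_{F_nS^{-1}}|$. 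Using Lemma~\ref{l:folner-riht-fset} and the Følner property to control the collars $F_n\setminus F_n^{--}$ and $F_nS^{-1}\setminus F_n$, one gets $\limsup_n|F_n|^{-1}\log|Y_{F_nS^{-1}}|\le h_\FF(Y)$, while along a subsequence realizing the $\limsup$ defining $h_\FF(X)$ one has $|F_n|^{-1}\log|X_{F_n^{--}}|\to h_\FF(X)$. Since $h_\FF(Y)<h_\FF(X)$ by (I), for large $n$ in that subsequence $|Z|>|\tau(Z)|$, so the pigeon-hole principle gives two distinct almost equal configurations in $Z$ with the same image: a diamond. Thus $\tau$ is not pre-injective, which is exactly the contrapositive sought.

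Finally, I would emphasize why no finite type hypothesis intervenes, in contrast to Corollary~\ref{c:MM-sftsi} and the Moore property: both steps use strong irreducibility only of the \emph{source} $X$, first to compress the image $Y$ (step (I)) and then to inflate the number of almost equal configurations in $X$ (step (II)). The Moore direction, which genuinely fails for the even subshift (Example~\ref{exemple-fiorenzi-5}), would instead require reconstructing the \emph{full} image from a thinned-out subset of the domain, and it is precisely there that finite type becomes indispensable.
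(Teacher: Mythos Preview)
The paper does not include a proof of this theorem; it merely states the result and cites \cite{csc-myhill-monatsh}. Your proposal is essentially correct and follows precisely the route taken in that reference: it is the natural adaptation of the implication (c)$\implies$(a) in the proof of Theorem~\ref{t:GOE-amen} to the strongly irreducible setting, with strong irreducibility standing in for the product structure of the full shift at the two places where the latter was used.

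Your Step~(II) is exactly right and mirrors the argument in Theorem~\ref{t:GOE-amen} line for line, with the configurations constant equal to $a_0$ outside $F_{n_0}$ replaced by configurations glued to a fixed $\bar x$ outside $F_n$ via strong irreducibility (and compactness for the passage to the infinite complement). The boundary estimates you invoke are precisely those of Lemma~\ref{l:folner-riht-fset}.

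Your Step~(I) is also correct, though the ``martingale-type'' phrasing slightly obscures the elementary mechanism. The clean statement is: for every pattern $q \in X_{F_n\setminus s\Omega\Delta}$, the set of extensions of $q$ in $X_{F_n}$ has cardinality at most $|A|^{|\Omega\Delta|}=M$ (trivially, since an extension is determined by its values on $s\Omega\Delta$) and contains \emph{at least one} extension displaying $sp$ on $s\Omega$ (strong irreducibility applied to the pair $s\Omega$, $F_n\setminus s\Omega\Delta$, which are $\Delta$-separated since $s\Omega\Delta\cap(F_n\setminus s\Omega\Delta)=\varnothing$). Hence at least a $1/M$ fraction of the extensions of any fixed $q$ display $sp$. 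Since the cores $s_j\Omega$ for $j\neq i$ lie outside the collar $s_i\Omega\Delta$ (this is where you need the tiles of your $\Omega\Delta$-tiling to be disjoint, and $1_G\in\Delta$), iterating over the $|T_n|\ge\alpha|F_n|$ cores and summing over $q$ gives the product bound. Two small points worth making explicit: ensure $1_G\in\Delta$ so that $\Omega\subset\Omega\Delta$, and note that your treatment of the case $h_\FF(X)=0$ is correct (Step~(I) then forces any proper subshift to have negative entropy, which is impossible, so every endomorphism is automatically surjective).
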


\begin{example}
The even subshift $X \subset \{0,1\}^\Z$ has the Myhill property since it is strongly irreducible 
and $\Z$ is amenable.\index{even subshift}\index{subshift!even ---}
\end{example}

\begin{example}
\label{exemple:GOE-contrex-preinj-notsurj-1}
Let $A = \{0,1\}$. 
Let $x_0,x_1 \in A^\Z$ denote the two constant configurations 
respectively defined by $x_0(n) = 0$ and
$x_1(n) = 1$ for all $n \in \Z$. Note that
$X = \{x_0,x_1\}$ is a subshift of finite type.
The map $\tau \colon X \to X$ given by $\tau(x_0) = \tau(x_1) = x_0$ is a cellular automaton
which is pre-injective but not surjective.
It follows that $X$ does not have the Myhill property.
This very simple example shows that the hypothesis that $X$ is strongly irreducible cannot be removed neither from Corollary~\ref{c:MM-sftsi}
nor from Theorem~\ref{t:strong-irr-myhill}.
Note that $X$ has the Moore property since $X$ is finite, so that every surjective self-mapping of $X$ is injective and therefore pre-injective.
\end{example}

\begin{example}[Fiorenzi]
Let $A = \{0,1,2\}$ and let $X \subset A^\Z$ be the subshift of finite
type consisting of all $x \in A^\Z$ such that
\[
x(n) x(n + 1) \notin \{01,02\} \quad \text{ for all } n \in \Z.
\]
Thus a configuration $x \colon \Z \to A$ is in $X$ if and only if one of
the following conditions is satisfied:
\begin{enumerate}[{\rm (i)}]
\item $x(n) = 0$ for all $n \in \Z$;
\item $x(n) \neq 0$ for all $n \in \Z$;
\item there exists $n_0 \in \N$ such that $x(n) \in \{1,2\}$ for all 
$n \leq n_0$ and $x(n) = 0$ for all $n > n_0$.
\end{enumerate}
Consider the cellular automaton $\sigma \colon A^\Z \to A^\Z$ with memory
set $S = \{0,1\}$ and local defining map
\[
\mu(y) = \begin{cases} y(0) & \mbox{ if } y(1) \neq 0\\
0 & \mbox{ otherwise.}
\end{cases}
\]
Observe that $\sigma(x) = x$ if $x \in X$ is of type (i) or (ii)
while, if $x \in X$ is of type (iii), then $\sigma(x)$ is obtained from $x$ by replacing its rightest  
nonzero term by $0$.   
We deduce  that  $\sigma(X) \subset X$ and
that the  cellular automaton $\tau \coloneqq \sigma\vert_X \colon X \to X$ is surjective but not
pre-injective (see \cite[Counterexample~4.27]{fiorenzi-sofic}). It follows that $X$ does not have the Moore property.
\par
It turns out that $X$ does not have the Myhill property either.
Indeed, consider now the cellular automaton $\sigma' \colon A^\Z \to A^\Z$ with memory
set $S' = \{-1,0\}$ and local defining map
\[
\mu'(y) = \begin{cases} y(0) & \mbox{ if } y(-1)y(0) \notin  \{10,20\} \\
y(-1) & \mbox{ otherwise.}
\end{cases}
\]
Observe that $\sigma'(x) = x$ if $x \in X$ is of type (i) or (ii)
while, if $x \in X$ is of type (iii), then $\sigma'(x)$ is obtained from $x$ by repeating on its right its rightest  nonzero term.   
We deduce that
$\sigma'(X) \subset X$ and that 
the cellular automaton $\tau'
\coloneqq \sigma'\vert_X \colon X \to X$ is injective and hence pre-injective. 
However, $\tau'$ is  not surjective
(observe for instance that the pattern $p \in A^{\{-1,0,1\}}$ defined by
$p(-1)p(0)p(1) = 120$ is a Garden of Eden pattern for $\tau'$).
\end{example}

Let $A$ be a finite set and $X \subset A^\Z$ a subshift.
One says that a word $u \in A^\star$ of length $n$
\emph{appears}\index{word!--- appearing in a subshift}\index{subshift!word appearing in a ---} in $X$ if there is a configuration $x \in X$ and $m \in \Z$
such that $u = x(m)x(m+1) \cdots x(m+n - 1)$.
The subset $L(X) \subset A^\star$ consisting of all words that appear in $X$ is called the \emph{language}\index{language of a subshift}\index{subshift!language of a ---} of $X$.
One says that the subshift $X$ is \emph{irreducible}\index{irreducible subshift}\index{subshift!irreducible ---} if given any two words $u,v \in L(X)$ there exists a word $w \in L(X)$ such that
$u w v \in L(X)$.
Clearly every strongly irreducible subshift $X \subset A^\Z$ is irreducible.
The converse is false as shown by the following example.

\begin{example}
Let $A = \{0,1\}$ and consider the subshift $X \subset A^\Z$ consisting of the two configurations 
$x \in A^\Z$ that satisfy $x(n) \not= x(n + 1)$ for all $n \in \Z$.
It is clear that $X$ is irreducible but not strongly irreducible. Observe that $X$ is of finite type. 
\end{example}

The following result is an immediate consequence of \cite[Theorem~8.1.16]{lind-marcus}
(cf.~\cite[Corollary~2.19]{fiorenzi-sofic}).

\begin{theorem}
Let $A$ be a finite set.
Then every irreducible subshift of finite type $X \subset A^\Z$ has the Moore-Myhill property.
\end{theorem}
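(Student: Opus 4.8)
The plan is to reduce both implications to the one-dimensional structure theory encapsulated in \cite[Theorem~8.1.16]{lind-marcus}, phrased in the entropy language of this section. First I would recode the subshift: over $G=\Z$ every irreducible subshift of finite type $X\subset A^\Z$ is topologically conjugate to an irreducible edge shift, i.e.\ is presented by a strongly connected graph, and under such a conjugacy cellular automata $\tau\colon X\to X$ correspond to sliding block codes. Since the Moore and Myhill properties are invariant under conjugacy, it suffices to treat $\tau$ as a self-sliding block code of an irreducible SFT.

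The three symbolic-dynamics inputs I would invoke are: (i) every proper subshift $Y\subsetneq X$ of an irreducible SFT $X$ satisfies $h_\FF(Y)<h_\FF(X)$ (strict entropy drop); (ii) the absence of a diamond for $\tau$ in the sense of \cite[Definition~8.1.15]{lind-marcus} — which is exactly pre-injectivity, as recalled after Definition~\ref{def:pre-inj-subshifts} — is equivalent to $\tau$ being finite-to-one; and (iii) $\tau$ is finite-to-one if and only if it preserves entropy, $h_\FF(\tau(X))=h_\FF(X)$. Items (ii) and (iii) are precisely the content of \cite[Theorem~8.1.16]{lind-marcus}; note that Proposition~\ref{p:entropie-image} already supplies the inequality $h_\FF(\tau(X))\le h_\FF(X)$ for free.

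For the Moore property I would argue: if $\tau$ is surjective then $\tau(X)=X$, so trivially $h_\FF(\tau(X))=h_\FF(X)$; by (iii) $\tau$ is finite-to-one, hence by (ii) it has no diamond, i.e.\ $\tau$ is pre-injective. For the Myhill property: if $\tau$ is pre-injective, then it has no diamond, so by (ii) it is finite-to-one, whence by (iii) $h_\FF(\tau(X))=h_\FF(X)$. Since $\tau(X)$ is a subshift of the irreducible SFT $X$ carrying full entropy, fact (i) forces $\tau(X)=X$, so $\tau$ is surjective. This closes both directions and hence gives the Moore--Myhill property.

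The hard part is not the bookkeeping above but the symbolic-dynamics inputs (i)--(iii): the equivalence of pre-injectivity (no diamonds), the finite-to-one property, and entropy preservation rests on the Perron--Frobenius theory of the transition matrix of the strongly connected graph presenting $X$, and the strict entropy monotonicity (i) likewise follows from irreducibility of that matrix. As these are exactly what \cite[Theorem~8.1.16]{lind-marcus} provides, the only residual work is to check explicitly that our notion of diamond coincides with theirs and that the recoding conjugacy transports surjectivity, pre-injectivity, and entropy; both checks are routine but deserve to be stated.
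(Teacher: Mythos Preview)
Your proposal is correct and follows exactly the route the paper indicates: the paper gives no proof beyond the sentence ``an immediate consequence of \cite[Theorem~8.1.16]{lind-marcus} (cf.~\cite[Corollary~2.19]{fiorenzi-sofic}),'' and your argument is precisely an unpacking of that citation---the diamond/finite-to-one/entropy equivalence of \cite[Theorem~8.1.16]{lind-marcus} together with the strict entropy drop for proper subshifts of an irreducible SFT. There is nothing to add.
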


\section{Garden of Eden theorems for other dynamical systems}
\label{sec:goe-dyn-sys}

\subsection{Dynamical systems}
By a \emph{dynamical system},\index{dynamical system} we  mean a triple $(X,G,\alpha)$, where 
 $X$ is a compact metrizable space, $G$ is a countable group, 
   and  $\alpha$ is a continuous  action of $G$ on $X$.
The space $X$ is called the \emph{phase space}\index{phase space}\index{dynamical system!phase space of a ---} of the dynamical system.  
   If there is no risk of confusion, 
   we shall write $(X,G)$, or even sometimes simply $X$,  instead of $(X,G,\alpha)$. 

\begin{example}
Let  $G$ be a countable group  and $A$ a compact metrizable topological space (e.g.~a finite set with its discrete topology).
Equip $A^G = \{x \colon G \to A\}$
with the product topology.
The \emph{shift action}\index{shift!--- action}\index{shift}\index{action!shift ---} $\sigma$ of $G$ on $A^G$
is the action defined by $\sigma(g,x) = g x$, where
\[
(g x)(h) = x(g^{-1} h)
\]
for all $x \in A^G$ and $g,h \in G$.
Then $(A^G,G,\sigma)$ is a dynamical system. 
\end{example}

\begin{example}
If  $(X,G,\alpha)$ is a dynamical system and  $Y \subset X$ a closed $\alpha$-invariant subset,
then $(Y,G,\alpha|_Y)$,
where $\alpha|_Y$ denotes the action of $G$ on $Y$ induced by restriction of $\alpha$,
 is a dynamical system.
In particular, if $G$ is a countable group, $A$ a finite set,
and $X \subset A^G$ a subshift,
then  $(X,G,\sigma|_X)$ is a dynamical system.
\end{example}

\begin{example}
Let $f \colon X \to X$ be a homeomorphism of a compact metrizable space $X$.
The dynamical system \emph{generated}\index{dynamical system!--- generated by a homeomorphism} by $f$ is the dynamical system
$(X,\Z,\alpha_f)$, where $\alpha_f$ is the action of $\Z$ on $X$ given by
$\alpha_f(n,f) \coloneqq f^n(x)$ for all $n \in \Z$ and $x \in X$.
We shall also  write $(X,f)$ to denote the dynamical system generated by $f$. 
\end{example}

\begin{remark}
If we fix the countable group $G$,
the dynamical systems $(X,G)$ are the objects of a concrete category $\DD_G$
in which the morphisms from an object $X \in \DD_G$ to another object $Y \in \DD_G$ consist of all 
equivariant continuous maps $\tau \colon X \to Y$. 
It follows from the Curtis-Hedlund-Lyndon theorem (cf.~Theorem~\ref{t:curtis-hedlund-lyndon}) that the category $\CC_G$ described in 
Remark~\ref{rem:category-subshifts} is a full subcategory of the category  $\DD_G$.
\end{remark}

Let $(X,G)$ and $(\widetilde{X},G)$ be two dynamical systems.
\par
One says that the dynamical systems $(X,G)$ and 
$(\widetilde{X},G)$ are \emph{topologically conjugate}\index{topologically!--- conjugate dynamical systems}
if they are isomorphic objects in the category $\DD_G$, i.e., 
if there exists an equivariant homeomorphism 
$h \colon \widetilde{X} \to X$.
\par
One says that $(X,G)$ is a \emph{factor}\index{factor!--- of a dynamical system}\index{dynamical system!factor of a ---} of 
$(\widetilde{X},G)$ if there exists an equivariant continuous surjective map
$\theta \colon \widetilde{X} \to X$. 
Such a map $\theta$ is then called a \emph{factor map}.\index{factor!--- map}
A factor map $\theta \colon \widetilde{X} \to X$ is said to be \emph{finite-to-one}\index{factor!finite-to-one --- map}\index{finite-to-one factor map} 
if the pre-image set 
$\theta^{-1}(x)$ is finite for each $x \in X$.
A finite-to-one factor map is said to be \emph{uniformly bounded-to-one}\index{factor!uniformly bounded-to-one --- map}\index{uniformly bounded-to-one factor map}  
if there is an integer $K \geq 1$ such that
$|\theta^{-1}(x)| \leq K$ for all  $x \in X$.

\subsection{Expansiveness}
One says that a dynamical system  $(X,G)$ is \emph{expansive}\index{expansive dynamical system}\index{dynamical system!expansive ---} if there exists a neighborhood 
$W \subset X \times X$ of the diagonal
\[
\Delta_X \coloneqq \{(x,x) : x \in X\} \subset X \times X
\] 
such that,
for every pair of distinct points $x,y \in X$,
there exists an element  $g = g(x,y)  \in G$ such that
$(g x, g y) \notin W$.
Such a set $W$ is then called an
\emph{expansiveness neighborhood}\index{expansiveness neighborhood} of the diagonal.
\par 
If $d$ is a metric on $X$ compatible with the topology,
the fact that $(X,G)$ is expansive is equivalent to the existence of a constant  
$\delta  > 0$ such that, for every pair of distinct points $x,y \in X$, there exists an element
$g = g(x,y)  \in G$ such that
$d(g x,g y) \geq  \delta$.

\begin{example}
Let $G$ be a countable group and $A$ a finite set. Then the $G$-shift on $A^G$ is expansive.
Indeed, it is clear that
\[
W \coloneqq \{(x,y) \in A^G \times A^G : x(1_G) = y(1_G)\}
\]
is an expansiveness neighborhood of $\Delta_{A^G}$.
\end{example}

\begin{example}
If $(X,G)$ is an expansive dynamical system and $Y \subset X$ is a closed invariant subset, then $(Y,G)$ is expansive.
Indeed, if $W$ is an expansiveness neighborhood of $\Delta_X$, then 
$W \cap (Y \times Y)$ is an expansiveness neighborhood of $\Delta_Y$.
In particular, if $G$ is a countable group, $A$ a finite set,
and $X \subset A^G$ a subshift,
then the dynamical system $(X,G,\sigma|_X)$ is expansive.
\end{example}

\subsection{Homoclinicity}
\label{ss:homoclinicity}
Let $(X,G)$ be a dynamical system.
Two points $x,y \in X$ are called \emph{homoclinic}\index{homoclinic!--- points} with respect to the action of $G$ on $X$, ore more briefly,
$G$-\emph{homoclinic}, if for any neighborhood $W \subset X \times X$ of the diagonal $\Delta_X$, there is a finite
set $F = F(W,x,y) \subset G$ such that $(gx,gy) \in W$ for all $g \in G \setminus F$.
\par
If $d$ is a metric on $X$ that is compatible with the topology,
two points $x, y \in X$ are homoclinic  if and only if
\[
\lim_{g \to \infty} d(g x, g y) = 0,
\]
where $\infty$ is the point at infinity in the one-point compactification of the discrete group $G$.
This means that,   for every $\varepsilon > 0$, there is a finite subset $F = F(\varepsilon,d,x,y) \subset G$ such that 
$d(g x,g y) < \varepsilon $ for all $g \in G \setminus F$.
\par
Homoclinicity clearly defines an equivalence relation on $X$ (transitivity follows from the triangle inequality). 
The equivalence classes of this relation are called the $G$-\emph{homoclinicity classes}\index{homoclinicity class} 
of $X$.

\begin{definition}
\label{def:pre-inject-ds}
Let $(X,G)$ be a dynamical system and $Y$ a set.
One says that a map $\tau \colon X \to Y$ is \emph{pre-injective}\index{pre-injective!--- map} if its restriction to each $G$-homoclinicity class is injective.
\end{definition}

\begin{example}
\label{ex:homoclinic-almost-equal}
Let $G$ be a countable group and $A$  a finite set.
Two configurations $x, y \in A^G$ are homoclinic with respect to the shift action of $G$ on $A^G$ if and only if they are almost equal
(see e.g.~\cite[Proposition~2.5]{csc-ijm-expansive}).
Indeed, first observe that the sets
\[
W_\Omega \coloneqq \{(x,y) \in A^G \times A^G : x\vert_\Omega = y\vert_\Omega\},
\]
where $\Omega$ runs over all finite subsets of $G$, form a neighborhood base of the diagonal 
$\Delta_{A^G} \subset A^G \times A^G$ (this immediately follows from the definition of the product topology).
Now, if $x, y \in A^G$ are almost equal, then the set $D \subset G$ consisting of all $g \in G$ such that $x(g) \not= y(g)$ is finite, so that
$\Omega D^{-1}$ is also finite for every finite subset $\Omega \subset G$.
As $(g x, g y) \in W_\Omega$ for every $g \in G \setminus \Omega D^{-1}$, this implies that $x$ and $y$ are homoclinic.
Conversely, suppose that $x, y \in A^G$ are homoclinic.
Then there exists a finite subset $F \subset G$ such that $(g x, g y) \in W_{\{1_G\}}$ for all 
$g \in G \setminus F$.
This implies that $x(g) = y(g)$ for all $g \in G \setminus F^{-1}$, so that $x$ and $y$ are almost equal.
\end{example}

\begin{example}
\label{ex:homoclinic-sub}
Let $(X,G,\alpha)$ be a dynamical system and $Y \subset X$  a closed invariant subset.
Denote as above by $\alpha|_Y$ the restriction of $\alpha$ to $Y$.
Then two points $x,y \in Y$ are homoclinic with respect to $\alpha|_Y$ if and only if they are homoclinic with respect to $\alpha$.
In particular, if $G$ is a countable  group, $A$ a finite set, and $X \subset A^G$ a subshift,
then two configurations $x, y \in X$ are homoclinic with respect to $\sigma|_X$ if and only if they are almost equal.
It follows that the definition of pre-injectivity for cellular automata
between subshifts given in Definition~\ref{def:pre-inj-subshifts} 
agrees with  the one given in Definition~\ref{def:pre-inject-ds} above.  
\end{example}

\subsection{The Moore and the Myhill properties for dynamical systems}
Let $(X,G)$ be a dynamical system.
\par
An \emph{endomorphism}\index{endomorphism of a dynamical system}\index{dynamical system!endomorphism of a ---} of $(X,G)$ is a continuous equivariant map $\tau \colon X \to X$.
\par
One says that the dynamical system  $(X,G)$ has the \emph{Moore property}\index{Moore property!--- for a dynamical system}\index{dynamical system!Moore property for a ---} if every surjective endomorphism of $(X,G)$  is pre-injective
and that it has the \emph{Myhill property}\index{Myhill property!--- for a dynamical system}\index{dynamical system!Myhill property for a ---} if every pre-injective 
endomorphism of $(X,G)$  is surjective.
One says that $(X,G)$ has the \emph{Moore-Myhill property}\index{Moore-Myhill property!--- for a dynamical system}\index{dynamical system!Moore-Myhill property for a ---} 
or that it satisfies the \emph{Garden of Eden theorem}\index{Garden of Eden theorem!dynamical system satisfying the ---}\index{dynamical system!--- satisfying the Garden of Eden theorem} if it has both the Moore and the Myhill properties.
\par
Observe that all these properties are invariants of topological conjugacy in the sense that if the dynamical systems $(X,G)$ and $(Y,G)$ are topologically conjugate then
$(X,G)$ has the Moore (resp.~the Myhill, resp.~the Moore-Myhill) property
if and only if $(Y,G)$ has the Moore (resp.~the Myhill, resp.~the Moore-Myhill) property.

  \begin{remark}
In the particular case when $(X,G)$ is a subshift, it immediately follows from  
Theorem~\ref{t:curtis-hedlund-lyndon}  and Example~\ref{ex:homoclinic-sub} that these definitions are equivalent to the ones given in Subsection~\ref{subsec:moore-myhill-subshifts}.\end{remark}

\subsection{Anosov diffeomorphisms}
Let $f \colon M \to M$ be a diffeomorphism of a compact smooth manifold $M$.
One says that $f$ is an \emph{Anosov diffeomorphism}\index{Anosov diffeomorphism}
(see e.g.~\cite{smale-bams}, \cite{brin-stuck},   
 \cite{shub-global-stability})
 if the tangent bundle $TM$ of $M$ 
continuously splits as a direct sum $TM = E_s \oplus E_u$ of two $df$-invariant subbundles $E_s$ and $E_u$ such that, with respect to some (or equivalently any) Riemannian metric on $M$,
 the differential $df$ is exponentially  contracting on $E_s$ and exponentially expanding on $E_u$, 
 i.e.,
 there are constants  $C > 0$ and $0 < \lambda < 1$ such that
 \begin{enumerate}[(i)]
 \item
$\Vert df^n(v) \Vert \leq C\lambda^n \Vert v \Vert$,
\item
$\Vert df^{-n}(w) \Vert \leq C \lambda^n \Vert w \Vert$
\end{enumerate}
for all $x \in M$, $v \in E_s(x)$,  $w \in E_u(x)$, and $n \geq 0$.

\begin{example}[Arnold's cat]\index{Arnold's cat}\index{dynamical system!Arnold's cat ---}
Consider the matrix
\[
A = 
\begin{pmatrix}
0 & 1 \\
1 & 1 
\end{pmatrix}
\]
 and the diffeomorphism  $f$ 
of the $2$-torus $\T^2 = \R^2/\Z^2 = \R/\Z \times \R/\Z$ 
given by
\begin{equation*}
f(x) \coloneqq A x = 
 \begin{pmatrix} x_2 \\ x_1 + x_2 \end{pmatrix}
 \quad
 \text{for all }
 x = \begin{pmatrix}x_1 \\ x_2 \end{pmatrix}
\in \T^2. 
\end{equation*}
The dynamical system $(\T^2,f)$ is known as \emph{Arnold's cat}.\index{dynamical system!Arnold's cat ---}
The diffeomorphism $f$ is Anosov.
Indeed, the eigenvalues of $A$ are
$\lambda_1 = - \dfrac{1}{\varphi}$, and 
$\lambda_2 = \varphi$, where $\varphi \coloneqq \dfrac{1 + \sqrt{5}}{2}$ is the golden mean.
As $-1 < \lambda_1 < 0$ and $1 < \lambda_2$, it follows that
$df = A$ is exponentially contracting in the direction of the eigenline associated with  $\lambda_1$ and uniformly expanding in the direction of the eigenline associated with  $\lambda_2$.
\end{example}

\begin{example}[Hyperbolic toral automorphism]\index{hyperbolic toral automorphism}
More generally, if $A \in \GL_n(\Z)$ has no eigenvalue on the unit circle,
then the diffeomorphism $f$ of the $n$-torus $\T^n = \R^n/\Z^n$, defined by $f(x) = A x$ for all $x \in \T^n$, is Anosov.
Such a  diffeomorphism is called a \emph{hyperbolic toral automorphism}.  
\end{example}

In~\cite[Theorem~1.1]{csc-goe-anosov},
we obtained the following result.

\begin{theorem}
\label{t:goe-anosov-tori}
Let $f$ be an Anosov diffeomorphism of the $n$-dimensional torus $\T^n$.
Then the dynamical system $(\T^n,f)$ has the Moore-Myhill property.
\end{theorem}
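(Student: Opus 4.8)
The plan is to reduce the general Anosov case to the algebraic model and then transfer the Moore--Myhill property from a symbolic cover. First I would invoke the Franks--Manning theorem: every Anosov diffeomorphism of $\T^n$ is topologically conjugate to a hyperbolic toral automorphism $f_A \colon x \mapsto A x$, where $A \in \GL_n(\Z)$ has no eigenvalue on the unit circle. Since the Moore--Myhill property is an invariant of topological conjugacy (as noted before this theorem), it suffices to establish it for each such $(\T^n, f_A)$. The advantage of $f_A$ is that it is expansive and admits a Markov partition, so there is an irreducible subshift of finite type $\Sigma \subset B^\Z$, for a suitable finite alphabet $B$, together with a factor map $\pi \colon \Sigma \to \T^n$ that is continuous, equivariant (intertwining the shift on $\Sigma$ with $f_A$), surjective, and uniformly bounded-to-one; moreover $\pi$ is injective off a shift-invariant exceptional set arising from the overlaps of the partition cells.

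The two bridge properties I would establish next are that $\pi$ preserves topological entropy and respects homoclinicity. Entropy is preserved because a uniformly bounded-to-one factor map cannot change topological entropy; this lets me compare the entropy of a proper invariant image in $\T^n$ with that of its preimage in $\Sigma$. For homoclinicity, recall (Example~\ref{ex:homoclinic-almost-equal}) that in $\Sigma$ the shift-homoclinicity classes are exactly the classes of almost-equal configurations, whereas in $\T^n$ homoclinicity of $x,y$ means $\lim_{|k| \to \infty} d(f_A^k x, f_A^k y) = 0$. Using the local product structure supplied by the Markov partition, one checks that $\pi$ carries almost-equal configurations to homoclinic points and, conversely, that a homoclinic pair in $\T^n$ lifts to an almost-equal pair in $\Sigma$. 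This is the mechanism that will let me match pre-injectivity on $\T^n$ with pre-injectivity (absence of diamonds) on $\Sigma$.

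Finally, given an endomorphism $\tau$ of $(\T^n, f_A)$, I would transfer surjectivity and pre-injectivity across $\pi$ and conclude from the symbolic Garden of Eden theorem for irreducible subshifts of finite type over $\Z$ (the last theorem of Section~\ref{sec:goe-subshifts}). Concretely, surjectivity of $\tau$ is equivalent, via entropy preservation, to maximality of the entropy of the image, while pre-injectivity transfers through the homoclinicity correspondence of the previous paragraph; running the implications surjective $\Rightarrow$ maximal image entropy $\Rightarrow$ pre-injective yields the Moore property, and the reverse chain yields the Myhill property. The main obstacle is precisely that $\pi$ is \emph{not} a bijection: one must lift $\tau$ to a cellular automaton $\widehat{\tau} \colon \Sigma \to \Sigma$ with $\pi \circ \widehat{\tau} = \tau \circ \pi$ (recognizing $\widehat{\tau}$ as a cellular automaton via the Curtis--Hedlund--Lyndon theorem, Theorem~\ref{t:curtis-hedlund-lyndon}) and control the behaviour on the exceptional set where $\pi$ fails to be injective, so that surjectivity and the exact correspondence of homoclinicity classes survive the passage through $\pi$. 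Managing this exceptional set, and ensuring that the lift exists and is genuinely a cellular automaton, is the delicate technical heart of the argument.
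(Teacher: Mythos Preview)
Your first step---reducing to a hyperbolic toral automorphism via Franks--Manning---matches the paper. After that, however, the paper takes a completely different route: it invokes Walters' rigidity theorem, which says that every continuous $f_A$-equivariant self-map of $\T^n$ is \emph{affine}, i.e., of the form $x \mapsto Bx + c$ with $B \in \M_n(\Z)$ commuting with $A$. With this in hand, surjectivity and pre-injectivity of $\tau$ become purely algebraic conditions on $B$ (e.g., $\det B \neq 0$), and the equivalence is checked directly. No symbolic cover or lifting is needed.

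Your symbolic approach, by contrast, has a genuine gap exactly where you flag it. There is no general mechanism to lift an endomorphism $\tau$ of $(\T^n,f_A)$ through the Markov-partition factor map $\pi \colon \Sigma \to \T^n$ to a shift-commuting map $\widehat{\tau} \colon \Sigma \to \Sigma$: the semiconjugacy goes the wrong way, and $\tau$ need not respect the partition boundaries. Acknowledging the difficulty is not the same as overcoming it, and nothing in the proposal indicates how you would construct $\widehat{\tau}$ or why it should exist as a cellular automaton. In fact, the paper's own Theorem~\ref{t:myhill-hyp} carries out the symbolic-cover strategy \emph{without} lifting $\tau$, working instead with entropy estimates directly on the base; but that argument yields only the Myhill half (and only under a mixing hypothesis, cf.~Corollary~\ref{cor:goe-anosov}). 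Obtaining Moore via the symbolic route appears to require additional structure, which is precisely what Walters' theorem supplies in the paper's proof.
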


The proof given in~\cite{csc-goe-anosov} uses two  classical results.
The first one is the Franks-Manning theorem~\cite{franks}, \cite{manning}\index{Franks-Manning theorem}\index{theorem!Franks-Manning ---} 
which states that
$(\T^n,f)$ is topologically conjugate to a hyperbolic toral automorphism. 
The second one is a result of Walters~\cite{walters} which says that every endomorphism of a hyperbolic toral automorphism is affine.
\par
We do not know if  the dynamical system $(M,f)$ has the Moore-Myhill property whenever $f$ is an Anosov diffeomorphism of a compact smooth manifold $M$.
However, we have obtained in \cite[Theorem~1.1]{csc-ijm-expansive} the following result.

\begin{theorem}
\label{t:myhill-hyp}
Let $X$ be a compact metrizable space equipped  with a continuous action of a countable amenable group $G$. 
Suppose that the dynamical system $(X,G)$ is expansive 
and that there exist a finite set $A$, a strongly irreducible subshift 
$\widetilde{X} \subset A^G$, and a uniformly bounded-to-one factor map 
$\theta \colon \widetilde{X} \to X$.
Then the dynamical system $(X,G)$ has the Myhill property.
\end{theorem}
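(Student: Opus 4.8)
The plan is to prove the contrapositive: if an endomorphism $\tau\colon X\to X$ is not surjective, then it is not pre-injective. First I would use expansiveness to reduce to a symbolic setting. Since $(X,G)$ is expansive with $X$ compact metrizable and $G$ countable, the action is topologically conjugate to a subshift $X\subset B^G$ over a finite alphabet $B$ (a finite cover of $X$ by sets of diameter below an expansiveness constant yields an equivariant embedding into $B^G$ whose image is closed and shift-invariant). After this identification, Theorem~\ref{t:curtis-hedlund-lyndon} shows that both $\tau$ and the factor map $\theta$ are cellular automata, homoclinicity coincides with almost equality (Example~\ref{ex:homoclinic-sub}), and pre-injectivity in the sense of Definition~\ref{def:pre-inject-ds} means exactly the absence of a \emph{diamond}, i.e.\ of a pair of distinct almost equal $x_1,x_2\in X$ with $\tau(x_1)=\tau(x_2)$. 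So it suffices to produce such a diamond.

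Second, I would record the entropy deficit of the image. Set $Y\coloneqq\tau(X)$; if $\tau$ is not surjective then $Y\subsetneq X$ is a proper subshift. Pulling back, $\theta^{-1}(Y)\subsetneq\widetilde X$ is a proper subshift as well (surjectivity of $\theta$ produces a point of $\widetilde X$ outside $\theta^{-1}(Y)$). Here I invoke the entropy-minimality of strongly irreducible subshifts over amenable groups, namely that every proper subshift of $\widetilde X$ has strictly smaller entropy (the analogue of Corollary~\ref{c:ineq-entropy-subshifts} with $\widetilde X$ in place of the full shift), to get $h_\FF(\theta^{-1}(Y))<h_\FF(\widetilde X)$. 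Extending $\theta$ to a cellular automaton on the full shift and applying Proposition~\ref{p:entropie-image} to the subset $\theta^{-1}(Y)$ gives $h_\FF(Y)=h_\FF(\theta(\theta^{-1}(Y)))\le h_\FF(\theta^{-1}(Y))<h_\FF(\widetilde X)$. Thus $h_\FF(\tau(X))<h_\FF(\widetilde X)$.

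Third, I would manufacture the diamond by a counting argument that transfers the missing entropy from $\widetilde X$ through $\theta$. Fix $\widetilde w\in\widetilde X$ and a strong-irreducibility constant $\Delta$ for $\widetilde X$, and for each Følner set $F_n$ let $P_n$ be the set of $\widetilde x\in\widetilde X$ agreeing with $\widetilde w$ off $F_n$. By strong irreducibility any pattern of $\widetilde X$ on the shrunken set $\{g:g\Delta\subset F_n\}$ can be glued to $\widetilde w$ on the complement of $F_n$, so $\log|P_n|\ge\log|\widetilde X_{F_n}|-o(|F_n|)$; along a subsequence realizing the $\limsup$ this gives $\log|P_n|\ge(h_\FF(\widetilde X)-\varepsilon)|F_n|$. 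All configurations $\theta(\widetilde x)$, $\widetilde x\in P_n$, agree off a fixed finite set and are hence mutually almost equal, and since $\theta$ is uniformly bounded-to-one, say at most $K$-to-one, they take at least $|P_n|/K$ distinct values. On the other hand each $\tau(\theta(\widetilde x))$ is determined by its restriction to a fixed finite set $\Phi_n$ with $|\Phi_n|/|F_n|\to1$, so these images take at most $|Y_{\Phi_n}|\le\exp((h_\FF(Y)+\varepsilon')|F_n|)$ distinct values. Since $h_\FF(Y)<h_\FF(\widetilde X)$, for $n$ large in the subsequence the number of distinct $\theta(\widetilde x)$ strictly exceeds the number of distinct $\tau(\theta(\widetilde x))$, and the pigeon-hole principle yields distinct almost equal points $x_1=\theta(\widetilde x_1)$ and $x_2=\theta(\widetilde x_2)$ of $X$ with $\tau(x_1)=\tau(x_2)$. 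This is a diamond, so $\tau$ is not pre-injective, completing the contrapositive.

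The main obstacle is the entropy-minimality input in the second step: one must know that a proper subshift of the strongly irreducible subshift $\widetilde X$ has strictly smaller $\FF$-entropy, which is the genuine generalization of Corollary~\ref{c:ineq-entropy-subshifts} away from the full shift and uses a tiling argument together with strong irreducibility to place a forbidden pattern independently on a positive density of tiles. A secondary technical point is the bookkeeping with the $\limsup$ defining entropy: the lower bound on $|P_n|$ is only available along a subsequence, while the upper bound on the image count holds for all large $n$, so the pigeon-hole comparison must be carried out along that subsequence, with the buffer losses $|F_n\setminus\{g:g\Delta\subset F_n\}|$ and $|\Phi_n\setminus F_n|$ controlled by the Følner property.
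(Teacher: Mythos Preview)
The survey does not prove this theorem; it only states it and refers to \cite{csc-ijm-expansive}. Your outline is essentially the argument carried out there: embed $X$ as a subshift via expansiveness, obtain a strict entropy deficit $h_\FF(\tau(X))<h_\FF(\widetilde X)$ from the entropy-minimality of strongly irreducible subshifts over amenable groups (the key external lemma you correctly flag, proved by a tiling-plus-gluing argument parallel to Proposition~\ref{p:inegalite-entropique-tiling}), and then run a pigeon-hole count on the images under $\theta$ of configurations in $\widetilde X$ that agree with a fixed $\widetilde w$ outside a F{\o}lner set, using the uniformly-bounded-to-one hypothesis to bound the loss under $\theta$.

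Two small points you glide over but which are routine: strong irreducibility as defined in the paper only glues two \emph{finite} patterns, so producing elements of $P_n$ agreeing with $\widetilde w$ on the infinite set $G\setminus F_n$ requires a compactness/limit argument; and your $\limsup$ bookkeeping is correct as you describe it---the upper bound on $\log|Y_{\Phi_n}|/|F_n|$ coming from $h_\FF(Y)$ holds for all large $n$ (since $|\Phi_n\setminus F_n|=o(|F_n|)$ and $\limsup$ eventually dominates), while the lower bound on $\log|P_n|/|F_n|$ coming from $h_\FF(\widetilde X)$ holds along a subsequence, so the comparison is made along that subsequence.
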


A homeomorphism $f$ of a topological space $X$ is called \emph{topologically mixing}\index{topologically!--- mixing dynamical system}\index{dynamical system!topologically mixing ---} if, given any two nonempty open subsets 
$U,V \subset X$, there exists an integer $N \geq 0$ such that
$f^n(U) \cap V \not= \varnothing$ for all $n \in \Z$ that satisfy  $|n| \geq N$.
By the classical work of Bowen\index{Bowen, Robert E.\ (Rufus)} (cf. \cite[Theorem~28 and Proposition~30]{bowen-markov-1970} and  \cite[Proposition~10]{bowen-markov-minimal_AJM-1970}),  dynamical systems generated by topologically mixing Anosov diffeomorphisms satisfy the hypotheses of
Theorem~\ref{t:myhill-hyp}. 
As a consequence (cf.~\cite[Corollary~4.4]{csc-goe-anosov}), we get the following partial extension of 
Theorem~\ref{t:goe-anosov-tori}.

\begin{corollary}
\label{cor:goe-anosov}
Let  $f$ be a topologically mixing Anosov diffeomorphism of a 
compact smooth manifold $M$.
Then the dynamical system $(M,f)$ has the Myhill property.
\end{corollary}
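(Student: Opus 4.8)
The plan is to deduce the statement directly from Theorem~\ref{t:myhill-hyp} by verifying that the dynamical system $(M,f) = (M,\Z,\alpha_f)$ meets each of its hypotheses. The acting group here is $G = \Z$, which is amenable, so that requirement is immediate. First I would record that every Anosov diffeomorphism is expansive: the local product structure coming from the contracting/expanding splitting $TM = E_s \oplus E_u$ forces two orbits that stay uniformly close for all time to coincide, which is exactly expansiveness of $(M,f)$ in the metric formulation given above. This step is classical and does not use topological mixing.

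The substantive input is the existence of a finite alphabet $A$, a strongly irreducible subshift of finite type $\widetilde{X} \subset A^\Z$, and a uniformly bounded-to-one factor map $\theta \colon \widetilde{X} \to M$; this is precisely what Bowen's theory of Markov partitions provides. I would take $A$ to be the finite index set of a Markov partition $\{R_1,\dots,R_N\}$ of $M$, let $\widetilde{X} = \Sigma_B \subset A^\Z$ be the two-sided subshift of finite type determined by the transition matrix $B$ recording which rectangles can follow which under $f$, and let $\theta$ be the associated itinerary (coding) map sending a bi-infinite admissible sequence to the unique point whose orbit visits the corresponding rectangles. By construction $\theta$ is continuous, surjective, and conjugates the shift on $\widetilde{X}$ to $f$ on $M$, i.e.\ it is $\Z$-equivariant; Bowen's analysis (the cited Theorem~28 and Proposition~30, together with Proposition~10) shows that $\theta$ is finite-to-one and, in fact, uniformly bounded-to-one.

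It then remains to see that $\widetilde{X}$ is strongly irreducible, and here I would use the hypothesis that $f$ is topologically mixing. This transfers, via the semiconjugacy $\theta$, to topological mixing of the subshift of finite type $\widetilde{X}$, equivalently to the transition matrix $B$ being irreducible and aperiodic. For a $\Z$-subshift of finite type, topological mixing is equivalent to the existence of an integer $N$ such that any two admissible words can be connected by an admissible word of every length at least $N$; this gap condition is exactly strong irreducibility, with an admissible choice of the finite set $\Delta$ being an interval of length $N$ enlarged by the memory of the subshift. With all four hypotheses of Theorem~\ref{t:myhill-hyp} in hand, that theorem yields the Myhill property for $(M,f)$.

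I expect the only genuinely delicate point to be the uniform bound on the number of preimages of $\theta$: while finite-to-oneness of the coding map is a fairly routine consequence of expansiveness together with the Markov property, the uniform bound $|\theta^{-1}(x)| \le K$ is the more refined conclusion of Bowen's work that is really being invoked here, and it is the feature that makes $\theta$ a \emph{uniformly} bounded-to-one factor map and hence Theorem~\ref{t:myhill-hyp} applicable.
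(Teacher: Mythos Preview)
Your proposal is correct and follows exactly the paper's approach: invoke Theorem~\ref{t:myhill-hyp} and appeal to Bowen's theory of Markov partitions to supply expansiveness, the strongly irreducible subshift of finite type, and the uniformly bounded-to-one coding map. One small caution: topological mixing passes to factors, not to extensions, so saying that mixing of $(M,f)$ ``transfers via the semiconjugacy $\theta$'' to mixing of $\widetilde{X}$ is not a valid justification on its own; the mixing of $\widetilde{X}$ is rather part of Bowen's analysis (the rectangles have nonempty interior, so mixing of $f$ forces aperiodicity of the transition matrix $B$), and is indeed covered by the references you already cite.
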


\begin{remark}
All known examples of Anosov diffeomorphisms are topologically mixing.
Also, all compact smooth manifolds that are known to admit Anosov diffeomorphisms are infra-nilmanifolds. 
We recall that a \emph{nilmanifold}\index{nilmanifold} is a manifold  of the form $N/\Gamma$, where $N$ is a simply-connected nilpotent Lie group and $\Gamma$ is a discrete cocompact subgroup of $N$ and that a \emph{infra-nilmanifold}\index{infra-nilmanifold}  is a manifold that is finitely covered by some nilmanifold.
\end{remark}

\subsection{Weak specification}
\label{sec:WS}
Recently, after our preprint \cite{csc-goe-principal} had circulated, Hanfeng Li\index{Li, Hanfeng} posted his paper \cite{li-2017} containing an impressive Garden of Eden type theorem generalizing several results mentioned above (see Theorems \ref{t:Li-1}
and \ref{t:Li-2}, and Corollaries \ref{c:Li-1} and \ref{c:Li-2}).

The key notion in Li's paper is that of \emph{specification}, a strong orbit tracing property which was introduced
by Rufus Bowen\index{Bowen, Robert E.\ (Rufus)} for $\Z$-actions in relation to his studies on Axiom A diffeomorphisms in \cite{bowen-periodic-1971} (see
also \cite[Definition 21.1]{denker-grillenberger-sigmund}) and was subsequently extended to $\Z^d$-actions by Ruelle\index{Ruelle, David} in 
\cite{ruelle-statistical-1973}. Several versions and generalizations of specification have appeared in the literature 
(see, in particular, \cite[Definition 5.1]{lind-schmidt} and \cite[Definition 6.1]{chung-li-2015}).
Here is the one we need (cf.~\cite[Definition 6.1]{chung-li-2015}).

\begin{definition}
\label{def:weak-specification}
A dynamical system $(X,G)$ has the \emph{weak specification property}\index{weak specification property}\index{dynamical system!weak specification property for a ---} 
if for any $\varepsilon > 0$ there exists a nonempty
symmetric finite subset $\Delta \subset G$ satisfying the following property: if $(\Omega_i)_{i \in I}$ is any finite family
of finite subsets of $G$ such that $\Delta \Omega_i \cap \Omega_j = \varnothing$ for all distinct $i,j \in I$, and $(x_i)_{i \in I}$ 
is any family of points in $X$, then there exists $x \in X$ such that 
\[
d(sx,sx_i) \leq \varepsilon \ \ \mbox{ for all } i \in I \mbox{ and } s \in \Omega_i,
\]
where $d$ is any metric compatible with the topology on $X$.
\end{definition}

It is straightforward (cf.~\cite[Proposition A.1]{li-2017}) to check that if $G$ is a countable group, $A$ is a finite
alphabet set, and $X \subset A^G$ is a subshift, then the shift dynamical system $(X,G)$ has the weak specification property 
if and only if it is strongly irreducible (cf.~Section \ref{sec:goe-subshifts}). 
Also, it is easy to see that the weak specification property passes to factors.

Li \cite[Theorem 1.1]{li-2017} proved the following:

\begin{theorem}
\label{t:Li-1}
Let $(X,G)$ be a dynamical system. Suppose that the group $G$ is amenable and that $(X,G)$ is expansive and has the
weak specification property. Then $(X,G)$ has the Myhill property.
\end{theorem}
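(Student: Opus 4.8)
The plan is to run a topological-entropy argument parallel to the proof of Theorem~\ref{t:GOE-amen}, with weak specification playing the role that strong irreducibility played for subshifts, and expansiveness providing the finite ``coding'' that makes the counting meaningful. Fix a compatible metric $d$ and an expansiveness constant $\delta > 0$, together with a F\o lner sequence $\FF = (F_n)_{n \in \N}$. For $\varepsilon \in (0,\delta)$, define the topological entropy $h(X)$ by counting, for each $n$, the maximal cardinality of an $(\varepsilon,F_n)$-separated subset of $X$ (two points being $(\varepsilon,F_n)$-separated if $d(sx,sy) > \varepsilon$ for some $s \in F_n$), normalizing by $|F_n|$ and taking the limit superior; expansiveness guarantees finiteness and, as in the subshift case, makes these counts faithful at scales below $\delta$. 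Let $\tau \colon X \to X$ be an endomorphism. The whole theorem will follow from the chain of implications
\[
\tau \text{ pre-injective} \;\Longrightarrow\; h(\tau(X)) = h(X) \;\Longrightarrow\; \tau(X) = X,
\]
where the final equality is exactly surjectivity of $\tau$.

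For the second implication I would establish the analog of Corollary~\ref{c:ineq-entropy-subshifts}, namely that \emph{any proper closed $G$-invariant subset $Y \subsetneq X$ satisfies $h(Y) < h(X)$}. This is where weak specification enters. Picking a point $x_0 \in X \setminus Y$ together with a ball around it disjoint from $Y$, expansiveness produces a finite ``window'' $E \subset G$ on which every point of $Y$ stays $\varepsilon$-separated from the local behavior of $x_0$, while weak specification (Definition~\ref{def:weak-specification}) lets $X$ realize that behavior on each tile of an $E$-tiling $T \subset G$ independently of the others. Counting $(\varepsilon,F_n)$-separated sets tile by tile, exactly as in Proposition~\ref{p:inegalite-entropique-tiling} and using the tiling-density lemma of Section~\ref{sec:goe-amenable}, then yields a uniform multiplicative deficit $h(Y) \le h(X) - c\alpha < h(X)$. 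Since $\tau(X)$ is a compact $G$-invariant subset of $X$, applying this with $Y = \tau(X)$ shows that $h(\tau(X)) = h(X)$ forces $\tau(X) = X$. The companion inequality $h(\tau(X)) \le h(X)$, the analog of Proposition~\ref{p:entropie-image}, is the easy observation that $\tau$ is an equivariant continuous surjection onto $\tau(X)$ and so cannot increase entropy.

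The first implication is the crux and, I expect, the main obstacle. I would argue contrapositively: assuming $h(\tau(X)) < h(X)$, I must produce two \emph{distinct} points lying in a single $G$-homoclinicity class with the same image under $\tau$, which by Definition~\ref{def:pre-inject-ds} contradicts pre-injectivity. Fixing a reference point $x_* \in X$, the goal is a family $Z_n \subset X$ of points, each homoclinic to $x_*$, whose behaviors on $F_n$ realize an $(\varepsilon,F_n)$-separated set of cardinality of order $e^{h(X)|F_n|}$; their images then form a set whose separated cardinality is governed by $h(\tau(X)) < h(X)$ and is therefore exponentially smaller, so for $n$ large the pigeonhole principle forces two distinct members of $Z_n$ to share an image. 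The delicate points, and the reason this step is hard, are threefold: constructing $Z_n$ inside $X$ via weak specification requires working on $F_n$ together with finite annuli exhausting its complement and then passing to genuinely homoclinic limits by compactness and expansiveness, rather than settling for mere $\varepsilon$-shadowing; guaranteeing that ``same image'' survives the passage from $\varepsilon$-separation to honest equality forces a careful calibration of $\delta$ and $\varepsilon$ against a ``memory'' scale for $\tau$; and matching the exponential counts on the two sides so that the strict gap $h(\tau(X)) < h(X)$ actually dominates the boundary error terms, which are controlled by the F\o lner property as in Lemma~\ref{l:folner-riht-fset}.
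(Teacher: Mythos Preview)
The paper does not prove this theorem; it only quotes it from Li~\cite{li-2017} and records that it subsumes the authors' earlier Theorem~\ref{t:myhill-hyp}. So there is no argument in the paper to compare against, but your entropy strategy is the correct one and is, in outline, what Li does.

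Your treatment of the second implication is right. For the first, the plan is also right, but one lemma deserves to be isolated since it dissolves the two hardest of your three flagged difficulties: \emph{in an expansive system with constant $\delta$, two points $x,y$ are homoclinic if and only if $d(gx,gy) \le \delta/2$ for all but finitely many $g \in G$}. Indeed, if $d(g_n x, g_n y) \ge \varepsilon_0 > 0$ along some $g_n \to \infty$, subsequential limits $x' \ne y'$ satisfy $d(h x', h y') \le \delta/2 < \delta$ for every $h$, contradicting expansiveness. With this in hand, once your limit-over-exhausting-sets construction produces points $z$ that shadow $x_*$ to within $\delta/4$ outside a fixed finite set $E \supset F_n$, those $z$ are already genuinely homoclinic to $x_*$. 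The same lemma closes the pigeonhole step: by uniform continuity and equivariance of $\tau$, all the images $\tau(z)$ stay within $\delta/4$ of $\tau(x_*)$ outside a fixed finite enlargement $E'$ of $E$; two of them that also lie in a common $(\delta/4, E')$-spanning class of $\tau(X)$ then satisfy $d(g\tau(z_1), g\tau(z_2)) < \delta$ for \emph{every} $g$ and hence coincide by expansiveness. The strict gap $h(\tau(X)) < h(X)$ supplies the pigeonhole exactly as you describe.
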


Note that Theorem \ref{t:Li-1} covers Theorem \ref{t:myhill-hyp}, by virtue of the remarks following 
Definition \ref{def:weak-specification}.  

Recall (cf.~Example \ref{exemple-fiorenzi-5}) that if $X \subset \{0,1\}^\Z$ denotes the even subshift 
(cf.~Example~\ref{ex:even-subshift}),  then $(X, \Z)$ is expansive, has the weak specification property
(since $X$ is strongly irreducible), but does not have the Moore property. 
This shows that from the hypotheses of Theorem \ref{t:Li-1} one cannot deduce the Moore property, in general.

\subsection{Algebraic dynamical systems}
\label{sec:ADS}
An \emph{algebraic dynamical system}\index{algebraic dynamical system}\index{dynamical system!algebraic ---} is a dynamical system  of the form $(X,G)$, where $X$ is a compact metrizable abelian topological group
and $G$ is a countable group acting on $X$ by continuous group morphisms.
Note that if $(X,G)$ is an algebraic dynamical system, then, for each $g \in G$, the map 
$x \mapsto g x$ is a continuous group automorphism of $G$ with inverse $x \mapsto g^{-1} x$.

\begin{example}
Let $G$ be a countable group and $A$ a compact metrizable topological group
(for example a finite discrete abelian group, or  the $n$-dimensional torus $\T^n$, or  the infinite-dimensional torus $\T^\N$,  or  the group $\Z_p$  of $p$-adic integers for some prime $p$).
Then the $G$-shift $(A^G,G)$ is an algebraic dynamical system. 
\end{example}  

\begin{example}
Let $X$ be a compact metrizable abelian group and $f \colon X \to X$ a continuous group automorphism (for example $X = \T^n$ and $f \in \GL_n(\Z)$).
Then the dynamical system $(X,f)$ generated by $f$ is an algebraic dynamical system.
\end{example}

Let $(X,G)$ be an algebraic dynamical system.
\par
If $d$ is a metric on $X$ that is compatible with the topology then a point $x \in X$ is homoclinic to $0_X$ if and only if one has
\[
\lim_{g \to \infty} d(g x, 0_X) = 0.
\]
The set $\Delta(X,G)$ consisting of all points of $X$ that are homoclinic to $0_X$ is an 
$G$-invariant additive subgroup of $X$, called
the \emph{homoclinic group}\index{homoclinic!--- group}\index{group!homoclinic ---} of $(X,G)$
(cf.~\cite{lind-schmidt}).
Two points $x,y \in X$ are homoclinic if and only if $x - y \in \Delta(X,G)$.
It follows that the set of homoclinicity classes of $(X,G)$ can be identified with the quotient group
$X/\Delta(X,G)$.
 \par
Consider now the Pontryagin dual $\widehat{X}$ of $X$.
We recall that if $L$ is a locally compact abelian group,\index{locally compact abelian group}\index{group!locally compact abelian ---} the elements of its Pontryagin dual $\widehat{L}$\index{Pontryagin dual}\index{locally compact abelian group!Pontryagin dual of a ---}\index{group!Pontryagin dual of a locally compact abelian ---} are 
the \emph{characters}\index{character of a locally compact abelian group}\index{locally compact abelian group!character of a ---}\index{group!character of a locally compact abelian ---} of $L$, i.e., the continuous group morphisms
$\chi \colon L \to \T$, where $\T \coloneqq \R/\Z$,
and that the topology on $\widehat{L}$ is the topology of uniform convergence on compact subsets
(see e.g.~\cite{morris}).
As the abelian group $X$ is  compact and  metrizable, 
$\widehat{X}$ is a discrete countable abelian group.
There is also a natural dual action of $G$ on $\widehat{X}$ defined by
\[
g \chi(x) \coloneqq \chi(g^{-1} x)
\]
for all $g \in G$, $\chi \in \widehat{X}$, and $x \in X$.
Note that $\chi \mapsto g \chi$ is a group automorphism of $\widehat{X}$ for each $g \in G$.
\par
We recall that the \emph{integral group ring}\index{integral group ring}\index{group!integral --- ring} $\Z[G]$ of $G$  consists of all formal series
\[
r = \sum_{g \in G} r_g g,
\]
where $r_g \in \Z$ for all $g \in G$ and $r_g = 0$ for all but finitely many $g \in G$, and  the  operations on $\Z[G]$ are defined by the formulas
\begin{align}
\label{e:sum+convolution}
r + s &= \sum_{g \in G} (r_g + s_g) g,  \\
r s &= \sum_{g_1,g_2 \in G} r_{g_1} s_{g_2}  g_1 g_2 
\end{align}
for all
\[
r = \sum_{g \in G} r_g g, \quad s = \sum_{g \in G} s_g g \in \Z[G].
\]
By linearity, the action of $G$ on $\widehat{X}$ extends to a left $\Z[G]$-module structure on 
$\widehat{X}$.
\par
Conversely, if $M$ is a countable left $\Z[G]$-module and we equip $M$ with its discrete topology, then its Pontryagin dual $\widehat{M}$ is a compact metrizable abelian group.
The left $\Z[G]$-module structure on $M$ induces by restriction an action of $G$ on $M$, and, by dualizing, we get an action of $G$ on $\widehat{M}$ by continuous group morphisms, 
so that $(\widehat{M},G)$ is an algebraic dynamical system.
\par
Using the fact that every locally compact abelian group is isomorphic to its bidual,
one shows that Pontryagin duality yields a one-to-one correspondence between algebraic dynamical systems with acting group $G$ and  countable left $\Z[G]$-modules
(see~\cite{schmidt-book}, \cite{lind-schmidt-handbook}, \cite{lind-schmidt-survey-heisenberg}).
\par

Recall that from the hypotheses of Theorem \ref{t:Li-1} one cannot deduce the Moore property, in general.
However, Li \cite[Theorem 1.2]{li-2017} proved that when restricting to the class of algebraic dynamical 
systems (cf.~Section \ref{sec:PADS}) with amenable acting group, the Moore property follows from 
expansiveness and weak specification:

\begin{theorem}
\label{t:Li-2}
Let $(X,G)$ be an algebraic dynamical system. Suppose that the group $G$ is amenable and that $(X,G)$ is expansive and has the
weak specification property. Then $(X,G)$ has the Moore property.
\end{theorem}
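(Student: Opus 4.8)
The plan is to run the entropy-theoretic argument behind the Moore direction of the Garden of Eden theorem (compare the implication (b)~$\implies$~(c) in Theorem~\ref{t:GOE-amen}), but now intrinsically on the expansive system $(X,G)$, with the algebraic structure replacing the combinatorics of mutually erasable patterns. First I would translate pre-injectivity into a statement about the homoclinic group. Since $(X,G)$ is algebraic and $\tau$ is an endomorphism, $\tau$ is a continuous equivariant group morphism, and the homoclinicity classes are exactly the cosets of $\Delta(X,G)$. Hence $\tau$ is pre-injective if and only if its restriction to $\Delta(X,G)$ is injective, i.e.\ if and only if $\ker\tau\cap\Delta(X,G)=\{0_X\}$. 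Arguing by contraposition, I would assume that $\tau$ is surjective but \emph{not} pre-injective, producing a nonzero $w\in\Delta(X,G)$ with $\tau(w)=0_X$; by equivariance every translate $gw$ $(g\in G)$ is again a homoclinic point killed by $\tau$.

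The goal is then to derive a contradiction at the level of topological entropy $h_\FF$ with respect to the F\o lner sequence $\FF=(F_n)_{n\in\N}$. Expansiveness guarantees a finite generating cover, so $h_\FF(X)$ is finite and can be computed combinatorially from $(F_n,\varepsilon)$-separated sets; moreover, as in Proposition~\ref{p:entropie-image}, an equivariant continuous map cannot increase entropy, so $h_\FF(\tau(X))\le h_\FF(X)$, while surjectivity gives $\tau(X)=X$ and hence $h_\FF(\tau(X))=h_\FF(X)$. I would then build a subset $Z\subset X$ satisfying simultaneously $\tau(Z)=X$ and $h_\FF(Z)<h_\FF(X)$; combining these with entropy monotonicity yields $h_\FF(X)=h_\FF(\tau(Z))\le h_\FF(Z)<h_\FF(X)$, the desired contradiction.

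The construction of $Z$ is where expansiveness, weak specification and the localized homoclinic point $w$ all enter, in analogy with the tiling argument of Proposition~\ref{p:inegalite-entropique-tiling}. Fix a compatible metric $d$ and an expansiveness constant $\delta>0$. Because $w\neq 0_X$, expansiveness provides some $g^\ast\in G$ with $d(g^\ast w,0_X)\ge\delta$, while homoclinicity of $w$ yields a finite $F_0\subset G$ with $d(gw,0_X)<\delta/3$ for all $g\notin F_0$; thus $w$ behaves like an almost compactly supported perturbation. Applying weak specification with accuracy $\delta/3$ gives a finite symmetric set $\Delta_1\subset G$, and I would choose a finite $E\subset G$ absorbing $F_0$, $\Delta_1$ and a memory-type neighbourhood, then fix an $E$-tiling $T\subset G$, which exists by the Zorn argument of the tilings subsection and has positive lower density $|T_n|\ge\alpha|F_n|$. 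At each tile $t\in T$ the two points $x$ and $x+tw$ are distinct but have the same image under $\tau$, and their difference is $\delta/3$-concentrated near $tF_0$. I would let $Z$ consist of the points of $X$ that, on each tile, avoid the ``$+tw$ alternative'' (made precise by a $\delta/3$-separation condition on $\{sz:s\in tE\}$). Forbidding one of two $\delta$-separated local possibilities at a positive density of pairwise $\Delta_1$-separated tiles removes a definite proportion of $(F_n,\delta/3)$-separated points, which gives the strict drop $h_\FF(Z)<h_\FF(X)$.

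It remains to verify $\tau(Z)=X$: given any target $y\in X$, choose $x$ with $\tau(x)=y$ by surjectivity, and correct $x$ tile-by-tile by subtracting the appropriate translates $tw$ so that the result lies in $Z$; since $\tau(tw)=0_X$, each correction preserves the image, so the corrected point $z$ satisfies $\tau(z)=y$. The main obstacle is precisely this step together with the entropy estimate: unlike in a subshift, the translates $tw$ are only approximately supported on the tiles, so one must control the overlapping homoclinic tails and guarantee that the simultaneous corrections at all tiles still produce a genuine point of $X$ lying in $Z$. This is exactly what weak specification supplies, namely independent realizability of prescribed behaviour on the well-separated pieces $tE$, while expansiveness converts the metric separation furnished by $w$ into the quantitative counting bound needed for $h_\FF(Z)<h_\FF(X)$. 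Carrying out these two quantitative estimates, the strict entropy inequality and the tail-controlled gluing, is the technical heart of the proof.
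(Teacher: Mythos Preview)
The paper does not give its own proof of Theorem~\ref{t:Li-2}; it is quoted from Li~\cite{li-2017}. So I assess your proposal on its own merits.

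There is a genuine gap at the very first step. You write: ``Since $(X,G)$ is algebraic and $\tau$ is an endomorphism, $\tau$ is a continuous equivariant group morphism.'' This is false. An endomorphism of the \emph{dynamical system} $(X,G)$ is, by the definition in Subsection~8.4, merely a continuous $G$-equivariant self-map of $X$; nothing forces it to respect the abelian group structure of $X$. For a concrete instance inside the hypotheses of the theorem, take $X=(\Z/2\Z)^G$ with the shift action: this is an expansive algebraic dynamical system with weak specification, yet the majority-vote cellular automaton of Example~\ref{ex:majority-ca} (or any nonlinear cellular automaton) is a continuous equivariant self-map that is not a group morphism.

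This unjustified assumption is load-bearing for the whole strategy. From a diamond $(x_1,x_2)$ you extract $w=x_1-x_2\in\Delta(X,G)\setminus\{0\}$, but you cannot conclude $\tau(w)=0$, nor that $\tau(x+tw)=\tau(x)$ for a general $x\in X$. The tile-by-tile ``correction'' in your last paragraph, where you subtract translates $tw$ and claim the image is preserved ``since $\tau(tw)=0_X$'', is therefore groundless: without additivity of $\tau$, subtracting $tw$ need not preserve the image. In the symbolic setting the passage from a single diamond to \emph{mutually erasable patterns} (Proposition~\ref{p:MEP-preinjectivity}) is what makes the correction work for \emph{every} configuration; here you have tacitly replaced that step by linearity of $\tau$, which is not available. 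A secondary issue you flag but do not resolve is the convergence of the infinite correction $\sum_{t\in T'} tw$ in $X$; homoclinicity of $w$ controls single orbits, not summability over an infinite tiling. But even granting that, the argument collapses for nonlinear $\tau$, which is precisely the class the Moore property must cover.
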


As an immediate consequence of Theorems \ref{t:Li-1} and \ref{t:Li-2}, one deduces the following (cf.~\cite[Theorem 1.3]{li-2017}):

\begin{corollary}[Garden of Eden theorem for expansive algebraic dynamical systems with the weak specification property]
\label{c:Li-1}\index{Garden of Eden theorem!--- for expansive algebraic dynamical systems with the weak specification property}\index{theorem!Garden of Eden --- for expansive algebraic dynamical systems with the weak specification property}
Let $(X,G)$ be an algebraic dynamical system. Suppose that the group $G$ is amenable and that $(X,G)$ is expansive and has the
weak specification property. Then $(X,G)$ has the Moore-Myhill property.
\end{corollary}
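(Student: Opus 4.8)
The plan is to observe that the statement is a purely formal consequence of the two preceding theorems, so the proof reduces to unwinding the definition of the Moore-Myhill property. Recall that, by definition, a dynamical system $(X,G)$ has the Moore-Myhill property precisely when it has both the Moore property and the Myhill property; hence it suffices to verify each of these two properties separately under the stated hypotheses.

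First I would check that the hypotheses of the corollary match exactly those of Theorem~\ref{t:Li-1}. Indeed, $(X,G)$ is a dynamical system, $G$ is amenable, and $(X,G)$ is expansive with the weak specification property; the further assumption that $(X,G)$ be algebraic is not needed at this step. Applying Theorem~\ref{t:Li-1} therefore yields that $(X,G)$ has the Myhill property, i.e.\ every pre-injective endomorphism of $(X,G)$ is surjective.

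Next I would invoke Theorem~\ref{t:Li-2}, which is tailored to the algebraic setting. Here the fact that $X$ is a compact metrizable abelian group on which $G$ acts by continuous group morphisms is essential, since this is what licenses the extra module-theoretic input behind that theorem. Under the same hypotheses of amenability, expansiveness, and weak specification, Theorem~\ref{t:Li-2} gives that $(X,G)$ has the Moore property, i.e.\ every surjective endomorphism of $(X,G)$ is pre-injective. Combining the two conclusions, $(X,G)$ enjoys both properties, and hence the Moore-Myhill property, as required.

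I do not expect any genuine obstacle at the level of the corollary itself: all of the substantive content is packaged inside Theorems~\ref{t:Li-1} and~\ref{t:Li-2}, and the only point deserving attention is the bookkeeping distinction between them, namely that the Myhill half holds for arbitrary dynamical systems satisfying the three dynamical hypotheses, whereas the Moore half genuinely requires the algebraic hypothesis. It is worth recalling, as the remark following Theorem~\ref{t:Li-1} shows via the even-subshift example of Example~\ref{exemple-fiorenzi-5}, that expansiveness together with weak specification alone do not suffice for the Moore property; thus the algebraicity assumption cannot be dropped in the step that invokes Theorem~\ref{t:Li-2}.
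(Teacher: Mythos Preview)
Your proof is correct and matches the paper's approach exactly: the paper simply states that the corollary is an immediate consequence of Theorems~\ref{t:Li-1} and~\ref{t:Li-2}, and your argument unwinds precisely that, with the added (and accurate) observation that the algebraic hypothesis is essential only for the Moore half.
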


\subsection{Principal algebraic dynamical systems}
\label{sec:PADS}

Let $f \in \Z[G]$ and consider the cyclic left $\Z[G]$-module $M_f \coloneqq \Z[G]/ \Z[G]f$ obtained by quotienting the ring $\Z[G]$ by the principal left ideal generated by $f$.
The algebraic dynamical system associated by Pontryagin duality with $M_f$ is denoted by $(X_f,G)$ and is called the \emph{principal algebraic dynamical system}\index{principal algebraic dynamical system}\index{algebraic dynamical system!principal ---} associated with $f$.
\par 
There is a beautiful characterization of expansivity for principal algebraic dynamical systems due to Deninger and Schmidt~\cite[Theorem~3.2]{deninger-schmidt} (see also \cite[Theorem~5.1]{lind-schmidt-survey-heisenberg}).
Let $G$ be a countable group and $f \in \Z[G]$. Then $(X_f,G)$ is expansive if and only if $f$ is invertible in $\ell^1(G)$.
(Here $\ell^1(G)$ denotes the Banach algebra consisting of all formal sums $r = \sum_{g \in G}r_g g$ such that 
$r_g \in \R$ for all $g \in G$ and $\Vert r \Vert_1 \coloneqq \sum_{g \in G} |r_g| < \infty$, 
equipped with its obvious real vector space structure and the convolution product as in \eqref{e:sum+convolution}.)

It turns out (cf.\ \cite[Lemma 2.1]{li-2017}, see also \cite[Theorem 1.2]{ren-2015}), that every expansive principal algebraic action
has the weak specification property. From Corollary \ref{c:Li-1} one immediately deduces (cf.~\cite[Theorem 1.3]{li-2017}):

\begin{corollary}[Garden of Eden theorem for principal expansive algebraic dynamical systems]
\label{c:Li-2}\index{Garden of Eden theorem!--- for principal expansive algebraic dynamical systems}\index{theorem!Garden of Eden --- for principal expansive algebraic dynamical systems}
Let $(X,G)$ be a principal algebraic dynamical system. Suppose that the group $G$ is amenable and that $(X,G)$ is expansive.  
Then $(X,G)$ has the Moore-Myhill property.
\end{corollary}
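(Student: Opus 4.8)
The plan is to deduce the statement directly from Corollary~\ref{c:Li-1}, whose three hypotheses are amenability of $G$, expansiveness of $(X,G)$, and the weak specification property. Two of these, amenability and expansiveness, are granted by assumption, so the only thing I would need to supply is that an expansive \emph{principal} algebraic dynamical system automatically enjoys the weak specification property. First I would record that a principal algebraic dynamical system $(X_f,G)$ is in particular an algebraic dynamical system in the sense of Section~\ref{sec:ADS}: by construction $X_f = \widehat{M_f}$ is a compact metrizable abelian group on which $G$ acts by continuous group morphisms, so Corollary~\ref{c:Li-1} becomes applicable as soon as its hypotheses are verified.

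The substantive point, and the step I expect to be the main obstacle if one were to carry it out from scratch rather than cite it, is establishing the weak specification property for $(X_f,G)$ under the expansiveness assumption. Here I would invoke the Deninger--Schmidt characterization \cite[Theorem~3.2]{deninger-schmidt}, according to which $(X_f,G)$ is expansive precisely when $f$ is invertible in the Banach algebra $\ell^1(G)$. Using the $\ell^1$-inverse of $f$, one constructs, for any prescribed finite patches of target orbits with sufficiently separated supports, a single global point of $X_f$ that $\varepsilon$-traces each patch simultaneously: the rapid ($\ell^1$) decay of the coefficients of $f^{-1}$ confines the mutual interference between distant patches, and choosing the separating set $\Delta$ in Definition~\ref{def:weak-specification} large enough forces the tracing error below $\varepsilon$. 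This is exactly the content of \cite[Lemma~2.1]{li-2017} (see also \cite[Theorem~1.2]{ren-2015}), which I would cite rather than reprove.

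With the weak specification property thus secured, all three hypotheses of Corollary~\ref{c:Li-1} hold for $(X,G)$. Applying that corollary then yields directly that $(X,G)$ has both the Moore and the Myhill properties, that is, the Moore--Myhill property, which completes the argument. The essential mathematical work is therefore entirely absorbed into the implication \emph{expansive principal action} $\Rightarrow$ \emph{weak specification}; once that is available, the deduction is formal.
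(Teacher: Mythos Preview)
Your proposal is correct and follows precisely the paper's own route: the paper deduces the corollary from Corollary~\ref{c:Li-1} after recording (with the same citations to \cite[Lemma~2.1]{li-2017} and \cite[Theorem~1.2]{ren-2015}) that every expansive principal algebraic action has the weak specification property. Your additional remarks on the Deninger--Schmidt characterization and the heuristic role of the $\ell^1$-inverse of $f$ are accurate elaborations, but the logical skeleton is identical to the paper's.
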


In \cite[Theorem 1.1]{csc-goe-principal} we had proved the same result under the stronger assumptions that $G$ is abelian and the
phase space $X$ is connected.
\par
In the case  $G = \Z^d$, the group ring $\Z[G]$ can be identified with the ring $\Z[u_1, u_1^{-1}, \dots, u_d, u_d^{-1}]$ of Laurent polynomials with integral coefficients on $d$ commuting indeterminates.

\begin{example}
\label{e:principal-arnold-cat}\index{Arnold's cat}\index{dynamical system!Arnold's cat ---}
For $G = \Z$ and $f = u^2 - u - 1 \in \Z[u,u^{-1}] = \Z[G]$, one can check that the associated principal algebraic dynamical system
$(X_f,\Z)$ is topologically conjugate to Arnold's cat on $\T^2$ (see e.g.~\cite[Example~2.18.(2)]{schmidt-book}).
Thus, from Corollary~\ref{c:Li-2} we recover that Arnold's cat satisfies the Moore-Myhill property.
\end{example}

In \cite{cscl-goe-homoclinically}, in collaboration with Hanfeng Li,\index{Li, Hanfeng} we introduced a notion of weak expansivity for elements in the integral group ring $\Z[G]$, for any countable group $G$, and proved a Garden of Eden theorem for principal algebraic dynamical systems associated with weakly expansive polynomials. In order to state it, let us first introduce some preliminary material and notation.
We denote by $\CC_0(G)$ the real vector space consisting of all maps $r \colon G \to \R$ such that $\lim_{g \to \infty} r(g) = 0$
(this means that for all $\varepsilon > 0$ there exists a finite subset $F \subset G$ such that $|r(g)| < \varepsilon$ for all
$g \in G \setminus F$). 
Note that if $r \in \CC_0(G)$ and $s \in \Z[G]$ then the map $rs \colon G \to \R$ defined by 
$(rs)(g) = \sum_{g_1,g_2 \in G} r(g_1) s_{g_2}$ for all $g \in G$ (cf.\ \eqref{e:sum+convolution}) belongs to $\CC_0(G)$. 
This endows $\CC_0(G)$ with a structure of a  right $\Z[G]$-module. Moreover $G \subset \Z[G] \subset \CC_0(G)$.

\begin{definition}
\label{def:we}
An element $f \in \Z[G]$ is said to be {\it weakly expansive}\index{weakly expansive polynomial} provided:
\begin{enumerate}[{\rm (we-1)}]
\item $\forall r \in \CC_0(G)$, $fr = 0$ $\Rightarrow$ $r = 0$;
\item $\exists \omega \in \CC_0(G)$ such that $f\omega = 1_G$.
\end{enumerate}
\end{definition}

For principal algebraic dynamical systems with elementary amenable acting group there is a characterization of connectedness of the phase space. First recall that a non-zero element $f \in \Z[G]$ is called \emph{primitive}\index{primitive polynomial} if there is no integer $n \geq 2$ that divides all coefficients of $f$. Also recall (cf.\ for instance \cite{Chou}) that the class of \emph{elementary amenable groups}\index{amenable group!elementary ---}\index{group!elementary amenable ---}\index{elementary!--- amenable group} is the smallest class of groups containing all finite groups and all Abelian groups that is closed under the operations of taking subgroups, quotiens, extensions, and direct limits. In \cite[Proposition 2.4]{cscl-goe-homoclinically} we showed that is $G$ is a countable torsion-free elementary amenable group (e.g.\ $G = \Z^d$) and $f \in \Z[G]$ is non-trivial, then $X_f$ is connected if and only if $f$ is primitive.

We are now in position to state the main result of \cite{cscl-goe-homoclinically} (Theorem 1.1 therein).

\begin{theorem}[Garden of Eden theorem for algebraic actions associated with weakly expansive polynomials]
\label{t:main-result-cscl}\index{Garden of Eden theorem!--- for algebraic actions associated with weakly expansive polynomials}\index{theorem!Garden of Eden --- for algebraic actions associated with weakly expansive polynomials}
Let $G$ be a countable Abelian group and $f \in \Z[G]$.
Suppose that $f$ is weakly expansive and that $X_f$ is connected.
Then the dynamical system $(X_f,G)$ has the Moore-Myhill property.
\end{theorem}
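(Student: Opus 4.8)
The plan is to reduce everything to the structure of the homoclinic group $\Delta(X_f,G)$ and then to invoke Pontryagin duality. First I would use hypothesis (we-2) to manufacture a distinguished homoclinic point. Let $\rho\colon \CC_0(G)\to \T^G$ be reduction modulo $\Z$; its kernel consists exactly of the finitely supported integer-valued functions, i.e.\ $\Z[G]$. Given $\omega\in\CC_0(G)$ with $f\omega=1_G$, the point $x^\Delta\coloneqq\rho(\omega)$ lies in $X_f$ (because $f\omega=1_G$ is integral, so its class in $\T^G$ vanishes) and is homoclinic to $0$ (because $\omega$ vanishes at infinity). Thus $x^\Delta\in\Delta(X_f,G)$, and I will call it the fundamental homoclinic point.

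Next I would identify the whole homoclinic group. Writing $W\coloneqq\{r\in\CC_0(G):fr\in\Z[G]\}$, a lifting argument (choosing representatives in $(-1/2,1/2]$ and using that a function vanishing at infinity with integer values is finitely supported) shows $\Delta(X_f,G)=\rho(W)\cong W/\Z[G]$. For every $p\in\Z[G]$ one has $p\omega\in W$ and $f(p\omega)=p$, and using $f\omega=1_G$ together with the injectivity of multiplication by $f$ provided by (we-1) one checks that $W=\Z[G]\omega$, so that $\Delta(X_f,G)=\rho(W)=\Z[G]\,x^\Delta$, and that the map $p\mapsto p\,x^\Delta=\rho(p\omega)$ has kernel exactly $\Z[G]f$. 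Consequently there is a $\Z[G]$-module isomorphism $\Delta(X_f,G)\cong \Z[G]/\Z[G]f=M_f=\widehat{X_f}$, under which every group endomorphism $\widehat p\colon X_f\to X_f$ (with $p\in\Z[G]$) restricts on $\Delta(X_f,G)$ to multiplication by $p$ on $M_f$.

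The role of connectedness is where the main work lies. On the one hand, as recalled before the statement, connectedness of $X_f$ forces $M_f$ to be torsion-free and should yield that $\Delta(X_f,G)$ is dense in $X_f$; I would prove this by checking, via Pontryagin duality, that the annihilator of $\Z[G]\,x^\Delta$ in $M_f$ is trivial. On the other hand, and this I expect to be the central obstacle, one must reduce an arbitrary continuous equivariant endomorphism $\tau$ to a group endomorphism. Replacing $\tau$ by $\tau-\tau(0)$ (translation by the fixed point $\tau(0)$, which is a homeomorphism preserving both surjectivity and pre-injectivity) reduces to the case $\tau(0)=0$; the issue is then to show that such a $\tau$ is additive. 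Here I would use that an equivariant continuous map carries homoclinic pairs to homoclinic pairs, hence maps $\Delta(X_f,G)$ into itself, together with the density above and the rigidity coming from weak expansivity, in order to conclude that $\tau$ agrees on the dense subgroup $\Delta(X_f,G)$ with a module endomorphism of $M_f$, that is, with some $\widehat p$; continuity then forces $\tau=\widehat p$ on all of $X_f$. Establishing this affineness is the crux, and it is the place where I anticipate the genuine difficulty and the essential use of connectedness.

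Once the reduction is in place, both implications follow at once from Pontryagin duality. For a group endomorphism $\widehat p$, surjectivity is equivalent to injectivity of the dual map $m_p\colon M_f\to M_f$, while pre-injectivity means precisely that $\widehat p$ is injective on $\Delta(X_f,G)\cong M_f$, i.e.\ again that $m_p$ is injective. Hence $\widehat p$ is surjective if and only if it is pre-injective, which yields simultaneously the Moore property (surjective $\Rightarrow$ pre-injective) and the Myhill property (pre-injective $\Rightarrow$ surjective), and therefore the Moore--Myhill property for $(X_f,G)$.
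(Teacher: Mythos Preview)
Your proposal follows the same two-ingredient strategy that the paper describes: (i) identify the homoclinic group $\Delta(X_f,G)$ as the cyclic $\Z[G]$-module generated by a fundamental homoclinic point built from $\omega$, and show it is dense in $X_f$; (ii) prove a rigidity result to the effect that every endomorphism of $(X_f,G)$ is affine with linear part $x\mapsto rx$ for some $r\in\Z[G]$; then finish via Pontryagin duality, reducing both surjectivity and pre-injectivity to injectivity of the same multiplication map on the dual module. Two small discrepancies are worth noting. First, the paper records $\Delta(X_f,G)\cong \Z[G]/\Z[G]f^*$ rather than $\Z[G]/\Z[G]f$; since $G$ is abelian the involution $*$ is a ring automorphism of $\Z[G]$, so the two modules are isomorphic and your final duality step is unaffected, but you should re-check the bookkeeping. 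Second, the paper obtains density of $\Delta(X_f,G)$ from weak expansivity alone, not from connectedness; connectedness is used only in the rigidity step.

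On the rigidity step itself---which you correctly flag as the crux---the paper does not proceed via your suggested route (restrict $\tau$ to $\Delta$, recognise a module endomorphism there, and extend by density). Instead it invokes a generalization of a theorem of Bhattacharya on affine actions on connected compact abelian groups. Your route would still need to explain why $\tau|_{\Delta}$ is \emph{additive}; equivariance gives $\Z[G]$-linearity only once additivity is known, and nothing in ``$\tau$ preserves homoclinicity classes'' forces additivity on $\Delta$. This is precisely the point where connectedness of $X_f$ enters in the paper's argument, and a Bhattacharya-type rigidity result (rather than an argument internal to $\Delta$) appears to be needed.
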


There are two main ingredients in our proof of Theorem \ref{t:main-result-cscl}.
The first one is a rigidity result (a generalization of \cite[Corollary 1]{bhattacharya})
for algebraic dynamical systems associated with weakly expansive polynomials and with connected phase space.
We used it to prove that, under the above conditions, every endomorphism of $(X_f,G)$ is affine with linear part of the form
$x \mapsto r x$ for some $r \in \Z[G]$.
The second one, a generalization of \cite[Lemma 4.5]{lind-schmidt}),
asserts that, if $f$ is weakly expansive, then the homoclinic group $\Delta(X_f,G)$, equipped with
the induced action of $G$, is dense in $X_f$ and isomorphic, as a $\Z[G]$-module, to $\Z[G]/ \Z[G] f^*$, where
$f^* \in \Z[G]$ is defined by $(f^*)_g \coloneqq f_{g{-1}}$ for all $g \in G$.

In \cite[Corollary 3.12]{cscl-goe-homoclinically} we showed that if $f \in \Z[G]$ and the associated principal algebraic dynamical system 
$(X_f,G)$ is expansive then $f$ is weakly expansive. It follows that Theorem \ref{t:main-result-cscl} constitutes a generalization of the main result in \cite{csc-goe-principal}.

Recall that a polynomial $f \in \R[G]$ is said to be \emph{well-balanced}\index{well-balanced polynomial} (cf.\ \cite[Definition 1.2]{Bowen-Li}) if the following conditions are satisfied:
\begin{enumerate}[{\rm (wb-1)}]
\item $\sum_{g \in G} f_g = 0$,
\item $f_g \leq 0$ for all $g \in G \setminus \{1_G\}$,
\item $f_g = f_{g^{-1}}$ for all $g \in G$ (i.e., $f$ is \emph{self-adjoint}),
\item and $\supp(f) \coloneqq \{g \in G: f_g \neq 0\}$, the \emph{support} of $f$, generates $G$.\index{support!--- of a polynomial}
\end{enumerate}

If $f\in \Z[G]$ is well-balanced, the associated dynamical system $(X_f, G)$ is called a \emph{harmonic model}.\index{harmonic model}\index{dynamical system!harmonic model ---}
For $G = \Z^d$, the Laurent polynomial $f = 2d - \sum_{i=1}^d (u_i + u_i^{-1}) \in \Z[u_1, u_1^{-1}, \ldots, u_d, u_d^{-1}] = \Z[\Z^d]$ is
well-balanced and the corresponding harmonic model $(X_f,\Z^d)$  shares many interesting measure theoretic and entropic properties with other different models in mathematical physics, probability theory, and dynamical systems such as the Abelian sandpile model, spanning trees,
and the dimer models \cite{schmidt-verbitskiy, Bowen-Li}.
Since a well-balanced polynomial $f \in \Z[G]$, with $G$ infinite countable not virtually $\Z$ or $\Z^2$,
is weakly expansive (\cite[Proposition 3.14]{cscl-goe-homoclinically}), from Theorem \ref{t:main-result-cscl} we deduce 
(cf.\ \cite[Corollary 1.4]{cscl-goe-homoclinically}):

\begin{corollary}[Garden of Eden theorem for harmonic models]
\label{c:harmomic}\index{Garden of Eden theorem!--- for harmonic models}\index{theorem!Garden of Eden --- for harmonic models}
Let $G$ be an infinite countable Abelian group which is not virtually $\Z$ or $\Z^2$  (e.g.~$G = \Z^d$, with $d \geq 3$).
Suppose that $f \in \Z[G]$ is well-balanced and that $X_f$ is connected.
Then the dynamical system $(X_f,G)$ has the Moore-Myhill property.
\end{corollary}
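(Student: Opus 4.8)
The plan is to obtain the statement as an immediate specialization of Theorem~\ref{t:main-result-cscl}. That theorem delivers the Moore-Myhill property for $(X_f, G)$ under three hypotheses: $G$ countable Abelian, $f \in \Z[G]$ weakly expansive, and $X_f$ connected. Two of these are handed to us verbatim in the corollary (countable Abelian $G$ and connected $X_f$), so the whole argument collapses to verifying the one remaining hypothesis, namely that a well-balanced $f$ satisfying the stated arithmetic restriction on $G$ is weakly expansive in the sense of Definition~\ref{def:we}. Once that single point is secured, I would feed $f$ into Theorem~\ref{t:main-result-cscl} and read off the conclusion.

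To certify weak expansiveness I would unwind Definition~\ref{def:we} against the shape of a well-balanced polynomial. First note that since $\supp(f)$ is finite (as $f \in \Z[G]$) yet generates $G$ by (wb-4), the group $G$ is automatically finitely generated; being infinite Abelian, it is therefore virtually $\Z^d$ for some $d$, and the exclusion of virtually $\Z$ and virtually $\Z^2$ forces $d \geq 3$. The well-balanced conditions (wb-1)--(wb-3) then present $f$ exactly as a (negated) symmetric random-walk Laplacian on $G$: one has $f_{1_G} = \sum_{g \neq 1_G} |f_g| > 0$, the off-diagonal weights are nonpositive, and the whole operator is self-adjoint. Condition (we-1), that $fr = 0$ with $r \in \CC_0(G)$ implies $r = 0$, is then the elementary fact that a harmonic function decaying to zero at infinity cannot attain a nonzero extremum, hence vanishes. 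The substantive condition is (we-2): the existence of $\omega \in \CC_0(G)$ with $f\omega = 1_G$ is precisely the demand for a Green's function of the associated walk that decays at infinity.

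The main obstacle is entirely this transience input, which is supplied by Proposition~3.14 of \cite{cscl-goe-homoclinically}. By the classification of recurrent groups, a finitely generated group carries a transient symmetric finitely-supported random walk exactly when it is neither virtually $\Z$ nor virtually $\Z^2$; in those two excluded low-rank cases the walk is recurrent, the Green's function fails to lie in $\CC_0(G)$, and (we-2) genuinely breaks down. Since our hypothesis removes precisely these two cases, transience holds, the decaying Green's function $\omega$ exists, and $f$ is weakly expansive. With weak expansiveness in hand alongside the given connectedness of $X_f$ and the Abelianness of $G$, Theorem~\ref{t:main-result-cscl} applies verbatim and yields the Moore-Myhill property for $(X_f, G)$. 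All the real work thus resides in the cited transience argument rather than in the deduction, which is purely a matter of checking hypotheses.
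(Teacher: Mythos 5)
Your proposal follows exactly the paper's route: the paper likewise reduces the corollary to Theorem~\ref{t:main-result-cscl} by invoking \cite[Proposition~3.14]{cscl-goe-homoclinically} for the fact that a well-balanced $f\in\Z[G]$, with $G$ infinite countable and not virtually $\Z$ or $\Z^2$, is weakly expansive. Your additional sketch of why that proposition holds (maximum principle for (we-1), transience of the associated symmetric finitely supported walk and the decaying Green's function for (we-2)) is a correct gloss on the cited result, but the logical structure of the deduction is identical to the paper's.
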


If $G = \Z^d$, then any polynomial $f \in \R[G]$ may be regarded, by duality, as a function on $\widehat{G} = \T^d$.
We denote by $Z(f) \coloneqq \{(t_1, t_2, \ldots, t_d) \in \T^d: f(t_1, t_2, \ldots, t_d) = 0\}$ its zero-set.
Recall that an irreducible polynomial $f$ is \emph{atoral}\index{atoral polynomial} \cite[Definition 2.1]{lind-schmidt-verbitskiy-2} if there is some
$r\in \Z[G]$ such that $r \not \in \Z[G] f$ and $Z(f)\subset Z(r)$. This is equivalent to the condition
$\dim Z(f)\le d-2$, where the meaning of $\dim(\cdot)$ is explained in \cite[page 1063]{lind-schmidt-verbitskiy-2}; in particular,
one has $\dim(\varnothing) \coloneqq - \infty$.
Also remark that, if $d = 1$, an irreducible polynomial $f \in \Z[\Z] = \Z[u_1,u_1^{-1}]$ is atoral if and only if $Z(f) = \varnothing$
and this, in turn, is equivalent to $(X_f,\Z)$ being expansive (cf.\ \cite[Lemmma 2.1.(1)]{lind-schmidt}).
\par
We are now in position to state the following (cf.\ \cite[Theorem 1.5]{cscl-goe-homoclinically}):

\begin{theorem}[A Garden of Eden theorem for irreducible atoral polynomials]
\label{t:GOE-irr-finite-zero-set}\index{Garden of Eden theorem!--- for irreducible atoral polynomials}\index{theorem!Garden of Eden --- for irreducible atoral polynomials}
Let $f\in \Z[\Z^d]$ be an irreducible atoral polynomial such that $Z(f)$ is contained in the image of the intersection of $[0,1]^d$ and a finite union of hyperplanes in $\R^d$ under the natural quotient map $\R^d \to \T^d$ (e.g., when $d \geq 2$ such that $Z(f)$ is finite).
Then the dynamical system $(X_f,\Z^d)$ has the Moore-Myhill property.
\end{theorem}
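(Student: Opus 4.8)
The plan is to deduce the statement from Theorem~\ref{t:main-result-cscl}, so that the whole task reduces to verifying two things about $f$: that the phase space $X_f$ is connected, and that $f$ is weakly expansive in the sense of Definition~\ref{def:we}. The connectedness is the easy half. If some integer $n \geq 2$ divided every coefficient of $f$, then $f = n\cdot(f/n)$ would exhibit $n$ as a non-unit factor; by irreducibility this forces $f/n$ to be a unit, i.e.\ $f$ is an integer multiple of a monomial and $Z(f)=\varnothing$. That expansive case is already covered by Corollary~\ref{c:Li-2}, so we may assume $f$ is primitive. Since $\Z^d$ is a torsion-free elementary amenable group, \cite[Proposition 2.4]{cscl-goe-homoclinically} then gives that $X_f$ is connected.

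The real work is to prove that $f$ is weakly expansive. Here I would pass to the dual side and regard $f$ as the trigonometric polynomial $t \mapsto f(t)$ on $\widehat{\Z^d} = \T^d$, with $Z(f)$ its zero set. The crux is the integrability statement
\begin{equation}
\label{e:1overf-L1}
\frac{1}{f} \in L^1(\T^d).
\end{equation}
Granting \eqref{e:1overf-L1}, I would define $\omega \colon \Z^d \to \R$ to be the sequence of Fourier coefficients of $1/f$. The Riemann--Lebesgue lemma gives $\omega \in \CC_0(\Z^d)$, and since the Fourier transform carries the pointwise product $f \cdot (1/f) = 1$ to the convolution identity $\delta_{1_{\Z^d}}$, one obtains $f\omega = 1_{\Z^d}$, which is condition (we-2). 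Condition (we-1) would be obtained from the same circle of ideas together with irreducibility: if $r \in \CC_0(\Z^d)$ satisfies $fr = 0$, then on the dual side $\widehat r$ is a distribution supported on the thin set $Z(f)$, and the atoral hypothesis (which forces $\dim Z(f) \leq d-2$) together with the decay $r \in \CC_0$ is used to conclude $r = 0$. Once (we-1) and (we-2) hold, $f$ is weakly expansive and Theorem~\ref{t:main-result-cscl} applies.

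The main obstacle is therefore \eqref{e:1overf-L1}, i.e.\ the local integrability of $1/|f|$ near $Z(f)$, and this is exactly where both the atoral hypothesis and the hyperplane-containment hypothesis are needed. My plan for this step is local: cover $Z(f)$ by finitely many charts subordinate to the finitely many hyperplanes whose toral image contains it. In a chart attached to one such hyperplane $H$, choose coordinates in which $H$ becomes a coordinate hyperplane, so that $Z(f)$ lies inside a coordinate subspace of codimension at least two (this is where atorality, $\dim Z(f) \le d-2$, is combined with the hyperplane structure). One then bounds $|f|$ from below by a constant times the distance to $Z(f)$ --- a polynomial lower bound of \L ojasiewicz type, made explicit by the coordinate choice --- and reduces the estimate to the elementary fact that $\int_{\{|x| \le \varepsilon\}} |x|^{-1}\,dx$ over a codimension-$k$ neighborhood converges precisely when $k \geq 2$, because the radial integral $\int_0^\varepsilon r^{-1} r^{\,k-1}\,dr$ is finite for $k \geq 2$. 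Summing the finitely many local contributions then yields \eqref{e:1overf-L1}.

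The delicate points to watch are that the lower bound on $|f|$ must be uniform along $Z(f)$ and must account for the order of vanishing of $f$: a codimension-two zero set would still break \eqref{e:1overf-L1} if $f$ vanished there to order two or more. The irreducibility of $f$ (forcing $Z(f)$ to be reduced and keeping the generic order of vanishing equal to one) and the special position of $Z(f)$ inside the prescribed hyperplanes are precisely what control this order and make the codimension-two estimate suffice. I expect this uniform \L ojasiewicz bound near the singular locus of $Z(f)$ to be the hardest part of the argument.
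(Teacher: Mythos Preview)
The paper does not prove this theorem; it only states it and refers to \cite[Theorem~1.5]{cscl-goe-homoclinically}. Your overall strategy---reduce to Theorem~\ref{t:main-result-cscl} by checking connectedness of $X_f$ and weak expansiveness of $f$---is the natural one and is almost certainly the route taken in \cite{cscl-goe-homoclinically}. The connectedness step is fine.

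There is, however, a genuine gap in your argument for $1/f \in L^1(\T^d)$. Your claimed first-order lower bound $|f| \gtrsim \dist(\,\cdot\,, Z(f))$ is false already for $f = 2 - u_1 - u_2$ on $\T^2$ (item~(2) in the list of examples following the theorem): writing $u_j = e^{2\pi i s_j}$ near the unique zero $(1,1)$, one finds $f \approx -2\pi i(s_1 + s_2) + 2\pi^2(s_1^2 + s_2^2)$, so along the line $s_1 + s_2 = 0$ the modulus $|f|$ vanishes to \emph{second} order in $r = (s_1^2+s_2^2)^{1/2}$. Irreducibility only controls the order of vanishing of $f$ on its \emph{complex} zero variety; it says nothing about the order of $|f|$ on the real torus, where the tangential and normal behaviour of $f$ decouple. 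The ``codimension $\geq 2$, hence $\int r^{-1} r^{k-1}\,dr$ converges'' check therefore does not go through as written.

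The integral $\int_{\T^2} |2-u_1-u_2|^{-1}$ does in fact converge, but for a subtler reason: one must exploit the imaginary part $\sim |s_1+s_2|$ and the real part $\sim s_1^2+s_2^2$ simultaneously, and the hyperplane hypothesis is exactly what makes such a two-scale estimate tractable in general---it is doing real work, not merely supplying convenient coordinates. Your sketch of (we-1) has a parallel gap: that an element of the dual of $\CC_0$ supported on a set of dimension $\leq d-2$ must vanish is a nontrivial spectral-synthesis statement and does not follow from the dimension bound alone. Both points are handled in \cite{cscl-goe-homoclinically} using the harmonic-analytic machinery of \cite{lind-schmidt-verbitskiy-2}, which is where the hyperplane condition originates.
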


\begin{examples}
Here below, we present some examples of irreducible atoral polynomials $f \in \Z[\Z^d]$, mainly from 
\cite[Section 3]{lind-schmidt-verbitskiy} and \cite[Section 4]{lind-schmidt-verbitskiy-2}. We can then apply Theorem \ref{t:GOE-irr-finite-zero-set} and deduce
that the corresponding algebraic dynamical systems $(X_f, \Z^d)$ satisfy the Garden of Eden theorem.

\begin{enumerate}[{\rm (1)}]
\item Let $d=1$ and $f(u) = u^2 - u - 1 \in \Z[u,u^{-1}] = \Z[\Z]$ (cf.\ Example \ref{e:principal-arnold-cat}).\index{Arnold's cat}\index{dynamical system!Arnold's cat ---}
Then $f$ is irreducible and, since $Z(f) = \varnothing$, atoral.
Recall that the associated principal algebraic dynamical system $(X_f,\Z)$ is conjugated to \emph{Arnold's cat}.
Thus we get yet another proof of the fact that this hyperbolic dynamical systems satisfies the Garden of Eden theorem.

\item Let $d = 2$ and $f(u_1,u_2) = 2 - u_1 - u_2 \in \Z[u_1,u_1^{-1},u_2,u_2^{-1}] = \Z[\Z^2]$. Then
$Z(f) = \{(1,1)\}$, and so $f$ is atoral.
Note that, in fact, $f$ is weakly expansive (though not well-balanced) by \cite[Example 6.2.(2)]{cscl-goe-homoclinically}.
Moreover, $f$ is also primitive, so that, by the characterization we presented above, $X_f$ is connected.
Applying Theorem \ref{t:main-result-cscl}, we obtain an alternative proof of the fact that $(X_f,\Z^2)$ has the Moore-Myhill property.

\item Let $d=2$, and consider the \emph{Laplace harmonic model} $f(u_1,u_2) = 4 - u_1-u_1^{-1}-u_2-u_2^{-1} \in \Z[u_1,u_1^{-1},u_2,u_2^{-1}] = \Z[\Z^2]$.\index{Laplace harmonic model}\index{dynamical system!Laplace harmonic model ---} 
One has $Z(f)=\{(1, 1)\}$. Thus $f$ is atoral and $(X_f, \alpha_f)$ satisfies the Garden of Eden theorem, by virtue of Theorem \ref{t:GOE-irr-finite-zero-set}. (Note that we cannot apply Theorem \ref{t:main-result-cscl}.)

\item Let $d=2$, and $f(u_1,u_2) = 1 + u_1 + u_2 \in \Z[u_1,u_1^{-1},u_2,u_2^{-1}] = \Z[\Z^2]$. Then $Z(f)=\{(\omega, \omega^2),
(\omega^2, \omega)\}$, where $\omega = \exp(2\pi i/3)$. The algebraic dynamical system $(X_f, \Z^2)$ is called the \emph{connected Ledrappier subhift}.\index{Ledrappier subshift!connected ---}\index{subshift!connected Ledrappier ---} Thus the connected Ledrappier shift satisfies the Garden of Eden theorem.
On the other hand, the \emph{(disconnected) Ledrappier shift} (cf.\ Example~\ref{ex:ledrappier-sub})\index{Ledrappier subshift}\index{subshift!Ledrappier ---}
$X \coloneqq \{x \in (\Z/2\Z)^{\Z^2}: x(m,n)+x(m+1,n)+x(m,n+1)=0\}$ (which may be regarded as an algebraic dynamical
system with phase space $\widehat{\Z[\Z^2]/I}$, where $I = 2\Z[\Z^2] + f\Z[\Z^2]$ is the ideal generated by $2$ and 
$f(u_1,u_2) = 1+u_1+u_2 \in \Z[u_1,u_1^{-1},u_2,u_2^{-1}] = \Z[\Z^2]$) does not satisfy the Garden on Eden theorem. 
Indeed, one has $\Delta(X,\Z^2) = \{0_{(\Z/2\Z)^{\Z^2}}\}$ so that every map $\tau \colon X \to X$ is pre-injective. 
This ensures the Moore property for $(X,\Z^2)$. 
However, the constant map $x \mapsto 0_{(\Z/2\Z)^{\Z^2}}$ (which is a pre-injective endomorphism of $(X,\Z^2)$) is clearly not surjective, showing that $(X,\Z^2)$ does not satisfy the Myhill property.

\item Let $d=3$ and $f(u_1,u_2,u_3) = 3 + 3u_1 - 3 u_1^3 + u_1^4 - u_2 - u_3  \in \Z[u_1,u_1^{-1},u_2,u_2^{-1}, u_3,u_3^{-1}] = \Z[\Z^3]$.
One has has $Z(f)=\{(\eta, \overline{\eta}, \overline{\eta}), (\overline{\eta},\eta,\eta)\}$, where $\eta$ is an algebraic integer.
\end{enumerate}
\end{examples}

\begin{remark} 
\label{r:new}
Let $d=1$ and $f = 2 - u - u^{-1} \in \Z[u,u^{-1}] = \Z[\Z]$.\index{Laplace harmonic model}\index{dynamical system!Laplace harmonic model ---}
Then the associated dynamical system $X_f = \{x \in \T^\Z: x(n-1)+x(n+1) = 2x(n) \mbox{ for all } n \in \Z\}$ is the one-dimensional
\emph{Laplace harmonic model}. It is easy to see that $\Delta(X_f,\alpha_f) = \{0_{\T^\Z}\}$.
Then $(X_f,\alpha_f)$ satisfies the Moore property but not the Myhill property
(the constant map $x \mapsto 0_{\T^\Z}$ (which is a pre-injective endomorphism of $(X_f,\alpha_f)$) is clearly not surjective).
\end{remark}

We then have (cf.\ \cite[Corollary 1.6]{cscl-goe-homoclinically}):

\begin{corollary}[Garden of Eden theorem for Laplace harmonic models]
\label{c:laplace}\index{Garden of Eden theorem!--- for Laplace harmonic models}\index{theorem!Garden of Eden --- for Laplace harmonic models}
The Laplace harmonic model\index{Laplace harmonic model}\index{dynamical system!Laplace harmonic model ---} (i.e. the principal algebraic dynamical 
system $(X_f,\alpha_f)$ associated with the polynomial 
$f = 2d - \sum_{i=1}^d (u_i + u_i^{-1}) \in \Z[u_1, u_1^{-1}, \ldots, u_d, u_d^{-1}] = \Z[\Z^d])$
satisfies the Moore-Myhill property if and only if $d \geq 2$.
\end{corollary}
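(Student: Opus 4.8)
The plan is to prove the equivalence by splitting the argument according to the value of $d$, treating $d=1$, $d=2$, and $d\ge 3$ separately, and invoking the appropriate general theorem established above in each range. First I would record the two elementary facts about $f=2d-\sum_{i=1}^d(u_i+u_i^{-1})$ that feed those theorems. The polynomial is well-balanced: conditions (wb-1)--(wb-4) hold immediately, since $\sum_{g}f_g=2d-2d=0$, the off-identity coefficients $f_{\pm e_i}=-1$ are nonpositive and symmetric, and $\supp(f)=\{0,\pm e_1,\dots,\pm e_d\}$ generates $\Z^d$. Moreover $f$ is primitive, because the $\gcd$ of its coefficients must divide the coefficient $-1$; since $\Z^d$ is torsion-free elementary amenable, the characterization quoted before Theorem~\ref{t:main-result-cscl} then shows that $X_f$ is connected. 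Finally, regarding $f$ as a function on $\widehat{\Z^d}=\T^d$ and writing $t_j=e^{2\pi i\theta_j}$, one computes $f(t)=\sum_{j=1}^d(2-t_j-t_j^{-1})=\sum_{j=1}^d|1-t_j|^2\ge 0$, so that $Z(f)=\{(1,1,\dots,1)\}$ is a single point.

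For the failure direction, $d=1$ is exactly Remark~\ref{r:new}: here $f=2-u-u^{-1}$, one has $\Delta(X_f,\alpha_f)=\{0_{\T^\Z}\}$, so every self-map of $X_f$ is pre-injective and the Moore property holds vacuously, while the zero endomorphism $x\mapsto 0_{\T^\Z}$ is pre-injective but not surjective, so the Myhill property fails. Hence $(X_f,\alpha_f)$ does not satisfy the Moore--Myhill property when $d=1$ (consistent with the fact that $f=-u^{-1}(u-1)^2$ is not irreducible). Since $d\ge 1$ always, this is the only case where $d\ge 2$ fails, so it suffices to establish the Moore--Myhill property for all $d\ge 2$.

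For $d\ge 3$, the group $\Z^d$ is not virtually $\Z$ or $\Z^2$, the polynomial $f$ is well-balanced, and $X_f$ is connected, so Corollary~\ref{c:harmomic} applies directly and yields the Moore--Myhill property. (Equivalently, Proposition~3.14 of \cite{cscl-goe-homoclinically} gives that $f$ is weakly expansive in this range, so Theorem~\ref{t:main-result-cscl} applies.) The remaining case $d=2$ is the delicate one, because Corollary~\ref{c:harmomic} is unavailable ($\Z^2$ is virtually $\Z^2$) and $f$ need not be weakly expansive, so I would instead invoke Theorem~\ref{t:GOE-irr-finite-zero-set}. Its hypotheses require that $f$ be irreducible and atoral with finite zero-set: we have $Z(f)=\{(1,1)\}$, whence $\dim Z(f)=0=d-2$ and $f$ is atoral, and $d=2\ge 2$ places us in the ``$Z(f)$ finite'' instance of the theorem.

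The one genuinely nontrivial point, which I expect to be the main obstacle, is the irreducibility of $f$ in $\Z[\Z^2]$. Concretely, up to the unit $u_1u_2$ the polynomial $f$ equals $-(u_1^2u_2+u_1u_2^2-4u_1u_2+u_1+u_2)$; regarding $p\coloneqq u_1u_2^2+(u_1^2-4u_1+1)u_2+u_1$ as a quadratic in $u_2$ over $\Z[u_1]$, its discriminant factors as $(u_1^2-6u_1+1)(u_1-1)^2$, and since $u_1^2-6u_1+1$ is squarefree and not a perfect square in $\Z[u_1]$, this discriminant is not a square in $\Q(u_1)$, so $p$ has no root there and is irreducible as a quadratic over $\Q(u_1)$. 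As $p$ is also primitive as a polynomial in $u_2$ over $\Z[u_1]$ (its coefficients $u_1$ and $u_1^2-4u_1+1$ are coprime), Gauss's lemma gives irreducibility in $\Z[u_1,u_2]$, hence in $\Z[\Z^2]$. With irreducibility and atorality established, Theorem~\ref{t:GOE-irr-finite-zero-set} delivers the Moore--Myhill property for $d=2$, and combining the three cases proves the stated equivalence.
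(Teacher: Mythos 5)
Your proposal is correct and follows essentially the same route the paper takes: Remark~\ref{r:new} for the failure at $d=1$, Corollary~\ref{c:harmomic} (well-balanced, connected, $\Z^d$ not virtually $\Z$ or $\Z^2$) for $d\ge 3$, and Theorem~\ref{t:GOE-irr-finite-zero-set} for $d=2$, exactly as in the paper's Example (3). Your explicit verification that $4-u_1-u_1^{-1}-u_2-u_2^{-1}$ is irreducible in $\Z[\Z^2]$ (via the discriminant factorization $(u_1^2-6u_1+1)(u_1-1)^2$ and Gauss's lemma) is a correct and welcome addition, since the survey asserts atorality, and hence implicitly irreducibility, of this polynomial without proof.
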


\section{Some Additional topics}
\label{sec:additional}

\subsection{Infinite alphabets and uncountable groups}
The notion of a subshift and that of a cellular automaton between subshifts can be extended to the case where the alphabet sets are infinite and the group is not countable.
\par
More specifically, let $G$ be a (possibly uncountable) group and $A$ a (possibly infinite) set.
The \emph{prodiscrete topology}\index{prodiscrete!--- topology} on $A^G$ is the product topology obtained by taking the discrete topology on each factor $A$ of $A^G  = \prod_{g \in G} A$.
The prodiscrete topology on $A^G$ is not metrizable as soon as $G$ is uncountable and $A$ contains more than one element.
However, this topology is induced by the \emph{prodiscrete uniform structure}\index{prodiscrete!--- uniform structure} on $A^G$, that is, the product uniform structure on $A^G$ obtained by taking the discrete uniform structure on each factor $A$ of $A^G$ (see \cite[Appendix~B]{book} for more details).
A subset $X \subset A^G$ is called a \emph{subshift}\index{subshift} if $X$ is invariant under the shift action and closed for the prodiscrete topology.
\par  
Let  $G$ be a  group and let $A, B$ be   sets.
Suppose that $X \subset A^G$ and $Y \subset B^G$ are two subshifts.
One defines cellular automata between $X$ and $Y$ exactly as in
Definition~\ref{def:ca}.
Every cellular automaton $\tau \colon X \to Y$ is continuous with respect to the topologies on $X$ and $Y$ induced by the prodiscrete topologies on $A^G$ and $B^G$.
The converse is false in general \cite[Section~4]{csc-curtis-hedlund}, \cite[Example~1.8.2]{book}.
However, the Curtis-Hedlund-Lyndon theorem (cf.~Theorem~\ref{t:curtis-hedlund-lyndon}) admits the following generalization \cite[Theorem~1.1]{csc-curtis-hedlund}, \cite[Theorem~1.8.1]{book}: a map $\tau \colon X \to Y$ is a cellular automaton if and only if it is equivariant 
(with respect to the $G$-shift actions) and uniformly continuous
(for  the uniform structures on $X$ and $Y$ induced by the prodiscrete uniform structures on $A^G$ and  $B^G$).
\par 
One can extend the notion of amenability defined only for countable groups in Section~\ref{ss:amenable-groups} 
by declaring  that a general group  $G$ is \emph{amenable}\index{amenable group}\index{group!amenable ---} if all of its finitely generated subgroups are amenable. This extension makes sense since every finitely generated group is countable and every subgroup of a countable amenable group is itself amenable.
The Garden of Eden theorem (cf.~Theorem~\ref{t:GOE-amen})
remains valid in this more general setting:
if $G$ is a (possibly uncountable) amenable group, $A$ a finite set, and $\tau \colon A^G \to A^G$ a cellular automaton,
then $\tau \colon A^G \to A^G$ is surjective if and only if it is pre-injective.
The proof can be reduced to the case when the group $G$ is finitely generated (and hence countable) by using the operations of \emph{restriction} and \emph{induction} for cellular automata
(see~\cite{induction} and \cite[Section~1.7]{book}).
One can also directly follow the proof given above for Theorem~\ref{t:GOE-amen}
by replacing the F\o lner sequence by a F\o lner net (see \cite[Theorem~5.8.1]{book}).  

\subsection{Linear cellular automata}
Let $G$ be a group and let $K$ be a field.
Let $A$ be a finite-dimensional vector space over $K$ and set $d = \dim_K(A)$.
Observe that $A$ is infinite as soon as the field $K$ is infinite (e.g.~$K = \R$) and $d \not= 0$.
Taking $A$ as an alphabet, the configuration set $A^G$ inherits a natural product vector space structure.
The support of a configuration $x \in A^G$ is the subset $\supp(x) \coloneqq  \{g \in G: x(g) \neq 0 \} \subset G$.
Thus, $x \in A^G$ has finite support if and only if it is almost equal to the constant zero-configuration.
We denote by $A[G] \subset A^G$ the vector subspace consisting of all configurations with finite support.
\par
Recall that an \emph{involutive $K$-algebra}\index{involutive $K$-algebra} is a $K$-algebra equipped with an involution that is 
a $K$-algebra anti-automorphism.
\par 
The vector space $K[G]$ has a natural additional structure of an involutive $K$-algebra.
The multiplication on $K[G]$ is the \emph{convolution product}\index{convolution product} defined by
\[
(\alpha \beta)(g) \coloneqq
\sum_{\substack{g_1, g_2 \in G:\\ g_1g_2 = g}}  \alpha(g_1) \beta(g_2)
= \sum_{h \in G} \alpha(h) \beta(h^{-1} g)
\]
for all $\alpha, \beta \in K[G]$ and $g \in G$,
and  the involution is the map $\alpha \mapsto \alpha^*$ given by
 \[
 \alpha^*(g) \coloneqq  \alpha(g^{-1})
 \]
 for all $\alpha \in K[G]$ and $g \in G$.
This involutive $K$-algebra  is called the \emph{group algebra}\index{group!--- algebra} of the group $G$ with coefficients in $K$.
Note that the group $G$ embeds as a subgroup of the group of invertible elements of $K[G]$ via the map 
$g \mapsto \delta_g$, where $\delta_g \in K[G]$ is defined by $\delta_g(g) = 1$ and $\delta_g(h) = 0$ for all $h \in G$ with $h \not= g$,
and  that $G \subset K[G]$ is a base for the vector space $K[G]$.
\par
The $G$-shift action on $A^G$ is clearly $K$-linear, so that it yields a left $K[G]$-module structure on $A^G$. 
Observe that $A[G]$ is a submodule of $A^G$.
\par
A \emph{linear cellular automaton}\index{linear cellular automaton}\index{cellular automaton!linear ---} over the group $G$ and the alphabet $A$ is a cellular automaton 
$\tau \colon A^G \to A^G$ that is $K$-linear with respect to the vector space structure on $A^G$ 
(if $S$ is any memory set for $\tau$ and $\mu \colon A^S \to A$ is the associated local defining map, 
this is equivalent to requiring that $\mu$ is $K$-linear).
\par
Let us denote by $\LCA(G;A)$ the vector space consisting of all linear cellular automata $\tau \colon A^G \to A^G$.
\par
Let $\tau \in \LCA(G;A)$. 
Note that $A[G]$ is stable under $\tau$. Indeed, if $S \subset G$ is a memory set for $\tau$, then
$\supp(\tau(x)) \subset \supp(x)S^{-1}$ for all $x \in A^G$) (see~\cite[Proposition 8.2.3]{book}).
Moreover,  $\tau$ is pre-injective if and only if $\tau\vert_{A[G]} \colon A[G] \to A[G]$ is injective
(cf.~\cite[Proposition 8.2.5]{book}).
Observe also that $\tau$ is a $K[G]$-module endomorphism of $A^G$ and hence of $A[G]$.
\par
The vector space $\LCA(G;A)$ has a natural structure of a $K$-algebra with the composition of maps
as the multiplicative operation.
Furthermore, the restriction map $\tau \mapsto \tau\vert_{A[G]}$ yields a
$K$-algebra isomorphism from $\LCA(G;A)$ onto $\End_{K[G]}(A[G])$, the endomorphism $K$-algebra of the $K[G]$-module
$A[G]$ (cf.~\cite[Theorem 8.7.6]{book}).
It turns out  that $A[G]$ is a free $K[G]$-module with rank $d$.
Actually, if $(e_i)_{1 \leq i \leq d}$ is a base for the vector space $A$, then the family $(x_i)_{1 \leq i \leq d}$, where $x_i \in A[G]$ is the configuration defined by
$x_i(1_G) = e_i$ and $x_i(g) = 0$ for $g \not= 1_G$, is a free base for the $K[G]$-module $A[G]$
(cf.~\cite[Proposition 8.7.3]{book}).
One deduces that $\End_{K[G]}(A[G])$ is isomorphic, as a $K$-algebra,
to the $K$-algebra $\M_d(K[G])$ of $d \times d$ matrices with coefficients in the group algebra $K[G]$.
It follows that $\LCA(G;A)$ and $\M_d(K[G])$ are isomorphic as $K$-algebras 
(cf.~\cite[Corollary~8.7.8]{book}).
For instance, the map $\Phi \colon \M_d(K[G]) \to \LCA(G;A)$, sending each matrix
$\alpha = (\alpha_{i j})_{1 \leq i,j \leq d} \in \M_d(K[G])$ to the unique linear cellular automaton
$\tau \in \LCA(G;A)$ such that
\[
\tau(x_i) = \sum_{1 \leq j \leq d}  \alpha_{j i}^*  x_j
\]
for all $1 \leq i \leq d$, is a $K$-algebra isomorphism. 
\par
The \emph{adjoint}\index{adjoint matrix} of a matrix $\alpha \in \M_d(K[G])$ is the matrix
$\alpha^* \in \M_d(K[G])$ given by $(\alpha^*)_{ij} \coloneqq \alpha_{ji}^* \in K[G]$  for all 
$1 \leq i,j \leq d$.
The involution $\alpha \mapsto \alpha^*$ makes $\M_d(K[G])$ into an involutive $K$-algebra.
Let us  transport this involution to $\LCA(G;A)$ via $\Phi$.
Thus, $\LCA(G;A)$ becomes an involutive $K$-algebra with involution $\tau \mapsto \tau^*$ satisfying
\[
\tau^*(x_i) = \sum_{1 \leq j \leq d}  \alpha_{i j}  x_j
\]
for all $1 \leq i \leq d$ and $\tau = \Phi(\alpha) \in \LCA(G;A)$.
Note that this involution on $\LCA(G;A)$ depends on the choice of a base for $A$. 
\par
Consider now the non-degenerate $K$-bilinear symmetric map $A \times A \to K$ defined by
\[
a \cdot b = \sum_{1 \leq i \leq d} a_ib_i \quad \text{ for all  } a = \sum_{1 \leq i \leq d} a_i e_i \text{ and }   b = \sum_{1 \leq i \leq d} b_i e_i, \text{
with } a_i,b_i \in K \text{ for } 1 \leq i \leq d.
\]
Then the $K$-bilinear map $A[G] \times A^G \to K$, defined by
\[
\langle x,y \rangle \coloneqq \sum_{g \in G} x(g)\cdot y(g)
\]
for all $x \in A[G]$ and $y \in A^G$, is non-degenerate in both arguments.
Given a linear cellular automaton $\tau \colon A^G \to A^G$,
Bartholdi \cite{bartholdi:2017} (see also \cite{tointon-harmonic})\index{Bartholdi, Laurent} 
observed that
\[
\langle \tau(x), y \rangle = \langle x, \tau^*(y) \rangle
\]
for all $x \in A[G]$ and $y \in A^G$, and used this to show that $\tau$ is pre-injective (resp.\ surjective) if and only if
$\tau^*$ is surjective (resp.\ pre-injective).
\par
In~\cite{csc-linear-goe} (see also \cite[Theorem~8.9.6]{book})\index{Garden of Eden theorem!linear ---}\index{theorem!linear Garden of Eden ---} a linear version of the Garden of Eden theorem is proved, namely that
if $G$ is amenable and $\tau \in \LCA(G;A)$, then $\tau$ is surjective if and only if it is pre-injective.
\par
Let $G$ be a nonamenable group. In~\cite[Theorem 1.1]{bartholdi:2016} Bartholdi\index{Bartholdi, Laurent}  showed that there exists a finite field $K$ (in~\cite{bartholdi:2017} he actually observed that the field $K$ can be arbitrary), 
a finite dimensional vector space $A$ over $K$, 
and a pre-injective linear cellular automaton $\tau \in \LCA(G;A)$ which is not surjective. 
As a consequence (\cite[Corollary]{bartholdi:2017}), the cellular automaton $\tau^* \in \LCA(G;A)$ is surjective but not pre-injective.
These two facts, in combination with the linear version of the Garden of Eden theorem in~\cite{csc-linear-goe}, yield a
characterization of group amenability in terms of linear cellular automata.
\par
The linear version of the Garden of Eden theorem has been extended in~\cite{csc-goe-linear-sub} to linear cellular automata $\tau \colon X \to X$ with $X \subset A^G$ a strongly irreducible linear subshift of finite type, and in~\cite{csc-goe-modules} to the case when the alphabet $A$ is a semi-simple left-module of finite length
over a (possibly noncommutative) ring.

\subsection{Algebraic cellular automata}
Let $G$ be a group. In \cite{cc-algebraic-ca} we introduced the class of algebraic cellular automata over $G$. Given a field $K$, let $A$ be an \emph{affine algebraic set}\index{affine algebraic set} over $K$. This means that $A \subset K^n$ for some integer $n \geq 1$ is the set of common zeroes of a family of polynomials in $n$ variables with coefficients in $K$. Then a cellular automaton $\tau \colon A^G \to A^G$ is called an \emph{algebraic
cellular automaton}\index{algebraic cellular automaton}\index{cellular automaton!algebraic ---} provided it admits a memory set $S \subset G$ and a local defining map $\mu \colon A^S \to A$ that is
\emph{regular},\index{regular map} i.e., it is the restriction of some polynomial map $(K^n)^S \to K^n$. This definition was generalized in \cite{cscp-alg-ca}
as follows.
\begin{definition}
\label{def:ca-over-schemes}
Let ${\mathcal S}$ be a scheme and let $X, Y$ be schemes based over ${\mathcal S}$. 
Denote by $A \coloneqq X(Y)$ the set of $Y$-points of $X$,
that is, the set consisting of all ${\mathcal S}$-scheme morphism $Y \to X$.
Then an \emph{algebraic cellular automaton over the group $G$ and the
${\mathcal S}$-scheme $X$ with coefficients in the ${\mathcal S}$-scheme $Y$}, briefly, an \emph{algebraic cellular automaton over
the group $G$ and the schemes ${\mathcal S},X,Y$},\index{algebraic cellular automaton!--- over a scheme}\index{cellular automaton!algebraic --- over a scheme} is a cellular automaton $\tau \colon A^G \to A^G$ over the group $G$ and the alphabet $A$ that admits a memory set $S \subset G$ and a local defining map $\mu \colon A^S \to A$ which is induced by some ${\mathcal S}$-scheme morphism $f \colon X^S \to X$, where $X^S$ denotes the ${\mathcal S}$-fibered product of a family of copies of
$X$ indexed by $S$.
\end{definition}
Note that Definition \ref{def:ca-over-schemes} generalizes that of an algebraic cellular automaton given in~\cite{cc-algebraic-ca}.
Indeed, if $K$ is a field and $A \subset K^n$ an algebraic set, there is an ${\mathcal S}$-scheme $X$ associated with $A$ for
${\mathcal S} = \Spec(K)$, namely  $X = \Spec(K[u_1,\dots,u_n]/I)$, where $I = I(A)$ is the ideal of $K[u_1,\dots,u_n]$ consisting of all polynomials that identically vanish on $A$.
One then has $A = X({\mathcal S})$ and the regular maps between two regular sets $A_1 \subset K^{n_1}$ and $A_2 \subset K^{n_2}$
are precisely those induced by the ${\mathcal S}$-morphisms between their corresponding ${\mathcal S}$-schemes $X_1$ and $X_2$, equivalently, the $K$-algebra morphisms from
$K[z_1,\dots,z_{n_2}]/I(A_2)$ to $K[t_1,\dots,t_{n_1}]/I(A_1)$. Thus, $\tau \colon A^G \to A^G$ is an algebraic cellular automaton,
as defined in~\cite{cc-algebraic-ca}, if and only if $\tau$ is a cellular automaton in the sense of Definition~\ref{def:ca-over-schemes}
over the schemes ${\mathcal S},X,Y$ for ${\mathcal S} = Y = \Spec(K)$ and $X$ is the ${\mathcal S}$-scheme associated with $A$.
Recall that an \emph{algebraic variety}\index{algebraic variety} over a field $K$ is a scheme of finite type over $K$.
In \cite[Theorem 1.1]{cscp-GOE-myhill} we showed the following Myhill type result for algebraic cellular automata:
\begin{theorem}
\label{t:main-theorem-1-cscp}
Let $G$ be an amenable group and let $X$ be an irreducible complete algebraic variety over an algebraically closed field $K$.
Let $A \coloneqq X(K)$ denote the set of $K$-points of $X$.
Then every pre-injective algebraic cellular automaton $\tau \colon A^G \to A^G$
over $(G,X,K)$ is surjective.
\end{theorem}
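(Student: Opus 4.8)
The plan is to prove the contrapositive: if $\tau$ is not surjective, then $\tau$ is not pre-injective. The guiding principle is that the Krull dimension of algebraic varieties plays here the role that the logarithm of the cardinality of the alphabet played in the proof of Theorem~\ref{t:GOE-amen}. Accordingly, the quantity
\[
D(Y) \coloneqq \limsup_{n \to \infty} \frac{\dim \overline{Y_{F_n}}}{|F_n|}
\]
(where $Y_{F_n} \subset A^{F_n} = X^{F_n}(K)$ and $\overline{\,\cdot\,}$ denotes Zariski closure) will serve as a surrogate for entropy, with $\log|A|$ replaced by $D(A^G) = \dim X$.

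First I would set up the algebraic framework. Since $\tau$ is an algebraic cellular automaton with memory set $S$, for each finite $\Omega \subset G$ the assignment $x|_{\Omega S} \mapsto \tau(x)|_\Omega$ is induced by a morphism of $K$-schemes $\Phi_\Omega \colon X^{\Omega S} \to X^{\Omega}$. Because $X$ is complete, each finite power $X^{\Omega S}$ is complete, so $\Phi_\Omega$ is proper and its image $\tau(A^G)_\Omega = \Phi_\Omega(X^{\Omega S}(K))$ is a \emph{closed} irreducible subvariety of $X^\Omega$ (irreducibility coming from that of $X$ together with algebraic closedness of $K$, which make every $X^\Omega$ irreducible). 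The same properness, via an inverse-limit (Mittag-Leffler) argument over the directed system of the compatible nonempty closed fibres $\Phi_\Omega^{-1}(y|_\Omega)$ — whose stabilized images under the transition maps are nonempty and surjective by Noetherianity — shows that $\tau(A^G)$ is closed in the prodiscrete topology, hence a genuine subshift.

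Next I would transcribe the two halves of the amenable proof into this dimensional language. For the non-surjectivity input: as $\tau(A^G)$ is a proper subshift, there is a finite $E$ with $\tau(A^G)_E \subsetneq X^E(K)$, and since this is a proper closed subvariety, $\dim \tau(A^G)_E \leq |E|\dim X - 1$. Fixing an $E$-tiling $T$ of $G$ and using that the dimension of (the closure of) a subset of a product is at most the sum of the dimensions of its projections, the tiling estimate exactly parallel to Proposition~\ref{p:inegalite-entropique-tiling} yields $D(\tau(A^G)) \leq \dim X - \alpha < \dim X$ for the tiling constant $\alpha > 0$. To produce a diamond I then fix a base point $a_0 \in A$ and, for large $n$, consider $Z_n \coloneqq \{x \in A^G : x(g) = a_0 \text{ for all } g \notin F_n\} \cong X^{F_n}(K)$, an irreducible variety of dimension $|F_n|\dim X$. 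As configurations in $Z_n$ are constant off $F_n$, $\tau(Z_n)$ is determined by its values on $F_n S^{-1}$, whence $\dim \tau(Z_n) \leq \dim \tau(A^G)_{F_n S^{-1}}$; combining $D(\tau(A^G)) < \dim X$ with the Følner estimate of Lemma~\ref{l:folner-riht-fset} (to pass from $F_n S^{-1}$ back to $F_n$) gives, for some $n_0$, the strict inequality $\dim \tau(Z_{n_0}) < \dim Z_{n_0}$.

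The decisive final step — and the place where the algebraic hypotheses are truly indispensable — is an \emph{algebraic pigeon-hole principle} replacing the counting argument of Theorem~\ref{t:GOE-amen}. The restriction $\tau|_{Z_{n_0}}$ is a dominant morphism from the irreducible variety $Z_{n_0} \cong X^{F_{n_0}}$ onto its closed image $\tau(Z_{n_0})$, with strictly smaller target dimension. By the fibre-dimension theorem every fibre then has dimension at least $\dim Z_{n_0} - \dim \tau(Z_{n_0}) \geq 1$, and a positive-dimensional variety over the algebraically closed field $K$ carries infinitely many $K$-points; in particular some fibre contains two distinct $K$-points $x_1 \neq x_2$. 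These are distinct configurations in $Z_{n_0}$, hence almost equal (they agree off $F_{n_0}$), with $\tau(x_1) = \tau(x_2)$ — a diamond. Thus $\tau$ is not pre-injective, completing the contrapositive. I expect the main obstacle to lie in the completeness-driven closedness claims of the second step: verifying that $\tau(A^G)$ is prodiscrete-closed demands the careful inverse-limit argument over nonempty closed subvarieties sketched above, and it is precisely here, together with the fibre-dimension input, that properness of $X$, irreducibility of $X$, and algebraic closedness of $K$ are all used at once.
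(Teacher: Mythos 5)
Your proposal is correct and follows essentially the same route as the proof the paper points to in \cite{cscp-GOE-myhill}: you replace entropy by the algebraic mean dimension (the averaged Krull dimension of the Zariski closures of the projections along the F\o lner sequence), derive a dimension deficit for the image of a non-surjective automaton from the tiling argument together with the closed-image property supplied by completeness of $X$, and substitute the fibre-dimension theorem for the pigeon-hole principle to produce a diamond. The technical points you single out (prodiscrete closedness of $\tau(A^G)$ via a Noetherian inverse-limit argument, irreducibility of $X^\Omega$, and positive-dimensional fibres having infinitely many $K$-points) are exactly the ones the cited proof has to address, and your treatment of them is sound.
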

Let us note that the converse implication, i.e., the analogue of the Moore implication, does not hold under the hypotheses of
Theorem~\ref{t:main-theorem-1-cscp}, even with the additional hypothesis that the variety $X$ is complete. 
For example, if $K$ is an algebraically closed field whose characteristic is not equal to $2$,
the projective line $\Proj_K^1$ is an irreducible complete $K$-algebraic variety
and the morphism $f \colon \Proj_K^1 \to \Proj_K^1$ given by $(x:y) \mapsto (x^2:y^2)$ is surjective but not injective.
Taking $A \coloneqq \Proj_K^1(K)$,  we deduce that, for any group $G$, the map $\tau \colon A^G \to A^G$ defined by
$(\tau(c))(g) \coloneqq  f(c(g))$ for all $c \in A^G$ and $g \in G$, is an algebraic cellular automaton over $(G,X,K)$
that is surjective but not pre-injective.
\par
In order to formulate a version of the Garden of Eden theorem for algebraic cellular automata, the following weak notion of pre-injectivity
was introduced in \cite[Definition 6.1]{cscp-GOE-myhill}:
\begin{definition}
\label{def:*}\index{algebraic cellular automaton!$(*)$-pre-injective ---}\index{cellular automaton!$(*)$-pre-injective algebraic ---}
Let $G$ be a group and let $X$ be an algebraic variety over an algebraically closed field $K$.
Let $A\coloneqq X(K)$ and let $\tau \colon A^G \to A^G$ be an algebraic cellular automaton over $(G,X,K)$.
\par
We say that $\tau$ is \emph{$(*)$-pre-injective} if there do not exist a finite subset $\Omega \subset G$
and a proper subset $H \subset A^\Omega$ that is closed for the Zariski topology such that
\[
\tau((A^\Omega)_p)=\tau(H_p) \quad \text{ for all } p \in A^{G\setminus \Omega}
\]
where $H_p \coloneqq \{x \in A^G: x\vert_\Omega \in H \mbox{ and } x\vert_{G \setminus \Omega} = p\}$ for $p \in A^{G\setminus \Omega}$ and any subset $H \subset A^\Omega$.
\end{definition}
It turns out that Theorem~\ref{t:main-theorem-1-cscp} remains valid if we replace the hypothesis that $\tau$ is pre-injective by the weaker hypothesis that $\tau$ is $(*)$-pre-injective. Moreover, this weak form of pre-injectivity also allows us to establish a version of the Moore implication for algebraic cellular automata. Altogether we obtained the following version of the Garden of Eden theorem
(cf.\ \cite[Theorem 1.4]{cscp-GOE-myhill}) for algebraic cellular automata:
\begin{theorem}
\label{t:main-theorem-2-cscp}\index{Garden of Eden theorem!--- for algebraic cellular automata}\index{theorem!Garden of Eden --- for algebraic cellular automata}
Let $G$ be an amenable group and let $X$ be an irreducible complete algebraic variety over an algebraically closed field $K$.
Let $A \coloneqq X(K)$ denote the set of $K$-points of $X$ and let $\tau \colon A^G \to A^G$ be an algebraic cellular automaton
over $(G,X,K)$.
Then the following conditions are equivalent:
\begin{enumerate}[\rm(a)]
\item
$\tau$ is surjective;
\item
$\tau$ is $(*)$-pre-injective.
\end{enumerate}
\end{theorem}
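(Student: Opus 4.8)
The plan is to run the mean-dimension analogue of the proof of Theorem~\ref{t:GOE-amen}, replacing the entropy $h_\FF$ by a mean dimension and cardinalities of pattern sets by dimensions of Zariski closures. Concretely, for $Y \subset A^G$ and a F\o lner sequence $\FF = (F_n)_{n \in \N}$ I would set $\mdim_\FF(Y) \coloneqq \limsup_{n} \dim \overline{Y_{F_n}}/|F_n|$, where $\overline{Y_{F_n}}$ is the Zariski closure of the projection of $Y$ inside $A^{F_n} = X^{F_n}(K)$ and $\dim$ denotes Krull dimension. Since $X$ is irreducible and $K$ is algebraically closed, $X^{F_n}$ is irreducible of dimension $|F_n| \dim X$, so $\mdim_\FF(Y) \le \dim X$, with equality when $Y = A^G$; in particular $\tau$ surjective forces $\mdim_\FF(\tau(A^G)) = \dim X$. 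I would first record two facts mirroring the finite-alphabet theory: a cellular automaton does not increase mean dimension (because $\tau(z)|_{F_n}$ is a regular image of $z|_{F_n S^{-1}}$, so $\dim \overline{\tau(Z)_{F_n}} \le \dim \overline{Z_{F_n S^{-1}}} \le \dim \overline{Z_{F_n}} + |F_n S^{-1} \setminus F_n| \dim X$, the boundary term dying by Lemma~\ref{l:folner-riht-fset}), and a tiling estimate in the spirit of Proposition~\ref{p:inegalite-entropique-tiling}: if $E \subset G$ is finite nonempty and $T$ is an $E$-tiling with $\overline{Y_{tE}} \subsetneq A^{tE}$ for all $t \in T$, then $\mdim_\FF(Y) < \dim X$. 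The latter holds because a proper closed subset of the irreducible variety $X^{E}$ has dimension at most $|E| \dim X - 1$, and summing over the tiles contained in $F_n$ gives $\dim \overline{Y_{F_n}} \le |F_n| \dim X - |T_n|$ with $|T_n| \ge \alpha |F_n|$.

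With this machinery in place I would prove the equivalence. For (b) $\implies$ (a) I invoke the $(*)$-strengthened form of Theorem~\ref{t:main-theorem-1-cscp} recorded above, namely that $(*)$-pre-injectivity already suffices for the Myhill conclusion. For the new implication (a) $\implies$ (b) I argue the contrapositive. Suppose $\tau$ is not $(*)$-pre-injective, witnessed by a finite $\Omega \subset G$ and a proper Zariski-closed $H \subsetneq A^\Omega$ with $\tau((A^\Omega)_p) = \tau(H_p)$ for every $p \in A^{G \setminus \Omega}$. Fixing a memory set $S$ with $1_G \in S$, I set $E \coloneqq \Omega S^{-1} S \supseteq \Omega$, choose an $E$-tiling $T$, and define $Z \coloneqq \{z \in A^G : z|_{t\Omega} \in tH \text{ for all } t \in T\}$. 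By construction $\overline{Z_{tE}}$ lies in the proper closed cylinder cut out by $z|_{t\Omega} \in tH$, so the tiling estimate yields $\mdim_\FF(Z) < \dim X$ and hence $\mdim_\FF(\tau(Z)) < \dim X$.

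The crux is then to show $\tau(Z) = \tau(A^G)$, which forces $\mdim_\FF(\tau(A^G)) < \dim X$ and thus, comparing with the value $\dim X$ attained on a surjective map, the non-surjectivity of $\tau$. Given $x \in A^G$, I would correct it one tile at a time: equivariance turns the $(*)$ relation into $\tau((A^{t\Omega})_q) = \tau((tH)_q)$ for all $q \in A^{G \setminus t\Omega}$, so for each tile $t$ there is $y_t$ agreeing with $x$ off $t\Omega$, with $y_t|_{t\Omega} \in tH$ and $\tau(y_t) = \tau(x)$. Defining $z$ by $z|_{t\Omega} \coloneqq y_t|_{t\Omega}$ on each tile and $z = x$ elsewhere puts $z$ in $Z$, and the choice $E = \Omega S^{-1} S$ guarantees, via pairwise disjointness of the sets $tE$, that every translate $gS$ meets at most one $t\Omega$; on such $g$ one has $z|_{gS} = y_t|_{gS}$ and therefore $\tau(z)(g) = \tau(y_t)(g) = \tau(x)(g)$, exactly as in the proof of Proposition~\ref{p:MEP-preinjectivity}. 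Hence $\tau(z) = \tau(x)$, proving $\tau(Z) = \tau(A^G)$.

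The main obstacle is not this Moore direction but the Myhill direction being invoked: the reason $(*)$-pre-injectivity, rather than genuine pre-injectivity, is strong enough is that, when $\mdim_\FF(\tau(A^G)) < \dim X$, one must manufacture a \emph{single} proper closed $H \subsetneq A^\Omega$ erasing uniformly in the outside data $p$. Since $\tau(x)|_{\Omega S^{-1}}$ depends on $x$ only through $x|_\Omega$ and finitely many boundary coordinates, this reduces to a family of morphisms $A^\Omega \times A^W \to A^{\Omega S^{-1}}$ parametrized by $w \in A^W$ whose fibers are positive-dimensional; extracting a uniform $H$ requires constructibility and upper semicontinuity of fiber dimension, together with the completeness of $X$ to keep all relevant images Zariski-closed. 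This generic-fiber argument, which is precisely what the $(*)$ formulation is designed to accommodate, is where the real work sits, and it is the step I would expect to be most delicate.
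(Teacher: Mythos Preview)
Your proposal is correct and follows the approach the paper itself indicates: the survey does not give a proof of Theorem~\ref{t:main-theorem-2-cscp} but explicitly names the \emph{algebraic mean dimension} $\mdim_\FF$ (averaged Krull dimension of projections along a F\o lner net) as the main ingredient, and your sketch is precisely the mean-dimension transcription of the entropy proof of Theorem~\ref{t:GOE-amen}. Your Moore-direction argument (not $(*)$-pre-injective $\Rightarrow$ not surjective) is complete and accurate, including the tile-by-tile correction and the verification via $E = \Omega S^{-1} S$ that each $gS$ touches at most one $t\Omega$; and you correctly identify that the genuine work lies in the Myhill direction, where completeness of $X$ is used (closedness of images, upper semicontinuity of fibre dimension) to pass from a mean-dimension drop to a single Zariski-closed $H$ witnessing failure of $(*)$-pre-injectivity.
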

One of the main ingredients in the proof of the above results is the notion of \emph{algebraic mean dimension}.\index{algebraic mean dimension}
If $G$ is an amenable group equipped with a F\o lner net $\FF$ and $A$ is the set of $K$-points of an algebraic variety $X$ over an algebraically closed field $K$, given a subset $\Gamma \subset A^G$ then the algebraic mean dimension
$\mdim_\FF(\Gamma)$ of $\Gamma$ is defined as a limit of the average \emph{Krull dimension}\index{Krull dimension} 
of the projection of $\Gamma$ along the F\o lner net.
The definition of algebraic mean dimension is analogous to that of topological entropy (cf.\ \eqref{e:entropy}).

\subsection{Gromov's Garden of Eden theorem}\index{Gromov's Garden of Eden theorem}\index{Garden of Eden theorem!Gromov's ---}\index{theorem!Gromov's Garden of Eden ---}
In \cite[Subsection~8.F']{gromov-esav}, Gromov proved a Garden of Eden type theorem generalizing Theorem~\ref{t:GOE-amen} under several
aspects. First of all, the alphabet set $A$ is only assumed to be countable, not necessarily finite.  
In addition, the universe
is (the vertex set $V$ of) a connected simplicial graph $\GG = (V,E)$ of bounded degree with a
natural homogeneity condition (to admit a \emph{dense pseudogroup of partial isometries}).
The classical case corresponds to $\GG = \CC(G,S)$ being the Cayley graph of a finitely generated group $G$ with respect to a finite and
symmetric generating subset $S \subset G \setminus \{1_G\}$. The dense pseudogroup of partial isometries is, in this particular case, given by partial left-multiplication by group elements.
In this more general setting, the category corresponding to that of cellular automata consists now of the following:
\begin{itemize}
\item \emph{stable spaces},\index{stable space} i.e.\ (stable) projective limits of locally-finite projective systems $(X_\Omega)$ of $A$-valued maps on  (subsets of) $V$ with a
suitable finiteness and irreducibility condition (\emph{bounded propagation})\index{stable space!--- of bounded propagation}\index{bounded propagation!--- of a stable space} and admitting a \emph{dense holonomy}\index{stable space!--- of dense holonomy}\index{dense holonomy!--- of a stable space} (corresponding to shift-invariance in the classical case) as objects, and
\item \emph{maps of bounded propagation}\index{map of bounded propagation}\index{bounded propagation!map of ---}
(this condition corresponds to continuity) admitting a \emph{dense holonomy}\index{map of bounded propagation!--- admitting a dense holonomy}\index{dense holonomy!map of bounded propagation admitting a ---} (this corresponds to $G$-equivariance), as morphisms.
\end{itemize}
The notions of a F\o lner sequence and of amenability for simplicial graphs, together with the corresponding notion of entropy (for the above-mentioned spaces of $A$-valued maps), carry verbatim from the group theoretical framework.
All this said, Gromov's theorem states the following.\index{Gromov's Garden of Eden theorem}\index{Garden of Eden theorem!Gromov's ---}\index{theorem!Gromov's Garden of Eden ---}

Let $\GG = (V.E)$ be an amenable simplicial connected graph of bounded degree admitting a dense pseudogroup of partial isometries and let $A$ be a finite or
countably infinite alphabet set.  Suppose that $X,Y \subset A^V$ are stable spaces of bounded propagation with the same entropy.
Let $\tau \colon X \to Y$ be a a map of bounded propagation admitting a dense holonomy.
Then $\tau$ is surjective if and only if it is pre-injective.

In \cite[Lemma~3.11]{CFS-goe} it is shown that a stable space of bounded propagation is strongly irreducible (cf.\ Section \ref{sec:goe-subshifts}) and of finite type (cf.\ Section~\ref{sec:configur-shifts}).
However, as shown in \cite[Counterexample~3.13]{CFS-goe}, the converse fails to hold: strong irreducibility and finite type conditions do not
imply, in general, bounded propagation.
As a consequence, the following theorem (cf.\ \cite[Theorem~B]{CFS-goe}) improves on 
Gromov's theorem.

Let $\GG = (V.E)$ be an amenable simplicial connected graph of bounded degree admitting a dense pseudogroup of partial isometries and let $A$ be a finite or countably infinite alphabet set. Suppose that $X,Y \subset A^V$ are strongly irreducible stable spaces of finite type with the same entropy.
Let $\tau \colon X \to Y$ be a a map of bounded propagation admitting a dense holonomy.
Then $\tau$ is surjective if and only if it is pre-injective.

Note that this last result also covers Theorem \ref{t:fiorenzi-strongly}.

\subsection{Cellular automata over homogeneous sets}
Cellular automata where the universe is a set endowed with a transitive group action have been investigated by Moriceau~\cite{moriceau}.
Versions  of the Curtis-Hedlund-Lyndon theorem and of the Garden of Eden theorem in this more general setting have been obtained by
Wacker~\cite{wacker-curtis},~\cite{wacker-goe}. \\

\noindent
{\bf Acknowledgements.} We thank Laurent Bartholdi for valuable comments and remarks.

\bibliographystyle{siam}

\begin{thebibliography}{10}

\bibitem{bartholdi:2017}
{\sc L.~Bartholdi}, {\em Cellular automata, duality, and sofic groups},
New York J. Math. 23 (2017) pp.~1417--1425.

\bibitem{bartholdi}
{\sc L.~Bartholdi}, {\em Gardens of {E}den and amenability on cellular
  automata}, J. Eur. Math. Soc. (JEMS), 12 (2010), pp.~241--248.

\bibitem{bartholdi:2016}
{\sc L.~Bartholdi and D.~Kielak}, {\em Amenability of groups is characterized
  by {M}yhill's theorem}, arXiv:1605.09133.

\bibitem{bhattacharya}
{\sc S.~Bhattacharya}, {\em Orbit equivalence and topological conjugacy of
  affine actions on compact Abelian groups}, Monatsh. Math.\ 129 (2000),
  pp.~89--96.
	
\bibitem{Bowen-Li}
{\sc L.~Bowen and H.~Li}, {\em Harmonic models and spanning forsets of residually finite groups}, J. Funct. Anal.
263 (2012), pp.~1769--1808.	

\bibitem{bowen-markov-minimal_AJM-1970}
{\sc R.~Bowen}, {\em Markov partitions and minimal sets for {A}xiom {${\rm A}$}
  diffeomorphisms}, Amer. J. Math., 92 (1970), pp.~907--918.

\bibitem{bowen-markov-1970}
\leavevmode\vrule height 2pt depth -1.6pt width 23pt, {\em Markov partitions
  for {A}xiom {${\rm A}$} diffeomorphisms}, Amer. J. Math., 92 (1970),
  pp.~725--747.
	
\bibitem{bowen-periodic-1971}
{\sc R.~Bowen}, {\em  Periodic points and measures
  for {A}xiom {${\rm A}$} diffeomorphisms}, Trans. Amer. Math. Soc., 154 (1971),
  pp.~377--397.	

\bibitem{brin-stuck}
{\sc M.~Brin and G.~Stuck}, {\em Introduction to dynamical systems}, Cambridge
  University Press, Cambridge, 2002.

\bibitem{csc-linear-goe}
{\sc T.~Ceccherini-Silberstein and M.~Coornaert},
{\em The {G}arden of
  {E}den theorem for linear cellular automata}, Ergodic Theory Dynam. Systems,
  26 (2006), pp.~53--68.

\bibitem{csc-goe-modules}
\leavevmode\vrule height 2pt depth -1.6pt width 23pt, {\em Amenability and
  linear cellular automata over semisimple modules of finite length}, Comm.
  Algebra, 36 (2008), pp.~1320--1335.

\bibitem{csc-curtis-hedlund}
\leavevmode\vrule height 2pt depth -1.6pt width 23pt, {\em A generalization of
  the {C}urtis-{H}edlund theorem}, Theoret. Comput. Sci., 400 (2008),
  pp.~225--229.

\bibitem{induction}
\leavevmode\vrule height 2pt depth -1.6pt width 23pt, {\em Induction and
  restriction of cellular automata}, Ergodic Theory Dynam. Systems, 29 (2009),
  pp.~371--380.

\bibitem{book}
\leavevmode\vrule height 2pt depth -1.6pt width 23pt, {\em Cellular automata
  and groups}, Springer Monographs in Mathematics, Springer-Verlag, Berlin,
  2010.
	
	\bibitem{cc-algebraic-ca}
\leavevmode\vrule height 2pt depth -1.6pt width 23pt, {\em On algebraic
  cellular automata}, J. Lond. Math. Soc. (2), 84 (2011), pp.~541--558.

\bibitem{csc-goe-linear-sub}
\leavevmode\vrule height 2pt depth -1.6pt width 23pt, {\em A {G}arden of {E}den
  theorem for linear subshifts}, Ergodic Theory Dynam. Systems, 32 (2012),
  pp.~81--102.

\bibitem{csc-myhill-monatsh}
\leavevmode\vrule height 2pt depth -1.6pt width 23pt, {\em The {M}yhill
  property for strongly irreducible subshifts over amenable groups}, Monatsh.
  Math., 165 (2012), pp.~155--172.

\bibitem{csc-cat}
\leavevmode\vrule height 2pt depth -1.6pt width 23pt, {\em Surjunctivity and
  reversibility of cellular automata over concrete categories}, in Trends in
  harmonic analysis, vol.~3 of Springer INdAM Ser., Springer, Milan, 2013,
  pp.~91--133.

\bibitem{csc-ijm-expansive}
\leavevmode\vrule height 2pt depth -1.6pt width 23pt, {\em Expansive actions of
  countable amenable groups, homoclinic pairs, and the {M}yhill property},
  Illinois J. Math., 59 (2015), pp.~597--621.

\bibitem{csc-goe-anosov}
\leavevmode\vrule height 2pt depth -1.6pt width 23pt, {\em A {g}arden of {E}den
  theorem for {A}nosov diffeomorphisms on tori}, Topology Appl., 212 (2016),
  pp.~49--56.

\bibitem{csc-goe-principal}
\leavevmode\vrule height 2pt depth -1.6pt width 23pt, 
 {\em A {G}arden of {E}den
  theorem for principal algebraic actions}, preprint 2017. arXiv:1706.06548.
	
\bibitem{cscl-goe-homoclinically}
{\sc T.~Ceccherini-Silberstein, M.~Coornaert, and H.~Li},
{\em Homoclinically expansive actions and a Garden of Eden theorem for harmonic models},
preprint 2018. arXiv: 1803.03541.


\bibitem{cscp-alg-ca}
{\sc T.~Ceccherini-Silberstein, M.~Coornaert, and X.~K. Phung}, 
{\em On injective cellular automata over schemes}, preprint 2017. arXiv:1712.05716.
	
\bibitem{cscp-GOE-myhill}
\leavevmode\vrule height 2pt depth -1.6pt width 23pt,
{\it On the Garden of Eden theorem for endomorphisms of symbolic algebraic varieties}, preprint 2018. arXiv:1803.08906.
	
\bibitem{CFS-goe}
{\sc T.~Ceccherini-Silberstein, F.~Fiorenzi, and F.~Scarabotti}, {\em The
  {G}arden of {E}den theorem for cellular automata and for symbolic dynamical
  systems}, in Random walks and geometry, Walter de Gruyter, Berlin, 2004,
  pp.~73--108.

\bibitem{ceccherini}
{\sc T.~Ceccherini-Silberstein, A.~Mach{\`{\i}}, and F.~Scarabotti}, {\em
  Amenable groups and cellular automata}, Ann. Inst. Fourier (Grenoble), 49
  (1999), pp.~673--685.
	
\bibitem{Chou}
{\sc C.~Chou}, {\em Elementary amenable groups}, Ill.\ J.\ Math. 24(3)
  (1980), pp.~396--407.	
	
\bibitem{chung-li-2015}
{\sc N.~Chung and H.~Li}, {\em Homoclinic groups, IE groups, and expansive algebraic actions}, Invent. Math., 199 (2015), no.3,
  pp.~805-858.

\bibitem{coo-book}
{\sc M.~Coornaert}, {\em Topological dimension and dynamical systems},
  Universitext, Springer, Cham, 2015.
\newblock Translated and revised from the 2005 French original.

\bibitem{day-ijm-1957}
{\sc M.~M. Day}, {\em Amenable semigroups}, Illinois J. Math., 1 (1957),
  pp.~509--544.

\bibitem{de-la-harpe}
{\sc P.~de~la Harpe}, {\em Topics in geometric group theory}, Chicago Lectures
  in Mathematics, University of Chicago Press, Chicago, IL, 2000.
	
\bibitem{deninger-schmidt}
{\sc C.~Deninger and K.~Schmidt}, {\em Expansive algebraic actions of discrete
  residually finite amenable groups and their entropy}, Ergodic Theory Dynam.
  Systems 27 (2007), pp.~769--786.	
	
\bibitem{denker-grillenberger-sigmund}
{\sc  M.~Denker, Ch.~Grillenberger, and K.~Sigmund}, Ergodic theory on compact spaces. Lecture Notes in Mathematics, Vol. 527. 
Springer-Verlag, Berlin-New York, 1976. 

\bibitem{fiorenzi-sofic}
{\sc F.~Fiorenzi}, {\em The {G}arden of {E}den theorem for sofic shifts}, Pure
  Math. Appl., 11 (2000), pp.~471--484.

\bibitem{fiorenzi-strongly}
\leavevmode\vrule height 2pt depth -1.6pt width 23pt, {\em Cellular automata
  and strongly irreducible shifts of finite type}, Theoret. Comput. Sci., 299
  (2003), pp.~477--493.

\bibitem{franks}
{\sc J.~Franks}, {\em Anosov diffeomorphisms}, in Global {A}nalysis ({P}roc.
  {S}ympos. {P}ure {M}ath., {V}ol. {XIV}, {B}erkeley, {C}alif., 1968), Amer.
  Math. Soc., Providence, R.I., 1970, pp.~61--93.

\bibitem{greenleaf}
{\sc F.~P. Greenleaf}, {\em Invariant means on topological groups and their
  applications}, Van Nostrand Mathematical Studies, No. 16, Van Nostrand
  Reinhold Co., New York, 1969.

\bibitem{grigorchuk-1984}
{\sc R.~I. Grigorchuk}, {\em Degrees of growth of finitely generated groups and
  the theory of invariant means}, Izv. Akad. Nauk SSSR Ser. Mat., 48 (1984),
  pp.~939--985.

\bibitem{gromov-polynomial-growth}
{\sc M.~Gromov}, {\em Groups of polynomial growth and expanding maps}, Inst.
  Hautes \'Etudes Sci. Publ. Math.,  (1981), pp.~53--73.

\bibitem{gromov-esav}
\leavevmode\vrule height 2pt depth -1.6pt width 23pt, {\em Endomorphisms of
  symbolic algebraic varieties}, J. Eur. Math. Soc. (JEMS), 1 (1999),
  pp.~109--197.

\bibitem{hedlund}
{\sc G.~A. Hedlund}, {\em Endomorphisms and automorphisms of the shift
  dynamical system}, Math. Systems Theory, 3 (1969), pp.~320--375.
	

	\bibitem{li-2017}
{\sc H.~Li}, {\em Garden of Eden and specification}, preprint 2017. arXiv:1708.09012.
	

\bibitem{lind-marcus}
{\sc D.~Lind and B.~Marcus}, {\em An introduction to symbolic dynamics and
  coding}, Cambridge University Press, Cambridge, 1995.

\bibitem{lind-schmidt}
{\sc D.~Lind and K.~Schmidt}, {\em Homoclinic points of algebraic {${\bf
  Z}^d$}-actions}, J. Amer. Math. Soc., 12 (1999), pp.~953--980.

\bibitem{lind-schmidt-handbook}
\leavevmode\vrule height 2pt depth -1.6pt width 23pt, {\em Symbolic and
  algebraic dynamical systems}, in Handbook of dynamical systems, {V}ol.\ 1{A},
  North-Holland, Amsterdam, 2002, pp.~765--812.

\bibitem{lind-schmidt-survey-heisenberg}
{\sc D.~Lind and K.~Shmidt}, {\em A survey of algebraic actions of the discrete
  {H}eisenberg group}, Uspekhi Mat. Nauk, 70 (2015), pp.~77--142.
	
\bibitem{lind-schmidt-verbitskiy}
{\sc D.~Lind, K.~Schmidt and E.~Verbitskiy}, {\em Entropy and growth rate of periodic points of algebraic {${\bf
  Z}^d$}-actions}. In: Dynamical numbers: interplay between dynamical systems and number theory,
	ed. S. Kolyada, Yu. Manin, M. M\"{o}ller, P. Moree and T. Ward,
	pp.~195--211, Contemporary Mathematics,
	vol.\ 523. American Mathematical Society, Providence (RI) (2010).	
	
\bibitem{lind-schmidt-verbitskiy-2}
{\sc D.~Lind, K.~Schmidt, and E.~Verbitskiy}, {\em Homoclinic points, atoral polynomials, and periodic points of algebraic $\Z^d$-actions},
Ergodic Theory Dynam.\ Systems 33 (2013), pp.~1060--1081.	

\bibitem{machi-mignosi}
{\sc A.~Mach{\`{\i}} and F.~Mignosi}, {\em Garden of {E}den configurations for
  cellular automata on {C}ayley graphs of groups}, SIAM J. Discrete Math., 6
  (1993), pp.~44--56.

\bibitem{manning}
{\sc A.~Manning}, {\em There are no new {A}nosov diffeomorphisms on tori},
  Amer. J. Math., 96 (1974), pp.~422--429.

\bibitem{milnor-growth}
{\sc J.~Milnor}, {\em A note on curvature and fundamental group}, J.
  Differential Geometry, 2 (1968), pp.~1--7.

\bibitem{moore}
{\sc E.~F. Moore}, {\em Machine models of self-reproduction}, vol.~14 of Proc.
  Symp. Appl. Math., American Mathematical Society, Providence, 1963,
  pp.~17--34.

\bibitem{moriceau}
{\sc S.~Moriceau}, {\em Cellular automata on a {$G$}-set}, J. Cell. Autom., 6
  (2011), pp.~461--486.

\bibitem{morris}
{\sc S.~A. Morris}, {\em Pontryagin duality and the structure of locally
  compact abelian groups}, Cambridge University Press, Cambridge-New
  York-Melbourne, 1977.
\newblock London Mathematical Society Lecture Note Series, No. 29.

\bibitem{muller}
{\sc D.~E. Muller}, {\em Unpublished class notes, university of illinois},
  (1976).

\bibitem{myhill}
{\sc J.~Myhill}, {\em The converse of {M}oore's {G}arden-of-{E}den theorem},
  Proc. Amer. Math. Soc., 14 (1963), pp.~685--686.
	
\bibitem{von-neumann}
{\sc J.~von {N}eumann}, {\em Zur allgemeinen theorie des {M}asses}, Fund.
  {M}ath., 13 (1929), pp.~73--116.
	

\bibitem{olshanskii-monster}
{\sc A.~J. Ol{\cprime}{\v{s}}anski{\u\i}}, {\em On the question of the
  existence of an invariant mean on a group}, Uspekhi Mat. Nauk, 35 (1980),
  pp.~199--200.

\bibitem{ornstein-weiss}
{\sc D.~S. Ornstein and B.~Weiss}, {\em Entropy and isomorphism theorems for
  actions of amenable groups}, J. Analyse Math., 48 (1987), pp.~1--141.

\bibitem{paterson}
{\sc A.~L.~T. Paterson}, {\em Amenability}, vol.~29 of Mathematical Surveys and
  Monographs, American Mathematical Society, Providence, RI, 1988.
	
\bibitem{ren-2015}
{\sc X.~Ren}, {\em Periodic measures are dense in invariant measures for residually finite amenable group
actions with specification}, preprint 2015.  arXiv:1509.09202.
		
		
\bibitem{ruelle-statistical-1973}
{\sc D.~Ruelle}, {\em  Statistical mechanics on a compact set with $Z^\nu$ action satisfying expansiveness and specification}, 
Trans. Amer. Math. Soc., 187 (1971), pp.~237--251.
	

\bibitem{schmidt-book}
{\sc K.~Schmidt}, {\em Dynamical systems of algebraic origin}, vol.~128 of
  Progress in Mathematics, Birkh\"auser Verlag, Basel, 1995.
	
\bibitem{schmidt-verbitskiy}
{\sc K.~Schmidt and E.~Verbitskiy}, {\em Abelian sandpiles and the harmonic model},
Comm. Math. Phys. 292 (2009), pp.~721--759.	

\bibitem{schupp-arrays}
{\sc P.~E. Schupp}, {\em Arrays, automata and groups---some interconnections},
  in Automata networks ({A}rgel\`es-{V}illage, 1986), vol.~316 of Lecture Notes
  in Comput. Sci., Springer, Berlin, 1988, pp.~19--28.

\bibitem{shub-global-stability}
{\sc M.~Shub}, {\em Global stability of dynamical systems}, Springer-Verlag,
  New York, 1987.
\newblock With the collaboration of A. Fathi and R. Langevin, Translated from
  the French by Joseph Christy.

\bibitem{smale-bams}
{\sc S.~Smale}, {\em Differentiable dynamical systems}, Bull. Amer. Math. Soc.,
  73 (1967), pp.~747--817.

\bibitem{tointon-harmonic}
{\sc M.~C.~H.~Tointon}, {\em Characterizations of algebraic properties of
  groups in terms of harmonic functions}, Groups Geom. Dyn., 10 (2016),
  pp.~1007--1049.

\bibitem{wacker-curtis}
{\sc S.~Wacker}, {\em Cellular automata on group sets and the uniform
  {C}urtis-{H}edlund-{L}yndon theorem}, in Cellular automata and discrete
  complex systems, vol.~9664 of Lecture Notes in Comput. Sci., Springer,
  [Cham], 2016, pp.~185--198.

\bibitem{wacker-goe}
\leavevmode\vrule height 2pt depth -1.6pt width 23pt, {\em The {G}arden of
  {E}den theorem for cellular automata on group sets}, in Cellular automata,
  vol.~9863 of Lecture Notes in Comput. Sci., Springer, [Cham], 2016,
  pp.~66--78.

\bibitem{walters}
{\sc P.~Walters}, {\em Topological conjugacy of affine transformations of
  tori}, Trans. Amer. Math. Soc., 131 (1968), pp.~40--50.

\bibitem{wolfram-new-kind}
{\sc S.~Wolfram}, {\em A new kind of science}, Wolfram Media, Inc., 
Champaign, IL, 2002.

\end{thebibliography}
\def\cprime{$'$}

\printindex
\end{document}